\newcommand{\CA}{\mathcal{A}}
\newcommand{\CCC}{\mathcal{C}}
\newcommand{\CF}{\mathcal{F}}
\newcommand{\CCG}{\mathcal{G}}
\newcommand{\CJ}{\mathcal{J}}
\newcommand{\CL}{\mathcal{L}}
\newcommand{\CK}{\mathcal{K}}
\newcommand{\CO}{\mathcal{O}}
\newcommand{\CP}{\mathcal{P}}
\newcommand{\CT}{\mathcal{T}}
\newcommand{\SBim}{\mathrm{SBim}}
\newcommand{\FHilb}{\mathrm{FHilb}}
\newcommand{\Proj}{\mathrm{Proj}}
\newcommand{\HY}{\mathrm{HY}}
\newcommand{\FT}{\mathrm{FT}}
\newcommand{\Aug}{\mathrm{Aug}}
\author[1]{Eugene Gorsky}
\author[2]{Oscar Kivinen}
\author[3]{Jos\'e Simental}
\affil[1]{Department of Mathematics, UC Davis \thanks{egorskiy@math.ucdavis.edu}}
\affil[2]{Department of Mathematics, University of Toronto \thanks{oscar.kivinen@gmail.com}}
\affil[3]{Max Planck Institute for Mathematics, Bonn \thanks{jose@mpim-bonn.mpg.de}}
\begin{document}
\maketitle

\tableofcontents

\section{Introduction}

These notes cover the lectures of the first named author at 2021 IHES Summer School on ``Enumerative Geometry, Physics and Representation Theory'' with additional details and references. They cover the definition of Khovanov-Rozansky triply graded homology, its basic properties and recent advances, as well as three algebro-geometric models for link homology: braid varieties, Hilbert schemes on singular curves and affine Springer fibers, and Hilbert schemes of points on the plane. 

The relations between these models are very subtle and partially conjectural, and yet to be fully understood. Braid varieties can be defined for arbitrary positive braids, and their homology (with the weight filtration) is most clearly related to link homology. By the work of Mellit \cite{Mellit2} they can be used as building blocks for character varieties over punctured surfaces. On the other hand, braid varieties generalize open Richardson and positroid varieties \cite{CGGS2,GL,GL2,GL3} which are important for the study of cluster algebras.

Hilbert schemes on singular curves, compactified Jacobians and affine Springer fibers can be defined for algebraic links, or (more or less equivalently) for matrices with entries in Laurent series. The characteristic polynomial of such a matrix defines a plane curve singularity which intersects a small sphere at a link. The Hilbert schemes of points on a plane curve singularity are closely related to the ``local" version of the Hitchin fibration \cite{MS,MY}, and ``local'' curve-counting invariants \cite{Maulik}. On the other hand, they play an important role in the generalized Springer theory \cite{HKW, GarKiv} for Coulomb branch algebras defined by Braverman, Finkelberg, and Nakajima \cite{BFN,BFN2}.  

A beautiful conjecture of Oblomkov, Rasmussen and Shende \cite{ORS} relates the homology of Hilbert schemes of points  on singular curves to the Khovanov-Rozansky homology of the corresponding links.  By the above, one expects a direct relation between the homology of these Hilbert schemes and of the corresponding braid variety, similar in spirit to the non-abelian Hodge correspondence between the character varieties and the Hitchin system \cite{dCHM}. This also remains a tantalizing conjecture. We review the precise statements and the state of the art for these conjectures below. 

Finally, yet another model for link homology comes from the Hilbert scheme of points on the plane. Roughly speaking, to any braid on $n$ strands one associates a coherent sheaf on $\Hilb^n(\C^2)$ whose cohomology matches the Khovanov-Rozansky homology of the link. Such a sheaf, or rather a complex of sheaves, was constructed by Oblomkov and Rozansky in a series of papers \cite{OR1,OR2,OR3,OR4,OR5,OR6,OR7,OR8}. Another approach \cite{GNR} highlights the importance of the graded algebra generated by the homologies of the powers of the full twist braid. We describe this algebra explicitly following \cite{GH} and explain its relation to the homogeneous coordinate ring of the Hilbert scheme.  This model is also closely related to the ``refined Chern-Simons invariants'' of Aganagic-Shakirov and Cherednik \cite{AS,Cherednik}.

In the case of torus knots, all of the above models yield explicit, yet very different combinatorial descriptions. The Khovanov-Rozansky homology for torus knots was computed in \cite{Mellit}, and the homology of the braid varieties (which in this case coincide with the open positroid strata in the Grassmannians) was computed in \cite{GL}. The homology of the Hilbert schemes on singular curves and affine Springer fibers was computed earlier in numerous papers, starting from \cite{LuSm}. Finally, the sheaves on $\Hilb^n(\C^2)$ for torus knots were constructed in \cite{GN} using the elliptic Hall algebra. The comparison between all these answers is highly nontrivial, and is related to the ``rational Shuffle conjecture'' in combinatorics of Macdonald polynomials, proved in \cite{Mellit3}.

Throughout the notes, we track various structures and homological operations in link homology, and describe their appearance in various models. In particular, we have an action of the polynomial algebra where the number of variables equals the number of link components. From the topological point of view, this corresponds to the action of the homology of the unknot on the homology of an arbitrary link with a chosen marked point. Furthermore, link homology admits a deformation, or ``$y$-ification'' \cite{GH} where a polynomial algebra in an additional set of variables plays an important role. Finally, there is an action of the Lie algebra $\mathfrak{sl}_2$ in the $y$-ified homology \cite{GHM} which implies the symmetry exchanging the two sets of variables. All these structures indicate that the relation between link homology and the geometric models holds on a much deeper categorical level than just isomorphisms of triply graded vector spaces. 

\subsection{Organization of the paper}

The notes are organized as follows. In Section 2 we remind the readers of the basics of knot theory, such as the braid group and theorems of Alexander and Markov on braid closures.

In Section 3 we define Khovanov-Rozansky homology using Soergel bimodules and Rouquier complexes. We describe a method to recursively compute the homology of torus (and other) links, and present many examples.

Section 4 is focused on braid varieties. We define braid varieties, outline their basic properties, and explain their relation to link homology and positroid varieties.

In Section 5 we describe more subtle properties and homological operations in link homology. In particular, we define $y$-ified homology and compute it for all powers of the full twist. We also define ``tautological classes'' in Khovanov-Rozansky homology and use them to outline the proof of the ``$q-t$'' symmetry in this homology. These abstract algebraic constructions are compared to the actual tautological classes in the homology of braid varieties.

In Section 6 we define Hilbert schemes on singular curves, compactified Jacobians, affine Springer fibers, and discuss the relations between them. We state the Oblomkov-Rasmussen-Shende conjecture and discuss the known evidence for it. A connection to geometric representation theory of Coulomb branch algebras is also discussed.

In Section 7 we define the Hilbert scheme of points on the plane and describe its properties, in particular, present it as a symplectic resolution and construct the Procesi bundle. We state an explicit conjecture relating braids to sheaves on the Hilbert scheme, and discuss some examples and approaches to the proof.

\subsection{Further reading}

In these lectures we chose to focus on  topics in link homology most closely related to commutative algebra and algebraic geometry. Unfortunately, this means that we had to skip many other topics of interest, which are also important for understanding the big picture.

In defining link homology, we focus on Soergel bimodules and do not discuss other approaches using webs, foams and categorified quantum groups, referring the reader to \cite{QR,RW} for more details. We do not discuss Khovanov \cite{Khovanov} or $\mathfrak{sl}(N)$ Khovanov-Rozansky homology \cite{KR1} and their relation to HOMFLY homology, and refer to \cite{Rasmussen,RasmussenNotes} instead.

We mostly avoid representation theory and categorification aspects. In particular, we do not define Hecke algebras or work with diagrammatics of Soergel bimodules, and refer the reader to \cite{Soergelbook} for these instead.  We refer to \cite{GORS,GSV} for the connections with the rational Cherednik algebra, and to \cite{GN,Mellit3} for the connections with the DAHA and elliptic Hall algebra.

We also do not discuss very rich combinatorics of $q,t$-Catalan numbers and Macdonald polynomials  \cite{BGSX,BHMPS,GaHa,GHSR,GN,Haglund,HaimanCatalan,Haiman,Haiman2,Mellit3} which deserves to be a subject of a separate course.

Finally, we recommend several other surveys on link homology \cite{Nawata,MBourbaki,Olectures,RasmussenNotes}.

\section*{Acknowledgments}

We would like to thank Andrei Negu\cb{t}, Francesco Sala and Olivier Schiffmann for organizing the IHES Summer School and creating a wonderful working environment despite the COVID-19 pandemic. We thank Mikhail Gorsky, Matt Hogancamp, Anton Mellit, Alexei Oblomkov, Jacob Rasmussen, Minh-Tam Trinh and Paul Wedrich for many discussions over the years and helpful comments about the first draft of these notes. 

We thank all of the participants of the online seminar on homology of torus (and other) knots in Spring 2021, especially Anton Mellit and Pavel Galashin, for very helpful comments and discussions. These lecture notes partly grew from the notes for that seminar \cite{UCDSeminar}.

The work of E. G. was partially supported by the NSF grant DMS-1760329. The work of O.K. was partially supported by the Finnish Academy of Science and Letters. J. S. is grateful for the hospitality and financial support of the Max Planck Institute for Mathematics, where this work was carried out.

\section{Background on link invariants}

In this section we record some basic facts on link invariants.
The {\bf braid group} on $n$ strands has generators $\sigma_1,\ldots,\sigma_{n-1}$ and relations
\begin{equation}
\label{eq: braid}
    \sigma_i\sigma_{i+1}\sigma_i=\sigma_{i+1}\sigma_i\sigma_{i+1},\quad \sigma_i\sigma_j=\sigma_j\sigma_i\ (|i-j|>1).
\end{equation}
We will visualize the generators $\sigma_i$ as positive crossings, and $\sigma_i^{-1}$ as negative crossings:
\begin{center}
    \begin{tikzpicture}
    \draw (-0.2,0)--(-0.2,1);
    \draw (0.3,1)..controls (0.3,0.5) and (0.7,0.5)..(0.7,0);
    \draw[white,line width=3] (0.3,0)..controls (0.3,0.5) and (0.7,0.5)..(0.7,1);
    \draw (0.3,0)..controls (0.3,0.5) and (0.7,0.5)..(0.7,1);
    \draw (1.2,0)--(1.2,1);
    \draw (0.1,0.5) node {$\cdots$};
    \draw (0.9,0.5) node {$\cdots$};
    
    \draw (1.8,0)--(1.8,1);
    \draw (2.3,0)..controls (2.3,0.5) and (2.7,0.5)..(2.7,1);
    \draw[white,line width=3] (2.3,1)..controls (2.3,0.5) and (2.7,0.5)..(2.7,0);
    \draw (2.3,1)..controls (2.3,0.5) and (2.7,0.5)..(2.7,0);
    \draw (3.2,0)--(3.2,1);
    \draw (2.1,0.5) node {$\cdots$};
    \draw (2.9,0.5) node {$\cdots$};
    \draw (0.5,-0.2) node {$\sigma_i$};
    \draw (2.5,-0.2) node {$\sigma_i^{-1}$};
    \end{tikzpicture}
\end{center}
The strands in a braid are labeled from $1$ to $n$, and the composition is given by vertical stacking. The following theorems \cite{Alex,Birman,Markov} relate links and braids.

\begin{theorem}[Alexander]
Any link can be obtained as a closure of some braid.
\end{theorem}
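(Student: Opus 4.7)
The plan is to follow Alexander's original argument: given an oriented link diagram in the plane, choose a basepoint $O$ (the projection of the intended ``braid axis''), and modify the diagram by isotopy until every arc of the diagram circulates around $O$ in the same (say, counterclockwise) rotational sense. Once this is achieved, cutting the plane along a ray from $O$ to infinity and unrolling produces a braid whose closure is the original link.

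More precisely, I would first put an oriented diagram $D$ of the link $L$ in general position with respect to $O$, so that no crossing lies on a ray from $O$ and all tangent lines at crossings miss $O$. This divides $D$ into finitely many oriented arcs (edges between consecutive crossings); call an arc \emph{good} if, traversed with its orientation, it rotates counterclockwise around $O$, and \emph{bad} otherwise. The key combinatorial quantity is the number of bad arcs, which I will drive to zero by repeated use of the following trick: a bad arc $\alpha$ can be slid across $O$ and replaced by two arcs $\alpha', \alpha''$ that together go the long way around $O$ (hence are good), at the cost of creating new crossings where $\alpha', \alpha''$ pass over or under portions of $D$ that lay between $\alpha$ and $O$. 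By subdividing $\alpha$ first if necessary, one can arrange that each elementary move removes one bad arc and introduces only good arcs, so a suitable complexity (for instance, the lexicographic pair (number of bad arcs, total number of crossings in bad arcs)) strictly decreases.

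After finitely many Alexander moves the resulting diagram $D'$ has only good arcs. This means every arc is monotone in the angular coordinate around $O$, so $D'$ fibers over the circle $S^1$ of angles centered at $O$ with a fixed number $n$ of preimages in each generic fiber; in other words $D'$ is a closed braid about the axis through $O$ perpendicular to the plane. Cutting along any ray from $O$ that avoids the crossings of $D'$ and straightening gives an honest $n$-strand braid word $\beta$ (each crossing contributes some $\sigma_i^{\pm 1}$ according to which strands cross and in which sense), and by construction the closure $\widehat{\beta}$ is isotopic to the original link $L$.

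The main obstacle is the termination and well-definedness of the iterative procedure: one has to choose the smoothing of a bad arc carefully so that the newly created sub-arcs are genuinely good, and so that the chosen complexity does not accidentally increase when new crossings are introduced. This is handled by subdividing a bad arc into short pieces, each of which sees only a controlled portion of $D$ between itself and $O$, and by using the angular coordinate around $O$ to define the complexity; the verification that the complexity strictly drops is the one place where care is required, and everything else is essentially bookkeeping. Alternative approaches (Yamada's algorithm or Vogel's Seifert-circle method) provide more efficient algorithms but would require additional setup that is not needed here, so I would stick with the classical argument sketched above.
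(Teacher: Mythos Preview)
Your sketch is a correct outline of Alexander's classical argument, but there is nothing to compare it to: the paper does not prove this theorem. It is stated as background, attributed to Alexander, and the reader is referred to \cite{Alex,Birman,Markov}; the paper then immediately passes to Markov's theorem and the strategy for building link invariants. So your proposal is not reproducing the paper's proof --- there is none --- but supplying one from the standard literature.

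As for the content of your sketch itself, it is essentially right. One small caution: the complexity you propose, the lexicographic pair (number of bad arcs, number of crossings on bad arcs), is more than you need and can be awkward to control, since throwing a bad arc across $O$ may create many new crossings with the rest of the diagram. The cleaner classical argument first subdivides so that every bad arc contains at most one crossing (adding vertices does not change goodness/badness of the pieces), and then observes that the Alexander move on such a short bad arc replaces it by good arcs without creating any \emph{new} bad arcs; hence the number of bad arcs alone strictly decreases. If you tighten that step, the termination is immediate and the rest of your outline goes through.
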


\begin{theorem}[Markov]
Two braid closures represent the same link if and only if the braids are related by a sequence of the following moves:
\begin{center}
    \begin{tikzpicture}
      \draw (0,0)--(0,1)--(1,1)--(1,0)--(0,0);
      \draw (0,2)--(0,3)--(1,3)--(1,2)--(0,2);
      \draw (0.2,1)--(0.2,2);
      \draw (0.8,1)--(0.8,2);
      \draw (0.5,1.5) node {$\cdots$};
      \draw (0.5,0.5) node {$\alpha$};
      \draw (0.5,2.5) node {$\beta$};
      \draw (0.2,0)..controls (0.2,-0.5) and (1.8,-0.5)..(1.8,0);
      \draw (0.8,0)..controls (0.8,-0.2) and (1.2,-0.2)..(1.2,0);
      \draw (0.2,3)..controls (0.2,3.5) and (1.8,3.5)..(1.8,3);
      \draw (0.8,3)..controls (0.8,3.2) and (1.2,3.2)..(1.2,3);
      \draw (1.2,0)--(1.2,3);
      \draw (1.8,0)--(1.8,3);
      \draw (2.5,1.5) node {$\simeq$};
      \draw (3,0)--(3,1)--(4,1)--(4,0)--(3,0);
      \draw (3,2)--(3,3)--(4,3)--(4,2)--(3,2);
      \draw (3.2,1)--(3.2,2);
      \draw (3.8,1)--(3.8,2);
      \draw (3.5,1.5) node {$\cdots$};
      \draw (3.5,0.5) node {$\beta$};
      \draw (3.5,2.5) node {$\alpha$};
      \draw (3.2,0)..controls (3.2,-0.5) and (4.8,-0.5)..(4.8,0);
      \draw (3.8,0)..controls (3.8,-0.2) and (4.2,-0.2)..(4.2,0);
      \draw (3.2,3)..controls (3.2,3.5) and (4.8,3.5)..(4.8,3);
      \draw (3.8,3)..controls (3.8,3.2) and (4.2,3.2)..(4.2,3);
      \draw (4.2,0)--(4.2,3);
      \draw (4.8,0)--(4.8,3);
    \end{tikzpicture}
\end{center}
\begin{center}
    \begin{tikzpicture}
    \draw (0,0)--(0,1)--(1,1)--(1,0)--(0,0);
    \draw (0.5,0.5) node {$\alpha$};
    \draw (0.2,-1)--(0.2,0);
    \draw (0.8,-1)--(0.8,0);
    \draw (0.5,-0.5) node {$\cdots$};
    \draw (0.2,-1)..controls (0.2,-1.5) and (1.8,-1.5)..(1.8,-1);
    \draw (0.8,-1)..controls (0.8,-1.2) and (1.2,-1.2)..(1.2,-1);
    \draw (0.2,1)..controls (0.2,1.5) and (1.8,1.5)..(1.8,1);
    \draw (0.8,1)..controls (0.8,1.2) and (1.2,1.2)..(1.2,1);
    \draw (1.2,-1)--(1.2,1);
    \draw (1.8,-1)--(1.8,1);
    
    \draw (2.5,0.5) node {$\simeq$};
    
    \draw (3,0)--(3,1)--(4,1)--(4,0)--(3,0);
    \draw (3.5,0.5) node {$\alpha$};
    \draw (3.2,-1)--(3.2,0);
    \draw (3.8,0)..controls (3.8,-0.5) and (4.2,-0.5)..(4.2,-1);
    \draw[white,line width=3] (3.8,-1)..controls (3.8,-0.5) and (4.2,-0.5)..(4.2,0);
    \draw (3.8,-1)..controls (3.8,-0.5) and (4.2,-0.5)..(4.2,0);
    \draw (4.2,0)--(4.2,1);
    \draw (3.5,-0.5) node {$\cdots$};
    \draw (3.2,-1)..controls (3.2,-2) and (5.4,-2)..(5.4,-1);
    \draw (3.8,-1)..controls (3.8,-1.5) and (4.8,-1.5)..(4.8,-1);
    \draw (4.2,-1)..controls (4.2,-1.1) and (4.4,-1.1)..(4.4,-1);
    \draw (3.2,1)..controls (3.2,2) and (5.4,2)..(5.4,1);
    \draw (3.8,1)..controls (3.8,1.5) and (4.8,1.5)..(4.8,1);
    \draw (4.2,1)..controls (4.2,1.1) and (4.4,1.1)..(4.4,1);
    \draw (4.4,-1)--(4.4,1);
    \draw (4.8,-1)--(4.8,1);
     \draw (5.4,-1)--(5.4,1);
     
     \draw (5.7,0.5) node {$\simeq$};
     
     \draw (6,0)--(6,1)--(7,1)--(7,0)--(6,0);
    \draw (6.5,0.5) node {$\alpha$};
    \draw (6.2,-1)--(6.2,0);
    \draw (6.8,-1)..controls (6.8,-0.5) and (7.2,-0.5)..(7.2,0);
    \draw[white,line width=3] (6.8,0)..controls (6.8,-0.5) and (7.2,-0.5)..(7.2,-1);
    \draw (6.8,0)..controls (6.8,-0.5) and (7.2,-0.5)..(7.2,-1);
    \draw (7.2,0)--(7.2,1);
    \draw (6.5,-0.5) node {$\cdots$};
    \draw (6.2,-1)..controls (6.2,-2) and (8.4,-2)..(8.4,-1);
    \draw (6.8,-1)..controls (6.8,-1.5) and (7.8,-1.5)..(7.8,-1);
    \draw (7.2,-1)..controls (7.2,-1.1) and (7.4,-1.1)..(7.4,-1);
    \draw (6.2,1)..controls (6.2,2) and (8.4,2)..(8.4,1);
    \draw (6.8,1)..controls (6.8,1.5) and (7.8,1.5)..(7.8,1);
    \draw (7.2,1)..controls (7.2,1.1) and (7.4,1.1)..(7.4,1);
    \draw (7.4,-1)--(7.4,1);
    \draw (7.8,-1)--(7.8,1);
     \draw (8.4,-1)--(8.4,1);
    \end{tikzpicture}
\end{center}
\end{theorem}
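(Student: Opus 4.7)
The plan is to prove the two directions separately. The \emph{if} direction is direct verification by drawing pictures: for conjugation $\alpha\beta \simeq \beta\alpha$, I would observe that the closure of a product is invariant under cyclic permutation, since one can slide the factor $\beta$ around through the closure arcs from the bottom to the top without creating any crossings. For stabilization $\alpha \simeq \alpha\sigma_n^{\pm1}$, the additional crossing together with the new closure arc forms a kink that can be removed by a Reidemeister I move, so the two closures are isotopic links.

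For the \emph{only if} direction I would combine Alexander's theorem with Reidemeister's theorem on link diagrams. Given braids $\alpha,\beta$ whose closures $\hat\alpha,\hat\beta$ are isotopic, there is a finite sequence of planar isotopies and Reidemeister moves of types I, II, III connecting the closure diagrams. The strategy is to introduce the notion of a \emph{braided diagram}, i.e.\ a link diagram together with a choice of braid axis, and to argue in two stages. First, if two braided diagrams share the \emph{same} axis and represent the same link, then their braid words differ by braid relations \eqref{eq: braid} together with conjugation (the first Markov move), obtained by sliding strands around the axis. Second, any two choices of braid axis for the same link can be connected by a sequence of elementary axis-changing moves, each of which corresponds to a single stabilization $\alpha \leftrightarrow \alpha\sigma_n^{\pm1}$ or its inverse.

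The hard part will be the second stage. The cleanest implementation follows Birman and Morton via braid foliations, or alternatively Vogel's or Yamada's algorithmic upgrade of Alexander's theorem, where one shows that any two braid presentations of the same diagram produced by different choices in the algorithm differ by Markov moves. Concretely, the key lemma is that if a braided diagram is modified by a Reidemeister move supported in a disk, then after re-braiding via Alexander's algorithm the resulting braid word differs from the original by a sequence of braid relations, conjugations, and stabilizations. Reidemeister I moves convert directly into stabilizations; Reidemeister II and III moves, after possibly introducing additional stabilizations to bring both sides into braid form, reduce to the braid relations. Once this lemma is established for each Reidemeister move, the theorem follows by induction on the length of the isotopy.
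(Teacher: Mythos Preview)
The paper does not prove this theorem; it is quoted as classical background, stated without proof and attributed to the references \cite{Alex,Birman,Markov}. So there is no ``paper's own proof'' to compare your proposal against.

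That said, your outline is a reasonable sketch of the standard argument. The \emph{if} direction is indeed an easy isotopy check. For the \emph{only if} direction, the approaches you name (Birman--Morton braid foliations, or the Vogel/Yamada algorithmic refinement of Alexander's theorem) are the correct modern references, though your description of the ``key lemma'' is rather vague: the genuinely hard content is not reducing individual Reidemeister moves to Markov moves, but controlling what happens globally when you change the braid axis, and this is where the real work in any proof lies. If you were to flesh this out, you would need to commit to one of these frameworks and carry it through, rather than invoking all of them at once.
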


We will refer to these moves as to conjugation and (respectively positive and negative) stabilization, or collectively as Markov moves. Using these theorems, we can sketch a possible strategy of constructing link invariants as follows:
\begin{itemize}
    \item Assign some objects to crossings $\sigma_i^{\pm}$.
    \item Verify braid relations \eqref{eq: braid}. This would yield a braid invariant.
    \item Describe an operation for closing a braid.
    \item Verify that the result is invariant under Markov moves (conjugation and stabilization).
\end{itemize}

Most of link invariants described in these lectures will follow this strategy. Sometimes it will be helpful to consider slightly weaker invariants for braid closures, which do not change only under conjugation, or under conjugation and positive stabilization.

\section{Khovanov-Rozansky homology: definitions and computations}

All link homologies and most of the computations in these notes can be defined with integer coefficients. For the interest of clarity and various technical simplifications, we work over $\C$ instead.

\subsection{Soergel bimodules and Rouquier complexes}

Consider the ring $R=\C[x_1,\ldots,x_n]$ with the action of the symmetric group $S_n$ which permutes the variables. We will consider various rings of invariants, most importantly the rings $R^{s_{i}}$ of polynomials that are invariant under the transposition $x_{i} \leftrightarrow x_{i+1}$, $i = 1, \dots, n-1$. We will work with $R-R$ bimodules which we alternatively interpret as modules over $\C[x_1,\ldots,x_n,x'_1,\ldots,x'_n]$ where the left action of $R$ corresponds to the action of $x_i$, and the right action corresponds to the action of $x'_i$. Given two $R-R$ bimodules $M$ and $N$, we can consider their tensor product $M\otimes_R N$. The left action of $R$ on $M, N$ will be denoted by $x_i$, $x'_i$, respectively, and the right action will be denoted by $x'_i, x''_i$, respectively. Note that this is consistent with the relations on the tensor product $M \otimes_R N$.

\begin{remark}
To ease the notation, we will sometimes write $MN$ for $M \otimes_{R} N$. If, on the other hand, we are taking the tensor product over a ring other than $R$, we will always indicate this in the notation.
\end{remark}

Note that the ring $R$ is graded, with $\deg(x_i) = 2$ for every $i = 1, \dots, n$. We will work with graded $R$-bimodules, so we have a decomposition $M = \bigoplus_{j \in \Z} M_{j}$ with $x_{i}M_{j}, M_{j}x_{i} \subseteq M_{j+2}$. The tensor product of graded bimodules is naturally graded, and we will denote the grading shift by $(1)$, so that $M(1)_{i} = M_{i+1}$. 

\begin{remark}
Note that, under this convention for the grading shift, if the graded dimension of $M$ is $\operatorname{gdim}(M) = \sum_{i} Q^{i}\dim M_{i}$, then $\operatorname{gdim}(M(1)) = Q^{-1}\operatorname{gdim}(M)$. 
\end{remark}

For us, the most important bimodules are
$$
B_i:=R\otimes_{R^{s_i}} R(1)=\frac{\C[x_1,\ldots,x_n,x'_1,\ldots,x'_n]}{\left(x_i+x_{i+1}=x'_i+x'_{i+1},\ x_ix_{i+1}=x'_ix'_{i+1},\ x_j=x'_j\ (j\neq i,i+1)\right)}
$$
where $s_i=(i\ i+1)$ and $i$ runs from $1$ to $(n-1)$. Note that, due to the grading shift, the degree of $x_{j} \in B_{i}$ is $1$ for every $j$. Likewise, the degree of $1 \in B_{i}$ is $-1$. 

\begin{definition}
The category $\SBim_n$ of Soergel bimodules is the smallest full subcategory of the category of graded $R-R$-bimodules containing $R$ and $B_i$ and closed under direct sums, grading shifts, tensor products and direct summands.
\end{definition}

\begin{lemma}
\label{lem: Soergel quadratic}
We have
\begin{equation}
\label{eq: Soergel quadratic}
    B_i\otimes_{R} B_i\simeq B_i(1)\oplus B_i(-1).
\end{equation}
\end{lemma}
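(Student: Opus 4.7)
The plan is to reduce everything to the elementary fact that $R$ is free of rank two over $R^{s_i}$, and then carefully track grading shifts. Unfolding the definitions,
\[
B_i \otimes_R B_i \;=\; \bigl(R \otimes_{R^{s_i}} R\bigr)(1) \otimes_R \bigl(R \otimes_{R^{s_i}} R\bigr)(1) \;\cong\; \bigl(R \otimes_{R^{s_i}} R \otimes_{R^{s_i}} R\bigr)(2),
\]
so the whole statement hinges on understanding the middle copy of $R$ as an $R^{s_i}$-bimodule.

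The key step is to establish the graded $R^{s_i}$-bimodule decomposition
\[
R \;\cong\; R^{s_i} \,\oplus\, R^{s_i}(-2),
\]
with summands spanned by $1$ and $x_{i+1}$. Explicitly, given $f \in R$ set $f_1 = \partial_i(f) := (f - s_i(f))/(x_i - x_{i+1})$ and $f_0 = f - f_1 x_{i+1}$; a direct computation shows that $f_0, f_1 \in R^{s_i}$ and that the presentation $f = f_0 + f_1 x_{i+1}$ is unique, giving the decomposition as left $R^{s_i}$-modules. Since $R$ is commutative, left and right $R^{s_i}$-actions coincide, so this is automatically an $R^{s_i}$-bimodule decomposition. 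The shift $(-2)$ is forced because the convention $M(d)_k = M_{k+d}$ places the identity of $R^{s_i}(-2)$ in degree $2$, matching $\deg(x_{i+1}) = 2$.

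Substituting into the middle factor and using that grading shifts commute with tensor products,
\[
R \otimes_{R^{s_i}} R \otimes_{R^{s_i}} R \;\cong\; \bigl(R \otimes_{R^{s_i}} R\bigr) \,\oplus\, \bigl(R \otimes_{R^{s_i}} R\bigr)(-2);
\]
applying the global shift $(2)$ and rewriting $R \otimes_{R^{s_i}} R = B_i(-1)$ then yields $B_i(1) \oplus B_i(-1)$. The only real obstacle in the argument is bookkeeping the grading shifts — it is easy to be off by a sign. As a sanity check, $B_i$ has graded rank $Q + Q^{-1}$ over $R$, and the identity $(Q + Q^{-1})^2 = Q^{-1}(Q + Q^{-1}) + Q(Q + Q^{-1})$ matches the graded ranks of $B_i(1) \oplus B_i(-1)$.
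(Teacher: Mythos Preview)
Your approach is essentially the same as the paper's: both reduce to $R\otimes_{R^{s_i}}R\otimes_{R^{s_i}}R(2)$ and then split the middle $R$ as a free rank-two graded $R^{s_i}$-module. The paper uses the eigenspace decomposition $R=R^{s_i}\oplus R^{\epsilon}$ (symmetric plus $s_i$-alternating, identifying $R^{\epsilon}\cong R^{s_i}(-2)$ via division by $x_i-x_{i+1}$), while you use the basis $\{1,x_{i+1}\}$; these are interchangeable choices.

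One small slip: with the basis $\{1,x_{i+1}\}$ and your sign convention $\partial_i(f)=(f-s_i(f))/(x_i-x_{i+1})$, the coefficient of $x_{i+1}$ is $-\partial_i(f)$, not $+\partial_i(f)$. Indeed, if $f=f_0+f_1x_{i+1}$ with $f_0,f_1\in R^{s_i}$, then $f-s_i(f)=f_1(x_{i+1}-x_i)$. With your formula as written, $f_0=f-\partial_i(f)x_{i+1}$ is not symmetric (try $f=x_i$: you get $f_0=x_i-x_{i+1}$). Either flip the sign on $f_1$ or use $x_i$ in place of $x_{i+1}$; then your ``direct computation'' goes through and the rest of the argument is fine.
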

\begin{proof}
Let $s=(i\; i+1)$. We have 
$$B_i\otimes_R B_i= (R\otimes_{R^s} R(1)) \otimes_R (R\otimes_{R^s} R(1))=R\otimes_{R^s} R\otimes_{R^s} R(2)$$
Decompose $R=R^s\oplus R^{\epsilon}$ where $\epsilon$ denotes the $s$-alternating part. As graded $R_s$-bimodules, $R^\epsilon\cong R^s(-2)$ (the isomorphism divides $p \in R^\epsilon$ by $x_i-x_{i+1}$ or more generally by $\alpha_s$).
Therefore, we obtain
\begin{align*}
R\otimes_{R^s} (R^s\oplus R^\epsilon)\otimes_{R^s} R(2) &= \left[R\otimes_{R^s} R(2)\right] \oplus \left[R\otimes_{R^s} R^{s}(-2) \otimes_{R^s} R(2)\right]\\
 &= B_i(1)\oplus B_i(-1)
\end{align*}
\end{proof}

\begin{example}
One can also check the equation
\begin{equation}
\label{eq: Soergel braid}
    B_i\otimes_{R}B_{i+1}\otimes_{R} B_i\simeq B_i\oplus B_{i,i+1},\ \text{where}\ B_{i,i+1}=R\otimes_{R^{s_i,s_{i+1}}} R(3).
\end{equation}
Note that $s_i$ and $s_{i+1}$ generate a subgroup in $S_n$ isomorphic to $S_3$.
\end{example}

\begin{remark}
The category of Soergel bimodules can be defined for any Coxeter group. The analogues of $B_i$ correspond to simple reflections, the analogue of \eqref{eq: Soergel quadratic} holds on the nose  and the analogue of \eqref{eq: Soergel braid} holds with certain modifications. We refer to \cite{Soergelbook} for more details and references on Soergel bimodules.
\end{remark}

We will be working with the homotopy category of (bounded) complexes of Soergel bimodules which we denote by $\CK_n:=\CK^b(\SBim_n)$. 

\begin{remark}
 The category of Soergel bimodules is additive but not abelian. This means that the homotopy category $\CK^b(\SBim_n)$ still makes sense, but one cannot a priori define a derived category of Soergel bimodules. In principle, one can consider a subcategory of the derived category of all $R-R$ bimodules generated by Soergel bimodules, but this would lead to lots of confusion and incorrect answers. For example, in the derived category the complexes of \eqref{eq: rouquier} are isomorphic up to a grading shift.
 
 This situation might be compared to the construction of the derived category as the homotopy category of projective modules. Indeed, by the work of Soergel \cite{Soe} the category of Soergel bimodules is closely related to the Bernstein-Gelfand-Gelfand category $\mathcal{O}$, and Soergel modules correspond to projective objects in that category.
\end{remark}

Next, we describe some morphisms between Soergel bimodules.

\begin{lemma}
\label{lem: problem 2}
(a) There is a natural projection from $B_i(-1)$ to $R$ which sends $1$ to $1$. 

(b) There is a well defined morphism of bimodules $R\to B_i(1)$ which sends $1$ to $x_i-x'_{i+1}$.
\end{lemma}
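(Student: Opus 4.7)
The plan for part (a) is to identify the map with the multiplication map $\mu \colon R \otimes_{R^{s_i}} R \to R$, $a \otimes b \mapsto ab$. First I would check well-definedness over $R^{s_i}$: for any $r \in R^{s_i}$ (in fact for any $r \in R$), both $ar \otimes b$ and $a \otimes rb$ are sent to $arb$. Bimodule-linearity is immediate from associativity of multiplication in $R$. Finally I would check the grading: in the unshifted tensor product $R \otimes_{R^{s_i}} R$ the element $1 \otimes 1$ sits in degree $0$, and the extra $(-1)$ applied to $B_i = R \otimes_{R^{s_i}} R(1)$ exactly cancels the shift in $B_i$, so the map $B_i(-1) \to R$ preserves degree and sends $1 \mapsto 1$.

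For part (b), the plan is to define the map on the generator of $R$ by $1 \mapsto u := x_i - x'_{i+1}$, verify the degree, and then check that $u$ is bimodule-central in $B_i(1)$, i.e.\ that $x_j \cdot u = u \cdot x'_j$ for all $j$. A degree check first: in $B_i(1)$ the element $1$ sits in degree $-2$, while $x_i$ and $x'_{i+1}$ each sit in degree $0$, so $u$ has degree $0$, matching the degree of $1 \in R$. Bimodule-centrality for $j \neq i,i+1$ is automatic from the defining relation $x_j = x'_j$ and the fact that $x_j$ commutes with both $x_i$ and $x_{i+1}$ in $R$. The only real content is the cases $j = i$ and $j = i+1$.

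For these I would work inside $R \otimes_{R^{s_i}} R$ and use the two symmetry relations $x_i + x_{i+1} \in R^{s_i}$ and $x_i x_{i+1} \in R^{s_i}$, which let me move these combinations across the tensor. Writing $u = x_i \otimes 1 - 1 \otimes x_{i+1}$, I would expand $u \cdot x_i = x_i \otimes x_i - 1 \otimes x_{i+1} x_i$, rewrite $x_i \otimes x_i = x_i \otimes (x_i + x_{i+1}) - x_i \otimes x_{i+1} = (x_i^2 + x_i x_{i+1}) \otimes 1 - x_i \otimes x_{i+1}$, and use $x_i x_{i+1} \in R^{s_i}$ to rewrite $1 \otimes x_i x_{i+1} = x_i x_{i+1} \otimes 1$. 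The mixed terms cancel, leaving $u \cdot x_i = x_i^2 \otimes 1 - x_i \otimes x_{i+1} = x_i \cdot u$. The case $j = i+1$ is completely symmetric, using the same two relations (alternatively one observes $x_i - x'_{i+1} = x'_i - x_{i+1}$ and applies the $j=i$ argument in the mirrored form).

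The only real obstacle is the bookkeeping in the last verification; both parts reduce to a direct check, and no homotopy-categorical or derived machinery is needed. Once the well-definedness in (b) is established, the map is automatically a morphism of graded $R$-bimodules by extending $R$-linearly from the image of $1$.
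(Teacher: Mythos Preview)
Your proof is correct and follows essentially the same route as the paper: for (a) you identify the map with multiplication $f\otimes g\mapsto fg$ and sort out the grading shift, and for (b) you reduce to checking that $u=x_i\otimes 1-1\otimes x_{i+1}$ satisfies $x_j\cdot u=u\cdot x_j$ for $j=i,i+1$ using that $x_i+x_{i+1}$ and $x_ix_{i+1}$ lie in $R^{s_i}$ and hence pass across the tensor. The paper carries out the same verification, phrased in the quotient-ring presentation of $B_i$ rather than the tensor-product one, and additionally notes that these maps are unique up to scalar (which is more than the lemma asserts); your notation wobbles slightly between $u\cdot x'_j$ and $u\cdot x_j$ for the right action, but the computation itself is sound.
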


\begin{proof}
Any $R-R$-bimodule homomorphism $B_i=R\otimes_{R^s} R\to R$ needs to send the bimodule generator $1\otimes 1$ somewhere in $R$. After fixing this, bilinearity forces the map to be an $R$-multiple of $f\otimes g \mapsto fg$. Finally, since $1 \in B_{i}$ has degree $-1$ while $1 \in R$ has degree $0$, we need the shift $B_i(-1)$ to have a map of graded bimodules.

Similarly, ignoring gradings for the time being, a map $\mu: R\to B_i$ needs to send $\mu:  1\mapsto \sum_i a_i\otimes_{R^s} b_i$ for some $a_i, b_i$. For this to be a bimodule homomorphism, 
we need $p\mu(1)=\mu(1)p$ since $R$ is commutative. Decomposing $p=p^s+\alpha_s p^\epsilon$ we get
$$\sum_i a_i \otimes b_i p^s+
\alpha_s a_i\otimes b_i p^\epsilon=
\sum_i a_i \otimes b_i p^s+a_i\otimes b_i\alpha_s p^\epsilon
$$ meaning that $1\mapsto 1\otimes \alpha_s + \alpha_s \otimes 1$ works. For the $W=S_n$ case, we have $\alpha_s=2(x_i-x_{i+1}')$. That multiples of $\mu$ are all the homomorphisms can be seen e.g. from the tensor-hom adjunction for bimodules and the first part.

Concretely, let us prove that there is a map of bimodules $R\to B_i$ which sends $1$ to $x_i-x'_{i+1}$. We need to check that it sends the defining ideal for $R$ inside the defining ideal for $B_i$. Indeed,
$$
(x_i-x'_{i+1})(x_i-x'_i)=\textrm{(symmetric in\ } x'_i,\ x'_{i+1})=
(x_i-x_{i+1})(x_i-x_i)=0,
$$
similarly
$$
(x_i-x'_{i+1})(x_{i+1}-x'_{i+1})=(x'_{i}-x'_{i+1})(x'_{i+1}-x'_{i+1})=0.
$$
Note that 
$$
1\otimes \alpha_s + \alpha_s \otimes 1=(x_i-x_{i+1})+(x'_{i}-x'_{i+1})=
$$
$$
2x_i-(x_i+x_{i+1})+(x'_{i}+x'_{i+1})-2x'_{i+1}=2(x_i-x'_{i+1}),
$$
so the two solutions agree up to a scalar. Finally, note that  since $x_i - x_{i+1}'$ has degree $1$ in $B_i$, we need to shift the degree on $B_i$ in order to make this a map of graded $R$-bimodules.
\end{proof}

\begin{remark}\label{rmk:hom grading}
Note that we have $\Hom_{R\text{-bimod}}(R, B_i) = R(-1)$, since the map $1 \to x_i - x'_{i+1}$ has degree $1$. 
\end{remark}

Using the above maps between $B_i$ and $R$, we can define {\bf Rouquier complexes} as their cones:
\begin{equation}
\label{eq: rouquier}
    T_i:=[B_i(-1)\to \underline{R}],\quad T_i^{-1}=[\underline{R}\to B_i(1)].
\end{equation}
Here the underlined terms are in the homological degree zero and the maps are defined in Lemma \ref{lem: problem 2} above.
The following is the fundamental result of Rouquier \cite{Rouquier}:
\begin{theorem}[\cite{Rouquier}]
\label{th: rouquier}
The complexes $T_i$ and $T_i^{-1}$ satisfy braid relations up to homotopy:
$$
T_i\otimes_R T_i^{-1}\simeq R,\quad T_i\otimes_R T_{i+1}\otimes_R T_i\simeq T_{i+1}\otimes_R T_i\otimes_R T_{i+1},\quad T_i\otimes_R T_j=T_j\otimes_R T_i,\ (|i-j|>1).
$$
\end{theorem}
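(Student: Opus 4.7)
The plan is to prove the three identities separately, in increasing order of difficulty. The unifying strategy is to expand each tensor product as the total complex of a cube of Soergel bimodules, apply the identities from Lemma~\ref{lem: Soergel quadratic} and the Example above to decompose each vertex into a direct sum of indecomposables, and then use Gaussian elimination on complexes to cancel contractible pairs of the form $X \xrightarrow{\mathrm{id}} X$.

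The easiest case is far commutativity, $|i-j|>1$. Here $B_i$ and $B_j$ are built from disjoint pairs of variables, so $B_i \otimes_R B_j \cong B_j \otimes_R B_i$ canonically, and the four-term cubes for $T_i \otimes_R T_j$ and $T_j \otimes_R T_i$ match term-by-term and differential-by-differential, giving an honest isomorphism of complexes, not merely a homotopy.

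For invertibility, I would expand $T_i \otimes_R T_i^{-1}$ as the three-term complex
\[
B_i(-1) \longrightarrow (B_i \otimes_R B_i) \oplus R \longrightarrow B_i(1),
\]
with outer differentials built from the projection and unit of Lemma~\ref{lem: problem 2}. Applying Lemma~\ref{lem: Soergel quadratic} to rewrite the middle term as $B_i(1) \oplus B_i(-1) \oplus R$, one checks, using the decomposition $R = R^{s_i} \oplus \alpha_{s_i} R^{s_i}$ from the proof of Lemma~\ref{lem: Soergel quadratic}, that the induced maps $B_i(-1) \to B_i(-1)$ and $B_i(1) \to B_i(1)$ are isomorphisms. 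Cancelling these two contractible summands leaves precisely $R$ in homological degree zero.

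The braid relation is the substantive case, and the main obstacle. Both $T_i T_{i+1} T_i$ and $T_{i+1} T_i T_{i+1}$ are total complexes of $2 \times 2 \times 2$ cubes with eight vertices. Using Lemma~\ref{lem: Soergel quadratic} to break up the repeated $B_i$ factors and \eqref{eq: Soergel braid} (together with its mirror $B_{i+1} B_i B_{i+1} \simeq B_{i+1} \oplus B_{i,i+1}$) to break up the triple products, every vertex of each cube decomposes into grading-shifted copies of $R$, $B_i$, $B_{i+1}$, and $B_{i,i+1}$. After a carefully chosen sequence of Gaussian eliminations, each side reduces to a common normal form, a short complex whose only terms are $B_{i,i+1}$ in the lowest homological degree, a single copy each of $B_i$ and $B_{i+1}$ in the middle, and $R$ in degree zero, with differentials determined up to scalar by the uniqueness in Lemma~\ref{lem: problem 2}. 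The hard part is not the vertex decomposition but verifying that the differentials inherited from Lemma~\ref{lem: problem 2}, when re-expressed on the decomposed vertices, actually pair up the summands that need to cancel rather than killing them or entangling them with the surviving pieces. Tracking signs in the total complex of a three-dimensional cube, managing the proliferation of grading shifts produced by repeated application of the quadratic relation, and ordering the eliminations so that earlier cancellations do not obstruct later ones, comprise the real content of Rouquier's original argument in \cite{Rouquier}.
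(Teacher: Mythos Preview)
Your proposal is correct and follows essentially the same approach as the paper: the invertibility argument is exactly Lemma~\ref{lem: tt-1} (expand the tensor product, apply $B_iB_i \simeq B_i(1)\oplus B_i(-1)$, and cancel the contractible subcomplex), while for the braid relation the paper likewise defers to \eqref{eq: Soergel braid} and Rouquier's original cube-and-cancellation argument, and declares far commutativity obvious. Your account is in fact more detailed than the paper's sketch, but the strategy is identical.
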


The proof of the first relation is sketched as a lemma below. The second equation can be proved similarly using \eqref{eq: Soergel braid}, and the last equation is obvious.

\begin{lemma}\label{lem: tt-1}
The complex $T_i\otimes_{R} T_i^{-1}$ is homotopy equivalent to $R$.
\end{lemma}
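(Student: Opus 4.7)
The plan is to expand $T_i \otimes_R T_i^{-1}$ as a three-term complex, decompose its middle using Lemma \ref{lem: Soergel quadratic}, and then apply Gaussian elimination twice to cancel contractible pieces, leaving only $R$ in homological degree zero.

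Unfolding the definitions in \eqref{eq: rouquier} and using $B_i \otimes_R R \cong B_i$ and $R \otimes_R M \cong M$, the tensor product $T_i \otimes_R T_i^{-1}$ takes the form
$$B_i(-1) \xrightarrow{d_{-1}} \bigl(B_i \otimes_R B_i\bigr) \oplus R \xrightarrow{d_0} B_i(1),$$
with the middle in homological degree zero. Writing $\pi\colon B_i(-1) \to R$ and $\mu\colon R \to B_i(1)$ for the projection and unit of Lemma \ref{lem: problem 2}, the differential $d_{-1}$ sends $b \mapsto \bigl(b \otimes \mu(1),\, \pi(b)\bigr)$ and $d_0$ sends $(c, r) \mapsto (\pi \otimes \mathrm{id})(c) + r \cdot \mu(1)$, up to signs from the total-complex convention. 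By Lemma \ref{lem: Soergel quadratic} the middle further decomposes as $B_i(1) \oplus B_i(-1) \oplus R$: in the presentation $B_i \otimes_R B_i = R \otimes_{R^s} R \otimes_{R^s} R(2)$, the $B_i(1)$ summand corresponds to the middle factor lying in $R^s$, while the $B_i(-1)$ summand is generated by $1 \otimes \alpha_s \otimes 1$.

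A direct computation, expanding $x_i = \tfrac{x_i + x_{i+1}}{2} + \tfrac{\alpha_s}{2}$ in the middle factor, shows that the image of the generator $1 \otimes 1 \in B_i(-1)$ under $b \mapsto b \otimes (x_i - x'_{i+1})$ contributes a nonzero multiple of $1 \otimes \alpha_s \otimes 1$; hence the component $B_i(-1) \to B_i(-1)$ of $d_{-1}$ is an isomorphism. Dually, $\pi \otimes \mathrm{id}$ restricted to the $B_i(1)$ summand of $B_i \otimes_R B_i$ sends the generator $(1 \otimes 1) \otimes (1 \otimes 1)$ to the generator $1 \otimes 1$ of the target $B_i(1)$, and so is an isomorphism on that summand.

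These two invertible components enable a double Gaussian elimination. The first uses $B_i(-1) \xrightarrow{\sim} B_i(-1)$ between positions $-1$ and $0$ to contract both copies of $B_i(-1)$, yielding a two-term complex $B_i(1) \oplus R \to B_i(1)$ whose differential is the restriction of the original $d_0$ to the surviving summands (Gaussian elimination does not alter these restrictions). The second uses $B_i(1) \xrightarrow{\sim} B_i(1)$ to contract both copies of $B_i(1)$, leaving only $R$ in degree zero, which gives the desired homotopy equivalence. The main obstacle is verifying the invertibility of the two components above; this requires careful bookkeeping in the triple tensor $R \otimes_{R^s} R \otimes_{R^s} R(2)$ and the fact that $R^s$-valued factors can be moved freely across $\otimes_{R^s}$ while $R^\epsilon$-valued ones cannot.
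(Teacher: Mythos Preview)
Your proof is correct and follows essentially the same route as the paper: expand the tensor product, use Lemma~\ref{lem: Soergel quadratic} to split the middle term as $B_i(1)\oplus B_i(-1)$, and then contract the acyclic piece $[B_i(-1)\to B_i(-1)\oplus B_i(1)\to B_i(1)]$ to leave only $R$. Your version is actually a bit more careful---where the paper simply asserts that this piece is a contractible ``subcomplex'', you explicitly verify that the two relevant matrix components are isomorphisms and invoke Gaussian elimination, which makes the cancellation rigorous without needing to argue that the piece is literally a direct summand of complexes.
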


\begin{proof}
The tensor product of complexes is 
\begin{align*}
    [B_i(-1) \xrightarrow{m} & \underline{R} ] \otimes_R \\ 
    [& \underline{R} \xrightarrow{\Delta} B_i(1)] \\
    = [B_i(-1) \xrightarrow{m\oplus \delta } &\underline{R \oplus B_i\otimes B_i} \xrightarrow{\Delta\oplus \mu}  B_i(1)]
\end{align*}
for $\delta=\id \otimes \Delta\circ m$ and $\mu=m\otimes \id $. Recall from Lemma \ref{lem: Soergel quadratic} that $B_i \otimes_{R} B_i\cong B_i(1)\oplus B_i(-1)$. This gives a subcomplex $[B_i(-1) \to B_i(-1)\oplus B_i(1) \to B_i(1)]\cong 0$ (with differentials as above) leaving us with $$T_i\otimes_R T^{-1}_i \cong [0\to \underline{R}\to 0]=R.$$
\end{proof}

Given a braid $\beta=\sigma_{i_1}^{\epsilon_1}\cdots \sigma_{i_r}^{\epsilon_r}$ where $\sigma_i$ are the braid group generators and $\epsilon_i=\pm 1$, we can define the corresponding {\bf Rouquier complex}
$$
T_{\beta}:=T_{i_1}^{\epsilon_1}\otimes_R\cdots \otimes_R T_{i_r}^{\epsilon_r}.
$$
By Theorem \ref{th: rouquier} this complex is a well defined object in the homotopy category $\CK_n$.

\begin{example} 
\label{ex: two strand}
We have
$$T_i^2=[B_i(-1)\oplus B_i(-3) \to B_i(-1)\oplus B_i(-1) \to \underline{R}]\cong [B_i(-3)\to B_i(-1)\to R]$$
and claim $$T_i^k\cong [\underbrace{B_i(-2k+1)\to B_i(-2k+3)\to \cdots \to B_i(-1)}_k \to \underline{R}]$$
Indeed, if $T_i^{k-1}$ is such then, decomposing $T_i^{k} = T_i^{k-1}T_i$ we get
$$\left[B_iB_i(-2k+2)\to \begin{matrix} B_i(-2k+3) \\ \oplus \\ B_iB_i(-2k+4) \end{matrix}\to \begin{matrix} B_i(-2k+5) \\ \oplus \\ B_iB_i(-2k+6)  \end{matrix} \to \cdots  \begin{matrix} B_i(-3) \\ \oplus \\ B_iB_i(-2) \end{matrix} \to \begin{matrix}B_i(-1) \\ \oplus \\ B_i(-1) \end{matrix} \to \underline{R}\right].$$
Using $B_iB_i\cong B_i(1)\oplus B_i(-1)$ and the form of the differentials above, this simplifies to what we want.
\end{example}

\subsection{Khovanov-Rozansky homology}

Next, we define the operation corresponding to the braid closure. If $M$ is an $R-R$ bimodule, we define its Hochschild cohomology as 
$$
\HH^i(M):=\Ext^i_{R\operatorname{-bimod}}(R,M).
$$
Given a complex $M_{\bullet}=(M_k,d)$ of $R-R$ bimodules (in particular, of Soergel bimodules), we define complexes
$$
\HH^i(M_{\bullet})=(\HH^i(M_k),d_i)
$$
where $d_i$ is the differential induced by $d$. In other words, we apply the functor $\HH^i(-)$ separately for each $i$, and term-wise in $M_{\bullet}$. The output is a collection of complexes of $R$-modules, one for each Hochschild degree.

\begin{remark}
More abstractly, for each $i$ $\HH^i(-)$ defines an additive functor on the category of $R-R$ bimodules, and hence an additive functor on the category of Soergel bimodules.  We extend this functor to the homotopy category $\CK_n$.
\end{remark}

\begin{remark}
The definition of $\HH^i(-)$ might appear a bit unnatural from the viewpoint of Soergel category. This issue is resolved in \cite{BT} where it is proved that the functors $\HH^i(-)$ are representable, that is, there are certain explicit complexes of Soergel bimodules $W_i$ such that $\Hom(W_i,-)\simeq \HH^i(-)$. 
\end{remark}

In particular, for $i=0$ we get $\HH^0(M)=\Hom_{R\operatorname{-bimod}}(R,M)$ for a bimodule $M$ and $$\HH^0(M_{\bullet})=\Hom_{R\operatorname{-bimod}}(R,M_{\bullet})$$ for a complex $M_{\bullet}$. Here we regard $\Hom$ between two complexes as a complex in a standard way. 

\begin{definition}
The Khovanov-Rozansky homology of the braid $\beta$ is defined as the homology of the Hochschild homology of the Rouquier complex $T_{\beta}$:
$$
\HHH(\beta)=H(\HH(T_{\beta})).
$$
\end{definition}

The Khovanov-Rozansky homology is {\bf triply graded}:
\begin{itemize}
    \item The  $Q$-grading corresponds to the internal grading on Soergel bimodules where all $x_i$ have degree 2. Note that all morphisms in definitions of $T_i$ and $T_i^{-1}$ are homogeneous, and the equations \eqref{eq: Soergel quadratic} and \eqref{eq: Soergel braid} hold with appropriate grading shifts. 
    \item The $T$-grading is the homological grading in the Rouquier complex $T_{\beta}$.
    \item The $A$-grading is the Hochschild degree which equals $i$ for $\HH^i$.
\end{itemize}

\begin{theorem}[\cite{KR2,KhSoergel}]
The Khovanov-Rozansky homology $\HHH(\beta)$ is a link invariant, up to an overall grading shift. More precisely, $\HHH(\beta)$ is invariant under conjugation and positive stabilization, while negative stabilization shifts it up by one $A$--degree.
\end{theorem}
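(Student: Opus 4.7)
The plan is to verify invariance of $\HHH(\beta)$ under each Markov move separately. Since $T_\beta$ is already a braid invariant up to homotopy by Theorem \ref{th: rouquier} and $\HH^*(-)$ descends to the homotopy category $\CK_n$, it suffices to check the behavior under conjugation and under positive and negative stabilization.

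For conjugation, the key ingredient is the cyclic property of Hochschild cohomology: for any two $R$-bimodules $M$ and $N$, there is a natural isomorphism of graded vector spaces
$$
\HH^*(M \otimes_R N) \;\cong\; \HH^*(N \otimes_R M),
$$
which extends term-wise to complexes of bimodules. I would prove this by using a free resolution of $R$ as an $R \otimes R$-module (the Koszul resolution is convenient, since $R$ is polynomial) that is manifestly cyclically symmetric, and then observing that both sides compute $\Ext^*_{R \otimes R}(R, M \otimes_R N) \cong \Ext^*_{R \otimes R}(R, N \otimes_R M)$. Applying this term-wise to $M = T_\alpha$ and $N = T_\beta$ yields $\HHH(\alpha\beta) \cong \HHH(\beta\alpha)$, with all three gradings preserved.

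For positive stabilization, regard an $n$-strand braid $\beta$ as an $(n+1)$-strand braid that does not touch the last strand. The goal is a partial-trace identity
$$
\HH^*_{n+1}\!\left( T_\beta \otimes_R T_n \right) \;\cong\; \HH^*_n(T_\beta)
$$
up to a definite $Q$- and $T$-grading shift, where $T_n$ is the Rouquier complex for the new generator $\sigma_n$. Unpacking $T_n = [B_n(-1) \to R]$ as a cone reduces this to computing $\HH^*_{n+1}$ on $T_\beta \otimes_R B_n$. Since $B_n$ couples only the last two strands, I would use a Koszul-type resolution in the variable $x_{n+1}$ to establish an identification of the form
$$
\HH^*_{n+1}(M \otimes_R B_n) \;\cong\; \HH^*_n(M) \otimes W
$$
for complexes $M$ pulled back from $n$ strands, where $W$ is a fixed two-dimensional bigraded vector space. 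A careful analysis of the cone differential in $T_n$ then shows that it collapses one of the two summands of $W$, leaving $\HH^*_n(T_\beta)$ with the expected shift. For negative stabilization, the same computation applied to $T_n^{-1} = [R \to B_n(1)]$ produces an analogous cancellation, except that the surviving copy of $\HH^*_n(T_\beta)$ now sits one Hochschild degree higher, because $B_n$ appears in homological degree $+1$ rather than $-1$. This is the advertised one-$A$-degree shift.

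The main obstacle will be the partial-trace computation of $\HH^*_{n+1}(M \otimes_R B_n)$ together with the precise tracking of the cone differential. One must verify that the differential in $T_n^{\pm 1}$ acts as a projection onto one specific summand of $\HH^*_n(M) \otimes W$ rather than mixing the two copies, and that this projection is compatible with all three gradings in exactly the way required. The asymmetry that yields the one-$A$-degree shift in the negative case, in contrast to genuine invariance in the positive case, emerges only once this bookkeeping is carried out carefully.
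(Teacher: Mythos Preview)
The paper does not give its own proof of this theorem; it only cites \cite{KR2,KhSoergel}. Your outline is essentially the standard strategy from those references, especially Khovanov's argument in \cite{KhSoergel}: the trace property of Hochschild (co)homology handles conjugation, and a partial Hochschild computation in the last variable handles stabilization. So there is nothing in the paper to compare against, and your plan is the right shape.

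One point in your sketch would lead you astray if you tried to carry it out. You attribute the $A$-degree shift under negative stabilization to the fact that ``$B_n$ appears in homological degree $+1$ rather than $-1$.'' But the homological position of $B_n$ in the Rouquier complex affects the $T$-grading, not the $A$-grading. The Hochschild-degree shift comes from a different mechanism: the partial trace over $x_{n+1}$ has two pieces (the $\HH^0$ and $\HH^1$ contributions with respect to the Koszul differential $x_{n+1}-x'_{n+1}$), and which piece survives the cone depends on the \emph{map} in $T_n^{\pm 1}$, not on where $B_n$ sits. For $T_n$ the multiplication map $B_n\to R$ kills the $\HH^1$ contribution and leaves $\HH^0$; for $T_n^{-1}$ the unit map $R\to B_n$ kills $\HH^0$ and leaves $\HH^1$. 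That asymmetry of the maps, not the sign of the homological degree of $B_n$, is what produces the one-step $A$-shift. A minor second point: justifying the trace property for Hochschild \emph{cohomology} via a ``manifestly cyclically symmetric'' Koszul resolution is not quite how it goes; the clean route is the standard trace property for Hochschild \emph{homology} together with the duality $\HH^i\cong\HH_{n-i}$ that holds for polynomial rings.
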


\begin{remark}
It is possible to fix the ambiguity of grading shift and get an honest link invariant, see e.g. \cite{Wu}.
\end{remark}

For most of these notes, we will focus on the $A=0$ part of Khovanov-Rozansky homology, which corresponds to $\HH^0$. It is invariant under conjugation and positive stabilization, but vanishes after a single negative stabilization.

\begin{example}
\label{ex: two strand homology}
Continuing Example \ref{ex: two strand}, we can apply $\HH^0=\Hom(R,-)$ term-wise and obtain: 
$$
\Hom(R, T_i^k)\cong [\underbrace{R(-2k) \to R(-2k+2)\to \cdots \to R(-2) \to \underline{R}}_{k+1}],
$$ 
\noindent cf. Remark \ref{rmk:hom grading}. The differentials in $T_i^{k}$ alternate between $x_i-x'_i$ and $x_i-x'_{i+1}$ (so that $(x_i-x'_i)(x_i-x'_{i+1})=0$ as in Lemma \ref{lem: problem 2}), hence the differentials in $\Hom(R, T_i^k)$ alternate between $x_i-x_i=0$ and $x_i-x_{i+1}$. For example,
$$
\Hom(R,T_i^2)=[R(-4)\xrightarrow{0}R(-2)\xrightarrow{x_i-x_{i+1}} \underline{R}]
$$
and
$$
\Hom(R,T_i^3)=[R(-6)\xrightarrow{x_i-x_{i+1}}R(-4)\xrightarrow{0} R(-2)\xrightarrow{x_i-x_{i+1}}\underline{R}]
$$
One can easily compute the homology of the resulting complex and obtain that the Poincar\'e polynomials of the $A = 0$ part of $\HHH(T(2,2))$  and $\HHH(T(2,3))$ are
$$
\HHH^{A=0}(T(2,2)) = \frac{Q^{4}T^{-2}}{(1 - Q^2)^2} + \frac{1}{1 - Q^2}, \quad \HHH^{A=0}(T(2,3)) = \frac{1 + Q^4T^{-2}}{1 - Q^2}
$$
\end{example}


\subsection{Recursions and parity}

As one can see from the definition, the complex $T_{\beta}$ grows exponentially in the number of crossings in a braid, which quickly makes the direct computation of Khovanov-Rozansky homology, even with a help of a computer, unfeasible. This was a major stumbling block in link homology for over a decade until significant progress was obtained in a series of papers of Elias, Hogancamp and Mellit \cite{EH,Hog2,Mellit,HM}. This culminates in the following:

\begin{theorem}[\cite{HM}]
\label{thm: parity}
The Khovanov-Rozansky homology of all {\bf positive} torus links $T(m,n)$ is supported in {\bf even} homological degrees and the corresponding Poincar\'e polynomial can be computed using an explicit recursion.
\end{theorem}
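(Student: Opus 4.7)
My plan is to prove both assertions simultaneously by induction on the number of strands $n$, with the base cases $n = 1, 2$ handled by direct computation along the lines of Example \ref{ex: two strand homology}. Since $T(m,n)$ is the closure of the torus braid $\beta_{m,n} = (\sigma_1 \cdots \sigma_{n-1})^m$, and the full twist $\FT_n = \beta_{n,n}$ is central in $\operatorname{Br}_n$, its Rouquier complex $T_{\FT_n}$ commutes up to homotopy with every other Rouquier complex on $n$ strands. I would exploit this centrality as the engine of the recursion: one has $T_{\beta_{m,n}} \simeq T_{\beta_{m-n, n}} \otimes_R T_{\FT_n}$ whenever $m \geq n$, so the task is to understand the action of $T_{\FT_n}$ on sufficiently many objects.

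Following the strategy of Elias-Hogancamp, I would then construct, for each $k \leq n$, a homotopy idempotent $P_k \in \CK_n$ categorifying the symmetrizer on the first $k$ strands. Two properties of these projectors are essential: first, each $P_k$ should be an eigenobject for the appropriate full twist, in the sense that $T_{\FT_k} \otimes_R P_k \simeq Q^{a_k} T^{b_k} P_k$ for explicit $a_k \in \Z$ and \emph{even} $b_k \in 2\Z$; second, the identity bimodule $R$ should admit a (possibly infinite, but convergent) resolution whose associated graded is a direct sum of shifts of the $P_k$, with all homological shifts even. Granting this, the proof of parity and of the recursion runs in parallel: applying the Hochschild cohomology functors $\HH^i(-)$ termwise to the resolution expresses $\HHH(T(m,n))$ as a sum of contributions $\HH^i(T_{\beta_{m,n}} \otimes_R P_k)$, each of which is computed by induction on $n$ after pushing $T_{\FT_n}$ through $P_k$ using the eigenvalue equation and stabilizing to reduce the strand count by a Markov move. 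Parity propagates because every ingredient (grading shifts, eigenvalues, and inductive inputs) lives in even $T$-degree.

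The main obstacle will be constructing the projectors $P_k$ as honest complexes of Soergel bimodules and verifying the eigenvalue equation with the correct parity of $b_k$. The construction is itself inductive and relies on delicate vanishing statements for odd-degree $\operatorname{Ext}$-groups between certain Soergel bimodules, together with an explicit description of $\HH^\ast(P_k)$ as a polynomial ring on which $\FT_k$ acts by a Laurent monomial. A secondary subtlety is that the spectral sequence associated to the $P_k$-filtration of $R$ must degenerate: this is ensured by parity, but the logic is somewhat circular and must be unwound carefully at each step of the induction. Finally, turning the argument into an \emph{explicit} recursion rather than a mere existence statement requires pinning down the shifts $a_k, b_k$ and the multiplicities in the $P_k$-expansion of $R$, which is the combinatorial heart of the Hogancamp-Mellit theorem and the point at which the proof stops being formal.
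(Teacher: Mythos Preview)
Your outline follows the Elias--Hogancamp philosophy \cite{EH} of categorified projectors as full-twist eigenobjects, and while that can be made to work, the paper takes the sharper Hogancamp--Mellit route \cite{HM}, and the difference matters. Instead of infinite idempotents $P_k$, the paper uses Hogancamp's \emph{finite} complexes $K_n$ \cite{Hog}, whose defining feature is that they absorb \emph{every single crossing}: $T_i \otimes_R K_n \simeq K_n$ for all $i$. This is strictly stronger than the full-twist eigenvalue property you invoke, and it is precisely why the recursion runs on individual letters of the braid word rather than on powers of $\FT_n$. Combined with an explicit exact triangle expressing a strand threaded around $K_n$ in terms of $t^{-n}K_{n+1}$ and $q\cdot(K_n \sqcup 1)$, together with the identity that closing off the last strand of $K_{n+1}$ yields $(t^n+a)K_n$, one obtains the binary-sequence recursion $p(v,w)$ of Theorem~\ref{th: torus knot recursions} directly, with no infinite complexes and no convergence or circularity worries: every step is a genuine short exact sequence whose outer terms are even by induction on word length, so the splitting lemma applies cleanly.

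Your version would also need patching at one point: $P_k$ on the first $k$ strands is an eigenobject for $\FT_k$, not for $\FT_n$, so ``pushing $T_{\FT_n}$ through $P_k$'' does not follow from the eigenvalue equation alone---the remaining crossings connecting the first $k$ strands to the rest do not commute past $P_k$ for free. What one actually needs is the crossing-absorption property of the $K_n$ (or an equivalent refinement of the projector picture) to slide those $\sigma_i$'s across. Once that is in place the two approaches essentially converge, but the $K_n$ formulation gets there without the detour through infinite objects and gives the explicit recursion immediately.
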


\begin{example}
The Poincar\'e polynomial for the $A=0$ part of $\HHH(T(n,n+1))$ is given by the $q,t$--Catalan polynomial defined by Garsia and Haiman in \cite{GaHa}.
\end{example}

\begin{remark}
Theorem \ref{thm: parity} confirms a series of conjectures about the combinatorics of $\HHH(T(m,n))$ proposed in \cite{G,GN,GORS,ORS}.
\end{remark}

Let us describe the idea behind the proof of this theorem. Hogancamp in \cite{Hog} observed that for each $n$ there exists a complex of Soergel bimodules $K_n$ satisfying the following relations:

\begin{center}
\begin{tikzpicture}
\draw (-1,1.25) node {$(1)$};
\draw (0,0)--(0,1)--(-0.4,1)--(-0.4,1.5)--(0.4,1.5)--(0.4,1)--(0,1);
\draw (0,1.5)--(0,2.5);
\draw (0,1.25) node {$K_1$};
\draw (1,1.25) node {=};
\draw (1.5,0)--(1.5,2.5);
\end{tikzpicture}
\quad
\begin{tikzpicture}
\draw (-1,1.25) node {(2)};
\draw  (-0.5,1)--(-0.5,1.5)--(0.5,1.5)--(0.5,1)--(-0.5,1);
\draw (-0.4,0)--(-0.4,1);
\draw (0,0.5) node {$\cdots$};
\draw (0.4,0)--(0.4,1);
\draw (0,1.25) node {$K_n$};
\draw (-0.4,1.5)--(-0.4,2.5);
\draw (0.4,1.5)--(0.4,2.5);
\draw (-0.2,2.5)..controls (-0.2,2) and (0.2,2)..(0.2,1.5);
\draw[white,line width=3] (-0.2,1.5)..controls (-0.2,2) and (0.2,2)..(0.2,2.5);
\draw (-0.2,1.5)..controls (-0.2,2) and (0.2,2)..(0.2,2.5);
\draw (1,1.25) node {=};
\draw  (1.5,1)--(1.5,1.5)--(2.5,1.5)--(2.5,1)--(1.5,1);
\draw (1.6,0)--(1.6,1);
\draw (2,0.5) node {$\cdots$};
\draw (2.4,0)--(2.4,1);
\draw (2,1.25) node {$K_n$};
\draw (1.6,1.5)--(1.6,2.5);
\draw (2.4,1.5)--(2.4,2.5);
\draw (2,2) node {$\cdots$};
\draw (3,1.25) node {=};
\draw  (3.5,1)--(3.5,1.5)--(4.5,1.5)--(4.5,1)--(3.5,1);
\draw (3.6,0)--(3.6,1);
\draw (4,2) node {$\cdots$};
\draw (4.4,0)--(4.4,1);
\draw (4,1.25) node {$K_n$};
\draw (3.6,1.5)--(3.6,2.5);
\draw (4.4,1.5)--(4.4,2.5);
\draw (3.8,1)..controls (3.8,0.5) and (4.2,0.5)..(4.2,0);
\draw[white,line width=3] (3.8,0)..controls (3.8,0.5) and (4.2,0.5)..(4.2,1);
\draw (3.8,0)..controls (3.8,0.5) and (4.2,0.5)..(4.2,1);
\end{tikzpicture}
\quad
\begin{tikzpicture}
\draw (-1,1.25) node {(3)};
\draw  (-0.5,1)--(-0.5,1.5)--(0.5,1.5)--(0.5,1)--(-0.5,1);
\draw (-0.4,0)--(-0.4,1);
\draw (-0.1,0.5) node {$\cdots$};
\draw (-0.1,2) node {$\cdots$};
\draw (0,1.25) node {$K_{n+1}$};
\draw (-0.4,1.5)--(-0.4,2.5);
\draw (0.4,1.5)..controls (0.4,2) and (1,2)..(1,1.5);
\draw (0.4,1)..controls (0.4,0.5) and (1,0.5)..(1,1);
\draw (0.2,0)--(0.2,1);
\draw (0.2,1.5)--(0.2,2.5);
\draw (1,1)--(1,1.5);
\draw (2,1.25) node {$=(t^n+a)$};
\draw  (3,1)--(3,1.5)--(4,1.5)--(4,1)--(3,1);
\draw (3.2,0)--(3.2,1);
\draw (3.8,0)--(3.8,1);
\draw (3.5,0.5) node {$\cdots$};
\draw (3.5,2) node {$\cdots$};
\draw (3.5,1.25) node {$K_{n}$};
\draw (3.2,1.5)--(3.2,2.5);
\draw (3.8,1.5)--(3.8,2.5);
\end{tikzpicture}
\end{center}

\begin{center}
\begin{tikzpicture}
\draw (-1.5,1.25) node {(4)};
\draw  (-0.5,1)--(-0.5,1.5)--(0.5,1.5)--(0.5,1)--(-0.5,1);
\draw (0,1.25) node {$K_{n}$};
\draw  (-0.7,2.5)--(-0.7,1);
\draw (-0.4,1.5)--(-0.4,2.5);
\draw (0.4,1.5)--(0.4,2.5);
\draw  (-0.7,1)..controls (-0.7,0.5) and (0.7,0.5)..(0.7,0);
\draw[white,line width=3] (-0.4,1)--(-0.4,0);
\draw [white,line width=3]  (0.4,1)--(0.4,0);
\draw  (-0.4,1)--(-0.4,-1);
\draw    (0.4,1)--(0.4,-1);
\draw [white,line width=3]    (0.7,0)..controls (0.7,-0.5) and (-0.7,-0.5)..(-0.7,-1);
\draw  (0.7,0)..controls (0.7,-0.5) and (-0.7,-0.5)..(-0.7,-1);

\draw (1,1.25) node {$=t^{-n}$};
\draw (1.7,2)..controls (1.5,1) and (1.5,1)..(1.7,0);
\draw  (2,1)--(2,1.5)--(3,1.5)--(3,1)--(2,1);
\draw (2.5,1.25) node {$K_{n+1}$};
\draw (2.1,1.5)--(2.1,2.5);
\draw (2.3,1.5)--(2.3,2.5);
\draw (2.9,1.5)--(2.9,2.5);
\draw (2.1,1)--(2.1,-1);
\draw (2.3,1)--(2.3,-1);
\draw (2.9,1)--(2.9,-1);
\draw [->] (3.5,1.25)--(4.3,1.25);
\draw (4.5,1.25) node {$q$};
\draw  (5,1)--(5,1.5)--(6,1.5)--(6,1)--(5,1);
\draw (5.5,1.25) node {$K_{n}$};
\draw (5.1,1.5)--(5.1,2.5);
\draw (5.9,1.5)--(5.9,2.5);
\draw (5.1,1)--(5.1,-1);
\draw (5.9,1)--(5.9,-1);
\draw (4.7,2.5)--(4.7,-1);
\draw (6.5,2)..controls (6.7,1) and (6.7,1)..(6.5,0);
\end{tikzpicture}
\end{center}
where throughout the notes we use the change of variables
\begin{equation}
\label{eq: change of variables}
    q=Q^2,\quad t=T^2Q^{-2},\quad a=AQ^{-2}
\end{equation}.

These pictures should be read in the following way: each picture
\begin{center}
\begin{tikzpicture}
\draw  (0,1)--(0,1.5)--(1,1.5)--(1,1)--(0,1);
\draw (0.2,0)--(0.2,1);
\draw (0.8,0)--(0.8,1);
\draw (0.5,0.5) node {$\cdots$};
\draw (0.5,2) node {$\cdots$};
\draw (0.5,1.25) node {$K_{n}$};
\draw (0.2,1.5)--(0.2,2.5);
\draw (0.8,1.5)--(0.8,2.5);
\end{tikzpicture}
\end{center}
corresponds to a (bounded) complex of Soergel bimodules, and stacking crossings on top (resp. bottom) of this picture means tensoring by the corresponding Rouquier complex on the right (resp. left). The closing of the last strand in (3) denotes taking $\HH_{\C[x_{n+1}]}(K_{n+1})$, which yields a complex of Soergel bimodules in one variable less, identified with a shifted sum of the complexes $K_n$.

\bigskip

\begin{example}
Let us check that the complex  $K_2=[R(-4)\buildrel{\Delta}\over\to B(-3)\buildrel{x_{i} - x'_{i}} \over \to B(-1)\buildrel m \over\to R]$ satisfies the above properties, in particular, eats crossings. 
Indeed, $K_2=[T^{-1}(-4)\to T]$, so thanks to Lemma \ref{lem: tt-1} and Example \ref{ex: two strand} we have
$$
TK_2=[R(-4)\to T^2]=[R(-4)\to B(-3)\to B(-1)\to R] = K_2.
$$
One can check that the differentials agree with this computation. 

Alternatively, one can check that $K_2$, as a complex of vector spaces (or left $R$-modules) is acyclic. If we tensor it with $B$, we get an acyclic complex of free $B$-modules, so it must be contractible. Therefore $BK_2\simeq 0$ and $TK_2\simeq K_2$.

\begin{example}\label{ex: t22}
Continuing with Example \ref{ex: two strand homology}, the Poincar\'e polynomials of $\HHH^{0}(T(2,2))$, $\HHH^{0}(T(2,3))$ in the $q, t$-variables become
$$
\HHH^{0}(T(2,2)) = \frac{qt^{-1}}{(1 - q)^2} + \frac{1}{1-q}=t^{-1}\frac{q+t-qt}{(1-q)^2}, \quad \HHH^{0}(T(2,3)) = \frac{1 + qt^{-1}}{1 - q}=t^{-1}\frac{q+t}{1-q}.
$$
In general, one can check that for two-stranded torus \emph{knots}:
$$
\HHH^{0}(T(2,2k+1)) = \frac{1 + qt^{-1} + q^{2}t^{-2} + \cdots + q^{k}t^{-k}}{1 - q}.
$$
Note that (up to a power of $t$ which corresponds to an overall grading shift) the Poincar\'e polynomials are given by rational functions where the numerator is symmetric in $q$ and $t$, and the denominator is a power of $(1-q)$. The symmetry between $q$ and $t$ is much less clear in variables $Q$ and $T$, which is one of the motivations behind the change of variables \eqref{eq: change of variables}. See also Theorem \ref{th: tautological} below.
\end{example}



\end{example}

In the following section we show how to use the properties of $K_n$ to recursively compute the link homology. Note that any combination of $a,q,t$ has even (homological) $T$-degree. The recursion would follow from the repeated simplification of the braid diagram using the above relations and the following standard lemma:

\begin{lemma}
Suppose that we have an exact triple of complexes 
$$
0\to A_{\bullet}\to B_{\bullet}\to C_{\bullet}\to 0
$$
and the homology of both $A_{\bullet}$ and $C_{\bullet}$ is supported in even homological degrees. Then the homology of $B_{\bullet}$ is supported in even homological degrees and 
$$
H_k(B_{\bullet})=H_k(A_{\bullet})\oplus H_k(C_{\bullet})\ \text{for all}\ k.
$$
\end{lemma}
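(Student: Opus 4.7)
The plan is to run the standard long exact sequence in homology associated to a short exact sequence of complexes,
\[
\cdots \to H_{k+1}(C_\bullet) \xrightarrow{\partial} H_k(A_\bullet) \to H_k(B_\bullet) \to H_k(C_\bullet) \xrightarrow{\partial} H_{k-1}(A_\bullet) \to \cdots,
\]
and to exploit the parity hypothesis to kill the connecting maps. Granted this, the long exact sequence breaks into short exact pieces, and the desired conclusion follows by a two-case analysis on the parity of $k$.

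The crucial step is to observe that the connecting homomorphism $\partial \colon H_k(C_\bullet) \to H_{k-1}(A_\bullet)$ drops homological degree by one, so the two homology groups it connects sit in degrees of opposite parity. Since $H_\bullet(A_\bullet)$ and $H_\bullet(C_\bullet)$ are both supported in even degrees, at least one endpoint of each connecting map is zero, so every $\partial$ vanishes. Consequently the long exact sequence decomposes into short exact sequences
\[
0 \to H_k(A_\bullet) \to H_k(B_\bullet) \to H_k(C_\bullet) \to 0
\]
for every $k$.

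From here the conclusion is essentially immediate. For odd $k$ the outer terms vanish, forcing $H_k(B_\bullet) = 0$, which gives the parity statement. For even $k$ the sequence is a short exact sequence of $\C$-vector spaces (since throughout the notes we work over $\C$, and the complexes in play are complexes of graded vector spaces in the end), hence it splits, yielding the graded isomorphism $H_k(B_\bullet) \simeq H_k(A_\bullet) \oplus H_k(C_\bullet)$.

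There is no real obstacle to carrying out this plan; the only mild subtlety is that splitting of the short exact sequences is in general non-canonical, but because the lemma is applied only to extract Poincar\'e polynomials (for which additivity over short exact sequences is all one needs), this is harmless. The proof is therefore a clean exercise in the long exact sequence, with parity doing all the work.
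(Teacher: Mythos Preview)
Your proof is correct and is exactly the standard argument the paper has in mind; in fact the paper states this lemma without proof, calling it ``standard,'' so your long-exact-sequence-plus-parity argument simply fills in the omitted details.
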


For the reader's convenience, we also write an explicit recursion from \cite{HM}. By a {\bf binary sequence} we mean a (possibly empty) sequence of $0$'s and $1$'s. If $v = v_1\dots v_{\ell}$ is a binary sequence, we denote $|v| := \sum v_{i}$, the number of $1$'s appearing in $v$. 

\begin{theorem}[\cite{HM}]
\label{th: torus knot recursions}
Let $v$ and $w$ be a pair of binary sequences with $|v| = |w|$.  Let $p(v, w)$ denote the
unique family of polynomials, indexed by such pairs of binary sequences, satisfying
\begin{itemize}
    \item [(1)] 
    $p(\emptyset, 0^n) = \left(\frac{1+a}{1-q}\right)^n$
and $p(0^m, \emptyset) =\left(\frac{1+a}{1-q}\right)^m$
 \item [(2)] $p(v1, w1) = (t^{\ell} + a)p(v, w)$, where $|v| = |w| =\ell$.
\item [(3)] $p(v0, w1) = p(v, 1w).$
\item[(4)] $p(v1, w0) = p(1v, w).$
\item[(5)] $p(v0,w0) = t^{-\ell}p(1v,1w)+ qt^{-\ell}p(0v,0w)$, where $|v| = |w| = \ell$.
 \end{itemize}
Then the triply graded Khovanov-Rozansky homology of $T(m, n)$ is free over $\Z$ of graded rank
$p(0^m, 0^n)$.
\end{theorem}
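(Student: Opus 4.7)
The plan is to realize each polynomial $p(v,w)$ as the triply graded Poincar\'e series of an explicit object $D(v,w)$ in the homotopy category of Soergel bimodules built from Hogancamp's projectors $K_n$. In the diagrammatic setup I have in mind, a $1$ at position $k$ of a sequence records that the corresponding strand has already been absorbed into a $K_{|v|+1}$ (or $K_{|w|+1}$), while a $0$ records a still-unresolved strand of the underlying torus braid; by construction, $D(0^m,0^n)$ then computes $\HHH(T(m,n))$ up to the Markov-stabilization ambiguity noted after Theorem~\ref{th: rouquier}. It would then suffice to show that the Poincar\'e polynomials of the $D(v,w)$ satisfy the five relations, since these determine the family uniquely via a termination argument: rules~(3)--(4) move $1$'s to the front of the opposite sequence, rule~(2) strips them off, and rule~(5) strictly decreases the number of trailing $0$'s, so every $(v,w)$ reduces to a base case.

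For the base cases, once every strand on one side has been absorbed by a projector, iterating property~(3) of the pictures of $K_n$ produces a product $\prod_{k=0}^{n-1}(t^k+a)$ times the Hochschild homology of $K_1=R$. Dividing by the $n$ factors of $(1-q)^{-1}$ coming from the free polynomial algebra (one per remaining variable) and applying the change of variables~\eqref{eq: change of variables} gives $((1+a)/(1-q))^n$, and symmetrically for $p(0^m,\emptyset)$. Rule~(2) is again a direct application of property~(3), which contributes the factor $(t^\ell+a)$ whenever both sequences end in $1$. Rules~(3) and~(4) are instances of the absorbing property~(2) of $K_n$: a crossing can be pushed through a projector at the cost of relocating from the end of one sequence to the beginning of the other, which is consistent with the Markov-conjugation invariance of $\HHH$.

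Rule~(5) is the heart of the recursion and comes from property~(4): when both sequences end in $0$, one strand is still unabsorbed, and property~(4) exhibits $D(v0,w0)$ as the middle of a short exact sequence whose outer terms are $D(1v,1w)$ and $D(0v,0w)$, with the grading shifts producing precisely the coefficients $t^{-\ell}$ and $qt^{-\ell}$. To pass from this triangle to an \emph{additivity} statement for Poincar\'e polynomials, rather than the weaker statement for Euler characteristics, one must invoke the parity of Theorem~\ref{thm: parity}: once both outer terms are known to sit in even homological degree, the long exact sequence on homology splits into short exact sequences of graded vector spaces, and the Poincar\'e polynomials add.

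The main obstacle is precisely this parity input, which cannot be cited as a black box because the intermediate objects $D(v,w)$ are not closures of ordinary braids. I would therefore prove parity and the recursion simultaneously, by induction on a monovariant such as the total number of trailing $0$'s in $(v,w)$: one checks that rules (1), (2), (3), (4) each produce polynomials with non-negative integer coefficients in $a,q,t$ (all of even homological $T$-degree via~\eqref{eq: change of variables}), and then the splitting of the triangle in rule~(5) follows from the inductively assumed parity of $D(1v,1w)$ and $D(0v,0w)$, which in turn certifies parity of $D(v0,w0)$. With parity propagated along the induction, the Poincar\'e polynomials are genuinely additive at every step, so $p(0^m,0^n)$ is the graded rank of $\HHH(T(m,n))$ as claimed.
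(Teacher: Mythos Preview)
Your overall architecture---constructing complexes $D(v,w)$ from the projectors $K_\ell$, verifying that their Poincar\'e series satisfy the five relations, and proving parity simultaneously so that the triangle in rule~(5) splits---is the approach of \cite{HM} and matches the sketch given here. Two concrete pieces of your execution are wrong, however, and the first indicates that your picture of $D(v,w)$ is not yet correct.

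First, the base case. In $p(\emptyset,0^n)$ we have $\ell=|v|=|\emptyset|=0$, so there is \emph{no projector present}: the object $D(\emptyset,0^n)$ consists of $n$ disjoint trivial strands, whose closure is the $n$-component unlink with homology $((1+a)/(1-q))^n$. Your product $\prod_{k=0}^{n-1}(t^k+a)$ is instead what one obtains by iterating rule~(2) on $p(1^n,1^n)$ down to $p(\emptyset,\emptyset)=1$, i.e., the full trace of $K_n$; this is a different object and does not equal $((1+a)/(1-q))^n$ for $n\ge 2$. The correct picture is that the projector in $D(v,w)$ has rank $\ell=|v|=|w|$, and the $0$'s in $v$ and $w$ record free strands of the ambient torus braid threaded alongside it on the two sides.

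Second, your termination monovariant fails: applying rule~(5) to $(0^m,0^n)$ reproduces $(0^m,0^n)$ in its second summand (since $0\cdot 0^{m-1}=0^{m-1}\cdot 0$), so the number of trailing zeros does not strictly decrease. What actually happens is that rules~(2)--(4) strictly decrease total length, while the $(0v,0w)$ branch of rule~(5) cyclically rotates both sequences by one position; after finitely many rotations one either returns to the starting pair and solves algebraically (e.g.\ $(1-q)p(0^m,0^n)=p(10^{m-1},10^{n-1})$), or reaches a pair whose last entries are not both $0$, whereupon a length-decreasing rule applies. Relatedly, rules~(3)--(4) do not move a $1$ to the \emph{opposite} sequence: $p(v0,w1)=p(v,1w)$ shortens $v$ by one and rotates the $1$ within $w$ itself.
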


\subsection{Examples}

\begin{example}
\label{ex: two strand recursion}
Let us use the recursion to compute the homology of two-strand torus links. We can write

\begin{center}
\begin{tikzpicture}
\draw (1,0)--(0,1);
\draw [line width=3, white] (0,0)--(1,1);
\draw (0,0)--(1,1);
\draw (1,1)--(0,2);
\draw [line width=3, white] (0,1)--(1,2);
\draw (0,1)--(1,2);
\draw (1.5,1) node {$=$};
\draw (3,0)--(2,1);
\draw [line width=3, white] (2,0)--(3,1);
\draw (2,0)--(3,1);
\draw (3,1)--(2,2);
\draw [line width=3, white] (2,1)--(3,2);
\draw (2,1)--(3,2);
\filldraw [white] (1.8,0.7)--(1.8,1.3)--(2.2,1.3)--(2.2,0.7)--(1.2,0.7); 
\draw (1.8,0.7)--(1.8,1.3)--(2.2,1.3)--(2.2,0.7)--(1.8,0.7); 
\draw (2,1) node {\scriptsize $K_1$};
\draw (3.5,1) node {$=t^{-1}$};
\draw (4,0.7)--(4,1.3)--(5,1.3)--(5,0.7)--(4,0.7);
\draw (4.5,1) node {\scriptsize $K_2$};
\draw (4.2,0)--(4.2,0.7);
\draw (4.2,1.3)--(4.2,2);
\draw (4.8,0)--(4.8,0.7);
\draw (4.8,1.3)--(4.8,2);
\draw [->] (5,1)--(5.5,1);
\draw (6,1) node {$qt^{-1}$};
\draw (6.5,0)--(6.5,2);
\draw (7,0)--(7,0.7);
\draw (7,1.3)--(7,2);
\draw (6.8,0.7)--(6.8,1.3)--(7.2,1.3)--(7.2,0.7)--(6.8,0.7);
\draw (7,1) node {\scriptsize $K_1$};
\end{tikzpicture}
\end{center}
The first term evaluates to $t^{-1}(t+a)(1+a)$ while the second term evaluates to $t^{-1}q(1+a)^2$. Since both of them have even homological shifts, the differential vanishes, and the Poincar\'e polynomial equals
$$
\frac{t^{-1}(t+a)(1+a)}{1-q}+\frac{t^{-1}q(1+a)^2}{(1-q)^2}=\frac{t^{-1}(1+a)}{(1-q)^2}(t+a-qt-aq+q+aq)=
$$
$$
\frac{t^{-1}(1+a)}{(1-q)^2}(t+q-qt+a)=\frac{t^{-1}(1+a)}{(1-q)}(t+\frac{q}{1-q}+\frac{a}{1-q}).
$$
We can apply the same relation with $T(2,m)$ braid added below. On the left we get $T(2,m+2)$ while on the right we get $T(2,m)$ followed by $K_2$ (which is the same as $K_2$ since it eats crossings) and $T(2,m)$. By induction in $m$, we conclude that for all positive $m$ the homology of $T(2,m)$ is supported in even homological degrees, and the Poincar\'e polynomial is given by 
$$
\HHH(T(2,m+2))=\frac{t^{-1}(t+a)(1+a)}{1-q}+qt^{-1}
\HHH(T(2,m))
$$
This recursion is easy to solve with the initial conditions
$$
\HHH(T(2,0))=\frac{(1+a)^2}{(1-q)^2},\ \HHH(T(2,1))=\frac{(1+a)}{(1-q)}.
$$
One can compare this with Example \ref{ex: two strand homology}.
\end{example}

\begin{example}
For a more complicated example, let us compute the homology of the $(3,3)$ torus link. Consider the braid

\begin{center}
\begin{tikzpicture}
\draw (-1,1) node {$L_3=$};
\draw (1,0)--(0,1);
\draw [line width=3,white] (0,0)--(0.5,1);
\draw (0,0)--(0.5,1)--(0,2);
\draw [line width=3,white] (0.5,0)--(1,1);
\draw (0.5,0)--(1,1)--(0.5,2);
\draw [line width=3,white] (0,1)--(1,2);
\draw (0,1)--(1,2);
\end{tikzpicture}
\end{center}

It is easy to see that $T(3,3)=T(2,2)\cdot L_3$. By applying the recursion from Example \ref{ex: two strand recursion} to $T(2,2)$, we resolve $T(3,3)$ by $t^{-1}K_2L_3$ and $t^{-1}qL_3$. By the main recursion, $K_2L_3$ is resolved by $t^{-2}K_3$ and $t^{-2}q(K_2\sqcup 1)$. To resolve $L_3$, we can write

\begin{center}
\begin{tikzpicture}
\draw (-1,1) node {$L_3=$};
\draw (1,0)--(0,1);
\draw [line width=3,white] (0,0)--(0.5,1);
\draw (0,0)--(0.5,1)--(0,2);
\draw [line width=3,white] (0.5,0)--(1,1);
\draw (0.5,0)--(1,1)--(0.5,2);
\draw [line width=3,white] (0,1)--(1,2);
\draw (0,1)--(1,2);
\draw (1.5,1) node {$=t^{-1}$};
\draw (2,0.7)--(2,1.3)--(3,1.3)--(3,0.7)--(2,0.7);
\draw (2.5,1) node {\scriptsize $K_2$};
\draw (2.2,0)--(2.2,0.7);
\draw (2.2,1.3)--(2.2,2);
\draw (3.2,0)--(2.8,0.7);
\draw [line width=3,white] (2.8,0)--(3.2,0.7);
\draw (2.8,0)--(3.2,0.7);
\draw (3.2,1.3)--(2.8,2);
\draw [line width=3,white] (2.8,1.3)--(3.2,2);
\draw (2.8,1.3)--(3.2,2);
\draw (3.2,0.7)--(3.2,1.3);
\draw [->] (3.5,1)--(4,1);
\draw (4.5,1) node {$qt^{-1}$};
\draw (5,0)--(5,2);
\draw (6,0)--(5.5,1);
\draw [line width=3,white] (5.5,0)--(6,1);
\draw (5.5,0)--(6,1);
\draw (6,1)--(5.5,2);
\draw [line width=3,white] (5.5,1)--(6,2);
\draw (5.5,1)--(6,2);
\end{tikzpicture}
\end{center}
Finally, we can use the recursion to write
\begin{center}
\begin{tikzpicture}
\draw (2,0.7)--(2,1.3)--(3,1.3)--(3,0.7)--(2,0.7);
\draw (2.5,1) node {\scriptsize $K_2$};
\draw (2.2,0)--(2.2,0.7);
\draw (2.2,1.3)--(2.2,2);
\draw (3.2,0)--(2.8,0.7);
\draw [line width=3,white] (2.8,0)--(3.2,0.7);
\draw (2.8,0)--(3.2,0.7);
\draw (3.2,1.3)--(2.8,2);
\draw [line width=3,white] (2.8,1.3)--(3.2,2);
\draw (2.8,1.3)--(3.2,2);
\draw (3.2,0.7)--(3.2,1.3);
\draw (2.2,0)..controls (2.2,-0.5) and (1.8,-0.5)..(1.8,0);
\draw (2.2,2)..controls (2.2,2.5) and (1.8,2.5)..(1.8,2);
\draw (1.8,0)--(1.8,2);
\draw (4,1) node {$=(t+a)$};
\draw (6,0)--(5,1);
\draw [line width=3, white] (5,0)--(6,1);
\draw (5,0)--(6,1);
\draw (6,1)--(5,2);
\draw [line width=3, white] (5,1)--(6,2);
\draw (5,1)--(6,2);
\filldraw [white] (4.8,0.7)--(4.8,1.3)--(5.2,1.3)--(5.2,0.7)--(5.2,0.7); 
\draw (4.8,0.7)--(4.8,1.3)--(5.2,1.3)--(5.2,0.7)--(4.8,0.7); 
\draw (5,1) node {\scriptsize $K_1$};
\end{tikzpicture}
\end{center}
which we already computed. As a result, we proved that $\HHH(T(3,3))$ is supported in even homological degrees, and refer the reader to \cite{EH,HM} for the final answer. 
\end{example}

\begin{example}
\label{ex: T34 homology}
A similar computation yields the following Khovanov-Rozansky homology of $T(3,4)$ torus knot, first computed in \cite{DGR}:
\begin{center}
    \begin{tikzpicture}
        \draw (0,0) node {$\bullet$};
        \draw (2,0) node {$\bullet$};
        \draw (3,0) node {$\bullet$};
        \draw (4,0) node {$\bullet$};
        \draw (6,0) node {$\bullet$};
        \draw (1,1) node {$\bullet$};
        \draw (2,1) node {$\bullet$};
        \draw (3,1) node {$\bullet$};
        \draw (4,1) node {$\bullet$};
        \draw (5,1) node {$\bullet$};
        \draw (3,2) node {$\bullet$};
        \draw[->] (-1,-1)--(7,-1);
        \draw [->] (-1,-1)--(-1,3);
        \draw (7,-1.2) node {$Q$};
        \draw (-1.2,3) node {$A$};
        \draw (0,-0.2) node {\scriptsize $0$};
        \draw (2,-0.2) node {\scriptsize $2$};
        \draw (3,-0.2) node {\scriptsize $4$};
        \draw (4,-0.2) node {\scriptsize $4$};
        \draw (6,-0.2) node {\scriptsize $6$};
        \draw (1,0.8) node {\scriptsize $3$};
        \draw (2,0.8) node {\scriptsize $5$};
        \draw (3,0.8) node {\scriptsize $5$};
        \draw (4,0.8) node {\scriptsize $7$};
        \draw (5,0.8) node {\scriptsize $7$};
        \draw (3,1.8) node {\scriptsize $8$};
        \draw (0,-0.9)--(0,-1.1);
        \draw (1,-0.9)--(1,-1.1);
        \draw (2,-0.9)--(2,-1.1);
        \draw (3,-0.9)--(3,-1.1);
        \draw (4,-0.9)--(4,-1.1);
        \draw (5,-0.9)--(5,-1.1);
        \draw (6,-0.9)--(6,-1.1);
        \draw (0,-1.3) node {\scriptsize $-6$};
        \draw (1,-1.3) node {\scriptsize $-4$};
        \draw (2,-1.3) node {\scriptsize $-2$};
        \draw (3,-1.3) node {\scriptsize $0$};
        \draw (4,-1.3) node {\scriptsize $2$};
        \draw (5,-1.3) node {\scriptsize $4$};
        \draw (6,-1.3) node {\scriptsize $6$};
        \draw (-1.1,0)--(-0.9,0);
        \draw (-1.1,1)--(-0.9,1);
        \draw (-1.1,2)--(-0.9,2);
        \draw (-1.3,0) node {\scriptsize $0$};
        \draw (-1.3,1) node {\scriptsize $1$};
        \draw (-1.3,2) node {\scriptsize $2$};
    \end{tikzpicture}
\end{center}
The homology is 11-dimensional (the generators correspond to the dots in the picture) and concentrated in three $A$-degrees.
The $Q$-degree is marked on a horizontal axis, $A$-degree on the vertical axis and $T$-degree is marked next to the dots.
\end{example}

\section{Braid varieties}

\subsection{Braid varieties and their properties}

In this lecture we describe a geometric model for Khovanov-Rozansky homology of positive braids. Given $1\le i\le n-1$, we consider the matrix
$$
B_i(z)=\left(\begin{matrix}
1     &   0   &  & \cdots &       & 0 \\
0      &\ddots & &        &       & \\
\vdots &      &0 & 1        &       & \vdots\\
       &    &1 & z        &       &  \\
       &    &  &        & \ddots & 0 \\
 0      &    &\cdots   &       &   0     & 1\\
\end{matrix}
\right)
$$
where the nontrivial $2\times 2$ block is at the $i$-th and $(i+1)$-st positions.

\begin{lemma}
\label{lem: matrix braid rel}
$B_i(z_1)B_{i+1}(z_2)B_i(z_3)=B_{i+1}(z_3)B_i(z_2-z_1z_3)B_{i+1}(z_1)$.
\end{lemma}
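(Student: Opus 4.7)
The plan is to reduce the identity to a computation on a $3 \times 3$ block and then verify it by direct multiplication. First I would observe that each of $B_i(z)$ and $B_{i+1}(z)$ agrees with the identity matrix outside of rows/columns $\{i,i+1,i+2\}$ (since $B_i(z)$ differs from the identity only in rows/columns $\{i,i+1\}$, and $B_{i+1}(z)$ only in $\{i+1,i+2\}$). Therefore both sides of the claimed identity are $n \times n$ matrices that are identity outside the $3 \times 3$ principal submatrix indexed by $\{i, i+1, i+2\}$, and it suffices to check the equality after restricting to that submatrix.

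On that submatrix, the relevant matrices are
\[
B_i(z) \rightsquigarrow \begin{pmatrix} 0 & 1 & 0 \\ 1 & z & 0 \\ 0 & 0 & 1 \end{pmatrix}, \qquad B_{i+1}(z) \rightsquigarrow \begin{pmatrix} 1 & 0 & 0 \\ 0 & 0 & 1 \\ 0 & 1 & z \end{pmatrix}.
\]
I would then compute the left-hand side $B_i(z_1)B_{i+1}(z_2)B_i(z_3)$ as a $3 \times 3$ product. A short calculation (multiplying $B_{i+1}(z_2)B_i(z_3)$ first, then applying $B_i(z_1)$ on the left) yields the matrix
\[
\begin{pmatrix} 0 & 0 & 1 \\ 0 & 1 & z_1 \\ 1 & z_3 & z_2 \end{pmatrix}.
\]
Similarly, for the right-hand side $B_{i+1}(z_3)B_i(z_2 - z_1 z_3)B_{i+1}(z_1)$, I would multiply $B_i(z_2 - z_1 z_3)B_{i+1}(z_1)$ first and then apply $B_{i+1}(z_3)$. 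The $(3,3)$-entry is the only one where the substitution $z_2 \mapsto z_2 - z_1 z_3$ is visible, and it produces $(z_2 - z_1 z_3) + z_3 z_1 = z_2$; all other entries match by inspection, giving the same matrix as above.

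There is no real obstacle: the result is a finite verification of six matrix entries, and the only thing to keep track of is that the shift $z_2 - z_1 z_3$ on the right-hand side is precisely what is needed to cancel the extra cross-term produced by the order of multiplication. One could alternatively give a coordinate-free derivation by viewing $B_i(z)$ as an elementary transvection composed with a permutation matrix for $s_i$, but for the purposes of the notes the direct $3 \times 3$ check is both shortest and most transparent.
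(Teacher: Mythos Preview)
Your proposal is correct and takes essentially the same approach as the paper: both reduce to the $3\times 3$ block (the paper phrases this as ``without loss of generality $i=1$'') and verify the identity by explicit matrix multiplication, arriving at the same matrix $\begin{pmatrix} 0 & 0 & 1 \\ 0 & 1 & z_1 \\ 1 & z_3 & z_2 \end{pmatrix}$ on both sides. The only cosmetic difference is the order in which the three factors are associated during the computation.
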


\begin{proof}
Without loss of generality we may work with $3 \times 3$-matrices, $i=1$. The left hand side is
$$
\begin{pmatrix}
 0 & 1 & 0 \\
 1 & z_1 & 0\\
 0 & 0 & 1
\end{pmatrix}
\begin{pmatrix}
 1 & 0 & 0 \\
 0 & 0 & 1\\
 0 & 1 & z_2
\end{pmatrix}
\begin{pmatrix}
 0 & 1 & 0 \\
 1 & z_3 & 0\\
 0 & 0 & 1
\end{pmatrix}=
\begin{pmatrix}
  0 & 0 & 1 \\
 1 & 0 & z_1 \\
 0 & 1 & z_2 
\end{pmatrix}\begin{pmatrix}
 0 & 1 & 0 \\
 1 & z_3 & 0\\
 0 & 0 & 1
\end{pmatrix}
=\begin{pmatrix}
  0 & 0 & 1 \\
 0 & 1 & z_1 \\
 1 & z_3 & z_2 
\end{pmatrix}
$$
and the right hand side is 
$$
\begin{pmatrix}
 1 & 0 & 0 \\
 0 & 0 & 1\\
 0 & 1 & z_3
\end{pmatrix}
\begin{pmatrix}
 0 & 1 & 0 \\
 1 & z_2-z_1z_3 & 0\\
 0 & 0 & 1
\end{pmatrix}
\begin{pmatrix}
 1 & 0 & 0 \\
 0 & 0 & 1\\
 0 & 1 & z_1
\end{pmatrix}
=\begin{pmatrix}
  0 & 1 & 0 \\
 0 & 0 & 1 \\
 1 & z_2-z_1 z_3 & z_3 \\
\end{pmatrix}
\begin{pmatrix}
 1 & 0 & 0 \\
 0 & 0 & 1\\
 0 & 1 & z_1
\end{pmatrix}=
\begin{pmatrix}
  0 & 0 & 1 \\
 0 & 1 & z_1 \\
 1 & z_3 & z_2 
\end{pmatrix}.
$$
\end{proof}

Given a {\bf positive} braid
$$
\beta=\sigma_{i_1}\cdots \sigma_{i_r}
$$
we define, following Mellit \cite{Mellit2}, the {\bf braid matrix}
$$
B_{\beta}(z_1,\ldots,z_r)=B_{i_1}(z_1)\cdots B_{i_r}(z_r).
$$
and the {\bf braid variety}
$$
X(\beta)=\left\{(z_1,\ldots,z_r):B_{\beta}(z_1,\ldots,z_r)\ \text{is upper-triangular}\right\}\subset \C^r.
$$
It follows from Lemma \ref{lem: matrix braid rel} and the obvious equality $B_{i}(z)B_{j}(w) = B_{j}(w)B_{i}(z)$ for $|i - j| > 1$, that $B_{\beta}$ satisfies the braid relations \eqref{eq: braid} up to a change of variables, and the variety $X({\beta})$ does not depend on the braid word for $\beta$ up to isomorphism. Clearly, $X(\beta)$ is an affine algebraic variety in $\C^r$ cut out by $\binom{n}{2}$ equations.

\begin{example}\label{ex: xsigma3}
To describe  $X(\sigma^3)$ we compute
\begin{align*}
\begin{pmatrix}
0 & 1 \\
1 & z_1
\end{pmatrix}
\begin{pmatrix}
0 & 1 \\
1 & z_2
\end{pmatrix}
\begin{pmatrix}
0 & 1 \\
1 & z_3
\end{pmatrix}
&=\begin{pmatrix}
1 & z_2 \\
z_1 & 1 + z_1 z_2
\end{pmatrix}
\begin{pmatrix}
0 & 1 \\
1 & z_3
\end{pmatrix} \\
&= \begin{pmatrix}
z_2 & 1+ z_2z_3 \\
1+z_1z_2 & z_1+z_3+z_1z_2z_3
\end{pmatrix}
\end{align*}
This 
is upper triangular if and only if  $1+z_1z_2=0$, equivalently $z_1\neq 0$, so $X(\sigma^3)=\C^*_{z_1}\times \C_{z_3}$.
\end{example}

\begin{example}\label{ex: sigma4}
Similarly, $X(\sigma^4)$ will be related to the matrix 
\begin{align*}
    \begin{pmatrix}
z_2 & 1+ z_2z_3 \\
1+z_1z_2 & z_1+z_3+z_1z_2z_3
\end{pmatrix}\begin{pmatrix}
0 & 1 \\
1 & z_4
\end{pmatrix}&=\\
\begin{pmatrix}
 1+ z_2z_3 & z_2+z_4+z_2z_3z_4 \\
 z_1+z_3+z_1z_2z_3 & 1+z_1z_2+z_4z_1+z_3z_4+z_1z_2z_3z_4
\end{pmatrix}
\end{align*}
This is upper-triangular if $z_1+z_3+z_1z_2z_3=0$. This is a hypersurface in $\C^3$ times $\C_{z_4}$. Note that we can rewrite this equation as $z_1+z_3(1+z_1z_2)=0$. If $1+z_1z_2=0$ then $z_1=0$, contradiction. Therefore $1+z_1z_2\neq 0$ and $z_3=-\frac{z_3}{1+z_1z_2}$, so that 
$$
X(\sigma^4)=\{1+z_1z_2\neq 0\}\times \C_{z_4}
$$
and hence it is smooth of dimension 3.
\end{example}

\begin{example}\label{ex: xsigma5}
Similarly, $X(\sigma^5)$ corresponds 
to $$\begin{pmatrix}
  z_2+z_4+z_2z_3z_4 & z_2 z_3+z_2z_5+z_2 z_3z_4z_5+ z_4 z_5+1 \\
 1+z_1z_2+z_4z_1+z_3z_4+z_1z_2z_3z_4 & z_2 z_3 z_1+z_2 z_5 z_1+z_2 z_3 z_4 z_5 z_1+z_4 z_5 z_1+z_1+z_3+z_3 z_4 z_5+z_5
\end{pmatrix}$$
This is upper-triangular if 
$$
1+z_1z_2+z_4z_1+z_3z_4+z_1z_2z_3z_4=(1+z_1z_2)+z_4(z_1+z_3+z_1z_2z_3)=0.
$$
Similarly to the previous case, one can check that this hypersurface is isomorphic to the open subset $\{z_1+z_3+z_1z_2z_3\neq 0\}$ and does not depend on $z_5$, so that it is smooth of dimension 4.
\end{example}

Note that in all these examples there is a torus action on $\X(\sigma^k)$ defined by the equation 
$$t.(z_1,z_2,z_3,z_4,z_5)\mapsto (tz_1,t^{-1}z_2,tz_3,t^{-1}z_4,tz_5)$$
This can be generalized as follows.

\begin{lemma}
\label{lem: torus action}
(a) We have $\diag(t_1,\ldots,t_n)\cdot B_i(z)=B_i(z')D$ for some $z'$ and some diagonal matrix $D$. 

\noindent (b) Part (a) defines the action of the torus $T:=(\C^*)^{n-1}$ on the braid variety $X(\beta)$ for any $n$-strand braid $\beta$. 
 
\noindent (c) The torus action on $X(\beta)$ is free if  $\beta$ closes up to a knot (with one component). In this case, we have $$X(\beta)=\left[X(\beta)/T\right]\times T.$$

\end{lemma}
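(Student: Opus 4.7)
The plan is as follows.

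For (a), I will compute both sides in the only nontrivial $2\times 2$ block, at positions $i, i+1$. Left-multiplication by $\diag(\vec t)$ scales rows $i$ and $i+1$, giving the block $\left(\begin{smallmatrix} 0 & t_i \\ t_{i+1} & t_{i+1}z \end{smallmatrix}\right)$, while right-multiplication of $B_i(z')$ by $D = \diag(\vec d)$ scales columns, giving $\left(\begin{smallmatrix} 0 & d_{i+1} \\ d_i & z'd_{i+1} \end{smallmatrix}\right)$. Matching entries forces $z' = (t_{i+1}/t_i)z$ and $D = \diag(s_i\vec t)$, where $s_i$ swaps the $i$-th and $(i{+}1)$-st entries of $\vec t$; the other diagonal entries of $\diag(\vec t)$ are simply pulled past the $1$'s on the diagonal of $B_i(z)$.

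For (b), I would iterate (a) along a braid word $\beta = \sigma_{i_1}\cdots\sigma_{i_r}$ to get
\[
\diag(\vec t)\,B_\beta(\vec z) \;=\; B_\beta(\vec z\,')\,\diag(\pi_\beta^{-1}\vec t),
\]
where $\pi_\beta \in S_n$ is the permutation of $\beta$ and each $z_k'$ is $z_k$ scaled by some character $\lambda_k$ of $(\C^*)^n$. Since the right diagonal factor is invertible, $B_\beta(\vec z)$ is upper triangular iff $B_\beta(\vec z\,')$ is, so $\vec z \mapsto \vec z\,'$ preserves $X(\beta)$. The diagonal subgroup $\{(t,\ldots,t)\} \subset (\C^*)^n$ acts trivially, as it commutes with every $B_i(z)$, so the action descends to $T = (\C^*)^{n-1}$.

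For (c), the key observation is that the diagonal entries $b_j(\vec z) := (B_\beta(\vec z))_{jj}$ give $n$ nowhere-vanishing regular functions on $X(\beta)$. Indeed, $\det B_i(z) = -1$ gives $\det B_\beta(\vec z) = (-1)^r$, and on $X(\beta)$ this equals $\prod_j b_j(\vec z)$. Comparing $(j,j)$-entries in the identity from (b) shows that $b_j$ transforms under the $T$-action by the character $e_j - e_{\pi_\beta^{-1}(j)}$, which lies in the root lattice and hence descends to a character of $T$. When $\beta$ closes to a knot, $\pi_\beta$ is a single $n$-cycle, and the only linear relation among these $n$ characters is that they sum to zero; hence any $n-1$ of them, say $\chi_2,\ldots,\chi_n$, form a basis of the character lattice of $T$. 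Using this basis to identify $T$ with $(\C^*)^{n-1}$, the map $\phi(\vec z) := (b_2(\vec z),\ldots,b_n(\vec z))$ is a $T$-equivariant morphism $\phi \colon X(\beta) \to T$, where $T$ acts on itself by translation. Equivariance forces freeness, since $\mathrm{Stab}_T(\vec z) \subseteq \mathrm{Stab}_T(\phi(\vec z)) = \{1\}$, and
\[
X(\beta) \;\longrightarrow\; \phi^{-1}(1) \times T, \qquad \vec z \;\longmapsto\; \bigl(\phi(\vec z)^{-1}\cdot \vec z,\; \phi(\vec z)\bigr),
\]
is an isomorphism with inverse $(\vec z_0, t) \mapsto t \cdot \vec z_0$, identifying $\phi^{-1}(1)$ with the free quotient $X(\beta)/T$.

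The matrix manipulations in (a) and (b) are routine. The main obstacle is the character-lattice input to (c): each cycle of $\pi_\beta$ contributes one linear relation among the $n$ diagonal-entry characters, so only when $\pi_\beta$ has a single cycle, i.e.\ exactly when the closure is a knot, do we obtain $n-1$ independent characters, enough to trivialize the $T$-bundle $X(\beta) \to X(\beta)/T$.
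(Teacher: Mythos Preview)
Your proof is correct and follows essentially the same approach as the paper. Part (a) is the same $2\times 2$ computation; part (b) is the same iteration; and for part (c) both you and the paper use the diagonal entries of $B_\beta(\vec z)$, which are nonvanishing on $X(\beta)$ since their product is $(-1)^r$, to produce a slice for the $T$-action (the paper fixes the diagonal to $((-1)^r,1,\ldots,1)$, you fix $b_2=\cdots=b_n=1$, which is the same up to the one redundant condition). Your version is more explicit about why this actually trivializes the action: you verify that the characters $e_j - e_{\pi_\beta^{-1}(j)}$ generate the full character lattice of $T$ precisely when $\pi_\beta$ is a single $n$-cycle, which is the honest content behind the paper's ``straightforward to see.''
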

\begin{proof}
\begin{enumerate}[(a)] 
\item It is clearly enough to verify for a single crossing, and here it is enough to work with $2 \times 2$-matrices. We have:
$$\begin{pmatrix}
 t_1 & 0 \\ 0 & t_2
\end{pmatrix}\begin{pmatrix}
 0 & 1 \\
  1 & z 
\end{pmatrix}
=\begin{pmatrix}
 0 & t_1 \\ t_2 & t_2 z
\end{pmatrix}=\begin{pmatrix}
0 & 1 \\
1 & \frac{t_2}{t_1} z
\end{pmatrix}\begin{pmatrix}
 t_2  & 0 \\
 0 & t_1
\end{pmatrix}$$
\item $t\in (\C^*)^n$ acts on the left - moving everything to the right in $B_\beta(\vec{z})$ using (a) multiplies entries of  $\vec{z}$ by $t_{w_k(i_{k+1})}t_{w_k(i_k)}^{-1}$ and permutes entries of $t$ by $w(\beta)$. The upper-triangularity condition doesn't change. So we get an action.
\item If the closure is a knot, the above dictates $t_i t_{w(i)}^{-1}=1$ for all $i$. So the stabilizers are all trivial. Now assume that $\beta$ closes up to a knot, and let $r:= \ell(\beta)$ be the length of the braid $\beta$. If we consider the subvariety $X'(\beta)$ consisting of $z \in X(\beta)$ such that $B_{\beta}(z)$ has a fixed diagonal $((-1)^{r}, 1, \dots, 1)$ then it is straightforward to see that $X'(\beta) \cong X(\beta)/T$ and that $X(\beta) \cong X'(\beta) \times T$. 
\end{enumerate}
\end{proof}

The following theorem generalizes Examples \ref{ex: xsigma3}--\ref{ex: xsigma5} to an arbitrary number of strands. 

\begin{theorem}[\cite{CGGS,CGGS2}]
\label{th: braid variety}
Suppose that $\beta=\gamma\Delta$ where $\Delta$ is the positive half-twist braid and $\gamma$ is an arbitrary positive braid. Then the following holds:

(a) $X(\beta)$ is non-empty if and only if $\gamma$ contains $\Delta$ as a subword. In what follows we assume that this is the case.

(b) $X(\beta)$ is smooth of (expected) dimension $\ell(\beta)-\binom{n}{2}=\ell(\gamma)$.

(c) The variety $X(\beta)$ is an invariant of $\gamma\Delta^{-1}$ under conjugation and {\bf positive} stabilization.

(d) $X(\beta)$ has a smooth compactification (depending on a braid word for $\beta$) where the complement to $X(\beta)$ is a normal crossings divisor with stratification labeled by the subwords of $\gamma$ containing $\Delta$.
\end{theorem}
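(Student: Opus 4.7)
The plan is to handle all four parts through a single rightward ``solving'' procedure for the $\binom{n}{2}$ equations that cut $X(\beta)$ out of $\C^r$. Each factor $B_i(z)$ equals a permutation matrix $P_{s_i}$ plus the rank-one correction $z E_{i+1,i+1}$, so considering the partial products $P_k = B_{i_k}(z_k)\cdots B_{i_r}(z_r)$ (read from the right), each new generator either contributes a free coordinate $z_k$ or is ``used up'' to kill one below-diagonal entry of $B_\beta$. The key technical input is Lemma \ref{lem: matrix braid rel} together with the commutation of disjoint $B_i$'s, which let me replace any positive braid word for $\beta$ by a word containing a distinguished subword equal to $\Delta$ (whenever such a subword exists), without changing $X(\beta)$ up to isomorphism.

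For (a), I would induct on $\ell(\gamma)$. Setting all $z_k = 0$ makes $B_\beta$ equal to the permutation matrix of $\pi(\beta) \in S_n$, so $B_\beta$ can be upper-triangular only if $\pi(\beta) = w_0$; since $\pi(\Delta) = w_0$, this forces $\pi(\gamma) = e$. The sharper requirement that $\gamma$ contain $\Delta$ as a subword comes from the solving procedure: if no such subword exists, then at some step the $(i,j)$-entry producing a new equation has already become a nonzero constant independent of the remaining $z$'s, making the system inconsistent; conversely, fixing a subword $\Delta \subset \gamma$ produces an explicit Bott--Samelson-type point of $X(\beta)$. For (b), the same procedure shows that at each step one either solves for $z_k$ via a matrix entry that is nonvanishing on $X(\beta)$, or marks $z_k$ as free; the number of free coordinates is exactly $\ell(\beta) - \binom{n}{2} = \ell(\gamma)$, and the Jacobian of the $\binom{n}{2}$ equations against the ``used'' $z_k$'s is triangular with invertible diagonal, yielding smoothness of the claimed dimension.

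For (c), invariance under conjugation $\beta \mapsto \sigma_j \beta \sigma_j^{-1}$ follows because $B_\beta$ is upper-triangular iff a suitable conjugate of it is, after the polynomial change of variables supplied by Lemma \ref{lem: matrix braid rel}; the fact that one must formally allow a single $\sigma_j^{-1}$ on the right is precisely why the invariance is stated for $\gamma\Delta^{-1}$ rather than for $\beta$ itself. For positive stabilization $\beta \mapsto \beta\sigma_n$ on $n+1$ strands, the extra factor $B_n(z_{r+1})$ touches only the last two rows and columns, adds one new equation that is affine-linear in $z_{r+1}$ with invertible leading coefficient, and leaves one genuinely free $\C$-factor, giving $X(\beta\sigma_n) \cong X(\beta)\times \C$. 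For (d), I would compactify by replacing each $\C_{z_k}$ by a copy of $\mathbb P^1$ and taking the closure of $X(\beta)$ in the resulting $(\mathbb P^1)^r$. The boundary divisors $D_k = \{z_k = \infty\}$ meet transversally; the stratum where a subset $S \subseteq \{1,\dots,r\}$ of coordinates has been sent to infinity is nonempty iff the subword of $\beta$ indexed by the complement of $S$ still contains $\Delta$, matching the claimed labeling.

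The main technical obstacle is proving the nonvanishing-of-pivot claim that underlies both (a) and (b): at each step of the rightward procedure, the polynomial used to solve for $z_k$ must not vanish on $X(\beta)$. This is essentially a Deodhar-style transversality statement, and establishing it cleanly will require a careful choice of reading order, most naturally a ``lexicographically minimal'' choice of pivots guided by a fixed subword $\Delta \subset \gamma$, so that each accumulated denominator is itself a pivot at some later step. The same nonvanishing statement is what certifies the normal crossings property in (d), so all four parts ultimately hinge on this one combinatorial-algebraic lemma, which I expect to be the most delicate step.
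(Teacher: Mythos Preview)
Your approach diverges substantially from the one the paper sketches (following \cite{CGGS,CGGS2}), and there are genuine gaps in parts (c) and (d).

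For (d), the compactification you propose---taking the closure inside $(\mathbb{P}^1)^r$---is not the one used in the references and will not in general be smooth with normal-crossings boundary. The entries of $B_\beta(z)$ are multilinear in the $z_k$, but the multi-homogenized hypersurfaces they cut out in $(\mathbb{P}^1)^r$ typically acquire singular points at infinity (already a single equation like $z_1z_2+z_3z_4=0$ is singular in its $(\mathbb{P}^1)^4$ closure). The paper instead identifies $X(\beta)$, up to reversing the word, with the locus in the open Bott--Samelson variety $\mathrm{OBS}_\beta$ where the first and last flags are fixed to be standard; the compactification is then the corresponding fiber of the closed Bott--Samelson $\mathrm{BS}_\beta$, i.e.\ Escobar's brick variety. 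Smoothness and the normal-crossings boundary (indexed by the Knutson--Miller subword complex) come from the iterated $\mathbb{P}^1$-bundle structure of $\mathrm{BS}_\beta$, not from any product decomposition.

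For (c), your sketch does not handle the actual difficulty. Conjugating $\gamma\Delta^{-1}$ by $\sigma_j$ and then re-multiplying by $\Delta$ gives $\sigma_j\gamma\sigma_{n-j}^{-1}\Delta$, which is not a positive word unless $\gamma$ happens to end in $\sigma_{n-j}$; Lemma~\ref{lem: matrix braid rel} alone does not produce the required isomorphism of varieties. The paper's route is entirely different: it identifies $X(\gamma;w_0)$ with the augmentation variety of the Chekanov--Eliashberg dga of a Legendrian representative of the closure of $\gamma\Delta^{-1}$, and then invokes Legendrian invariance of that dga. Your framework for (a) and (b) is closer in spirit to a direct argument, but the ``nonvanishing-of-pivot'' statement you isolate at the end is essentially the whole content of smoothness, and the clean way to establish it is again via the flag interpretation, where it becomes transversality of the diagonal slice in $\mathrm{OBS}_\beta$ rather than an ad hoc elimination.
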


\begin{remark}
Note that the braid $\gamma\Delta^{-1}=\beta\Delta^{-2}$ which appears in part (c) is not necessary positive. 
\end{remark}

\begin{remark}
The combinatorics of the strata in the compactification in part (d) is described by the {\bf subword complex} of $\gamma$ defined by Knutson and Miller \cite{KM}. In fact, this compactification coincides with the {\bf brick variety} of the braid word $\beta$, studied by Escobar in \cite{escobar}.
\end{remark}

It is sometimes more convenient to work with the following modification of the braid variety
\[
X(\beta;w_0) := \{(z_1, \dots, z_r) : B_{\beta}(z_1, \dots, z_r)w_0 \; \text{is upper-triangular}\} \subseteq \C^r 
\]
\noindent where $w_0 \in S_n$ is the longest element, $w_0 = [n, n-1, \dots, 1]$ which is interpreted in the equation above as a permutation matrix. In fact, in the context of Theorem \ref{th: braid variety} we have, see \cite{CGGS}
$$
X(\beta) = X(\gamma\Delta) = X(\gamma;w_0) \times \C^{\binom{n}{2}}.
$$

\noindent Note that this explains the appearance of the $\C$-factor in Examples \ref{ex: xsigma3}--\ref{ex: xsigma5}.

\begin{example}
As in Example \ref{ex: sigma4}, we get 
$$
X(\sigma^4)=\{1+z_1z_2\neq 0\}\times \C_{z_4}=X(\sigma^3;w_0)\times \C
$$
where
$$
X(\sigma^3;w_0)=\{1+z_1z_2\neq 0\}.
$$
This compactifies to $\P^1\times \P^1$, with the following strata of the complement: the hyperbola $\{1+z_1z_2=0\}$, two lines at infinity, and three pairwise intersection points of these. The strata are in bijection with nonempty subwords of $\sigma^3$.
\end{example}

Let us briefly comment on the ideas behind the proof of Theorem \ref{th: braid variety}(c), following \cite{CGGS2}. First, to the braid $\beta=\gamma\Delta$ one can associate a {\bf Legendrian link}  which has the smooth type of the closure of $\beta\Delta^{-2}=\gamma\Delta^{-1}$. This is done using ``pigtail closure'' of $\beta$, see \cite{CN,CGGS2}. Next, to any Legendrian link Chekanov \cite{Che} associated a dg algebra $\CCC$ and proved that its cohomology is a Legendrian link invariant. See also \cite{CN} for a construction of $\CCC$ over the integers and more details.

An important invariant of a dga $\CCC$ is its {\bf augmentation variety} defined as $\Aug(\CCC)=\Spec H^0(\CCC)$. By the work of K\'alm\'an \cite{Kalman} for a positive braid $\gamma$ the dga $\CCC$ coincides with the Koszul complex for the equations defining $X(\gamma;w_0)$, so that $\Aug(\CCC)=X(\gamma;w_0)$. If $\gamma$ is not positive but is equivalent to a positive braid, one can still describe $\Aug(\CCC)$ as the quotient of some explicit algebraic variety by a free action of some affine space $\C^w$ with coordinates corresponding to the negative crossings in $\gamma$. 

See \cite{CGGS2} for an explicit description of $\CCC$, $\Aug(\CCC)$ and their behaviour under braid moves, conjugation and positive stabilization.

\subsection{Homology of braid varieties}

The following result has been discussed in e.g. \cite{Mellit2,STZ}, but in this form it was stated and proved only recently in \cite[Corollary 4]{Trinh}. 

\begin{theorem}
\label{th: braid variety homology}
The $T=(\C^*)^{n}$-equivariant Borel-Moore homology of $X(\beta)$ has a nontrivial weight filtration. The associated graded for this filtration is isomorphic to 
$$
\gr_{W}H^T_{*,BM}(X(\beta))\simeq \HHH^n(\beta).
$$
Here the weight grading corresponds, up to an overall shift in {\em loc. cit.}, to the $q$-grading, and the homological grading corresponds to the sum of the $q$ and $t$-gradings. The action of the variables $x_i$ on the right hand side corresponds to the equivariant parameters, that is, the generators of the $T$-equivariant homology of a point.
\end{theorem}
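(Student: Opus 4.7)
The plan is to prove the theorem by induction on the braid length $r = \ell(\beta)$, matching a geometric recursion on the braid varieties $X(\beta)$ with the algebraic recursion provided by the Rouquier complexes. The base case $r = 0$ is immediate: $X(\emptyset)$ is a point, so $H^{T}_{*,BM}(\mathrm{pt}) = H^{*}_{T}(\mathrm{pt}) = R$ with trivial weight filtration, while $T_{\emptyset} = R$ has $\HH^{n}(R) \cong R$ by the Hochschild--Kostant--Rosenberg isomorphism, with matching gradings.

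For the inductive step, write $\beta = \beta' \sigma_{i}$ and stratify $X(\beta) \subset \C^{r}$ by the last coordinate: the closed subvariety $Z = \{z_{r} = 0\}$ and its open complement $U = \{z_{r} \neq 0\}$. Since $B_{i}(0)$ is the permutation matrix $s_{i}$, the locus $Z$ is isomorphic to an auxiliary braid variety attached to $\beta'$ with a modified boundary permutation. Using the torus action of Lemma \ref{lem: torus action}, one checks that $U$ is a $\C^{\times}$-bundle over a lower-dimensional braid variety. The resulting long exact sequence in $T$-equivariant Borel--Moore homology splits into short exact sequences on the associated graded for the weight filtration, thanks to the smoothness of $X(\beta)$ (Theorem \ref{th: braid variety}(b)) and the good compactification with normal-crossings boundary provided by the brick variety in Theorem \ref{th: braid variety}(d), which supplies the required purity.

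On the algebraic side, $T_{\beta} \simeq T_{\beta'} \otimes_{R} T_{i}$ with $T_{i} = [B_{i}(-1) \to \underline{R}]$ produces a two-term filtration, and applying the additive functor $\HH^{n}(-)$ termwise gives a two-step complex computing $\HHH^{n}(\beta)$. By induction, $\HH^{n}(T_{\beta'})$ is identified with $\gr_{W} H^{T}_{*,BM}(U)$. A direct computation, using the presentation $B_{i} = R \otimes_{R^{s_{i}}} R$ and the Koszul resolution of $R$ as an $R$-bimodule, shows that $\HH^{n}(T_{\beta'} \otimes_{R} B_{i}(-1))$ matches $\gr_{W} H^{T}_{*,BM}(Z)$ in the correct gradings. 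This reduces the inductive step to an identification of differentials.

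The main obstacle, and the technical heart of the argument, is to check that the differential in the Rouquier complex matches the connecting homomorphism of the geometric long exact sequence. Geometrically the latter is a Gysin/residue map along the divisor $Z = \{z_{r} = 0\}$, while algebraically the differential of $T_{i}$ is multiplication by $x_{i} - x_{i+1}'$ from Lemma \ref{lem: problem 2}(b). To identify the two, one tracks how equivariant weights transform under the right-multiplication rule of Lemma \ref{lem: torus action}(a): the crossing $\sigma_{i}$ swaps the $i$-th and $(i+1)$-st torus weights, and the Euler class of the $\C^{\times}$-fibration over the stratum records precisely the weight difference $x_{i} - x_{i+1}'$, where the prime encodes the weight on the right tensor factor. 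Combining this with the top-degree piece of the Koszul computation of $\HH^{n}$ matches the differentials on the nose and closes the induction.
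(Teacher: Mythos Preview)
Your inductive scheme has a genuine gap: the stratification by $z_r$ is degenerate. Writing $\beta=\beta'\sigma_i$, a direct check on the relevant $2\times 2$ block gives $B_i(z_r)=s_i\,U(z_r)$ where $U(z_r)$ is upper-unipotent with a single off-diagonal entry $z_r$. Hence $B_\beta(z)=B_{\beta'}(z')\,s_i\,U(z_r)$, and since right multiplication by an element of $B$ preserves upper-triangularity, the defining condition of $X(\beta)$ does not involve $z_r$ at all. Therefore
\[
X(\beta)\;\cong\;X(\beta';s_i)\times\C_{z_r},\qquad X(\beta';s_i):=\{z':B_{\beta'}(z')s_i\ \text{is upper-triangular}\},
\]
and your strata are $Z\cong X(\beta';s_i)$ and $U\cong X(\beta';s_i)\times\C^\times$, both governed by the \emph{same} auxiliary variety. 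The long exact sequence you obtain is merely the Gysin sequence of a trivial line bundle and carries no information about the Rouquier cone. Neither piece recovers $X(\beta')$, so your identification of $\HH^n(T_{\beta'})$ with $\gr_W H^T_{*,BM}(U)$ fails already for $\beta=\sigma^3$ on two strands, where $X(\sigma^2)\cong\C$ but $U\cong\C^\times\times\C^\times$. The two terms $T_{\beta'}\otimes B_i$ and $T_{\beta'}$ of $T_\beta$ are simply not modeled by $Z$ and $U$, and the ``matching of differentials'' in your last paragraph is comparing the wrong maps.

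The paper's argument is organized differently and avoids this trap. One identifies $X(\beta)$ with a fixed-endpoint slice of the open Bott-Samelson variety $\mathrm{OBS}_\beta$ and compactifies by the smooth projective closed Bott-Samelson $\mathrm{BS}_\beta$; the complement is stratified by \emph{all} proper subwords of $\beta$, and the $T$-equivariant cohomology of each closed Bott-Samelson stratum is precisely the corresponding tensor product of the $B_{i_k}$. Excision over this global stratification reproduces, term by term, the Rouquier complex $T_\beta$ with Hochschild cohomology applied, and purity of the closed strata supplies the weight filtration. If you insist on a one-letter induction, the geometric avatar of $T_{\beta'}\otimes B_i$ is not a stratum of $X(\beta)$ but the ``half-open'' Bott-Samelson in which only the last incidence is relaxed; its open/closed pieces are $X(\beta';s_i)$ and $X(\beta')$, and it is the connecting map of \emph{that} triple that matches the Rouquier differential $B_i\to R$.
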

\begin{remark}
Note that the variety used in \cite[Corollary 4]{Trinh} slightly differs from our $X(\beta)$ and loc. cit. uses $G$-equivariant homology instead of $T$-equivariant one. Nevertheless, by \cite[Remark B.4.3]{Trinh} the corresponding homologies are isomorphic.
\end{remark}
\begin{remark}
Similar results in sheaf- and Soergel-theoretic form appear in \cite{WW, WW2, Rouquier}.
\end{remark}
\begin{remark}
Note that by the main result of \cite{GHMN} we have
$$
\HHH^n(\beta)=\HHH^0(\beta\Delta^{-2})=\HHH^0(\gamma\Delta^{-1}).
$$
As we discussed above, this is invariant under conjugation and positive stabilization of $\gamma\Delta^{-1}$ in agreement with Theorem \ref{th: braid variety}(c).
\end{remark}
\begin{remark}
In \cite{Trinh} Trinh  has also extended  Theorem \ref{th: braid variety homology} to all Hochschild degrees using Springer theory. 
\end{remark}
\begin{remark}
Since $X(\beta)$ is smooth, on the level of triply graded vector spaces Theorem \ref{th: braid variety homology} may be also stated in terms of usual singular cohomology. Namely, the Verdier dualizing sheaf of $X(\beta)$ is simply $\omega_{X(\beta)}\cong \C[-\dim X(\beta)]$, and the equivariant BM homology agrees as a doubly graded vector space with the equivariant cohomology. This is however not the case  for the more general varieties appearing in the extension to higher Hochschild degrees, and equivariant BM homology (or up to a linear duality, cohomology with compact supports) seems to be the most natural choice also in view of Section \ref{sec: gasf}. 
\end{remark}
Let us sketch some ideas in the proof of Theorem \ref{th: braid variety homology}, referring the reader to \cite{Trinh} for more details.

First, recall the bimodules $B_i=R\otimes_{R^{s_i}}R$. To give a geometric interpretation to these, consider the {\bf Bott-Samelson variety}  \cite{Brion}
$$
\mathrm{BS}_i=\{(\CF,\CF'): \CF,\CF'\ \text{complete flags}, \CF_j=\CF'_j\ \text{for}\ j\neq i.\}
$$
We have line bundles $\CL_j=\CF_j/\CF_{j-1}$ and $\CL'_j=\CF'_j/\CF'_{j-1}$ and the corresponding Chern classes
$$
x_j=c_1(\CL_j),\ x'_j=c_1(\CL'_j).
$$
Clearly, $\CL_j\simeq \CL'_j$  and $x_j=x'_j$ for $j\neq i,i+1$. Furthermore, 
\begin{equation}
\label{eq: rank 2 quotient}
\CF_{j+1}/\CF_{j-1}\simeq \CF'_{j+1}/\CF'_{j-1}
\end{equation}
is filtered both by $\CL_i,\CL_{i+1}$ and by $\CL'_i,\CL'_{i+1}$, so
the elementary symmetric functions in $x_i,x_{i+1}$ and $x'_i,x'_{i+1}$ describe Chern classes of the rank two bundle \eqref{eq: rank 2 quotient} and hence agree with each other. These are precisely the defining equations for $B_i$.

Given a tensor product $B_{i_1}\otimes\cdots \otimes B_{i_r}$, we can define the more general Bott-Samelson variety as the space of sequences of flags $(\CF^{(1)},\ldots,\CF^{(r+1)})$ such that $(\CF^{(s)},\CF^{(s+1)})$ satisfy the conditions for $\mathrm{BS}_{i_s}$ for all $s$.

Next, we associate some geometric objects to Rouquier complexes $T_i$ associated to {\bf positive} braids. We say that two flags $\CF,\CF'$ are in position $s_i$ if $\CF_j=\CF'_j$ for $j\neq i$ and $\CF_i\neq \CF'_i$. This is an open subset of $\mathrm{BS}_i$ complementary to the diagonal. To a Rouquier complex $T_{\beta}=T_{i_1}\otimes \cdots \otimes T_{i_r}$ we associate the {\bf open Bott-Samelson variety} $\mathrm{OBS}_{\beta}$ consisting of sequences of flags $(\CF^{(1)},\ldots,\CF^{(r+1)})$ such that $(\CF^{(t)},\CF^{(t+1)})$ are in position $s_{i_t}$. The open Bott-Samelson variety was considered in the work of Brou\'e, Michel and Deligne \cite{BM,Deligne}, and later in \cite{STZ}, it plays a prominent role in the Deligne-Lusztig theory \cite{DL}. In particular, Deligne proved in \cite{Deligne} that $\mathrm{OBS}_{\beta}$ is an invariant of the braid $\beta$ and does not change under braid relations up to a (canonical) isomorphism. 

To compute the homology of $\mathrm{OBS}_{\beta}$, one can use the exact sequence in equivariant cohomology coming from excision and inclusion-exclusion. Namely, the open Bott-Samelson variety has a natural embedding into the closed Bott-Samelson variety for the same braid word. Its complement is a union of closed Bott-Samelson varieties corresponding to subwords of $\beta$ with one letter skipped, the intersections of which correspond to subwords of $\beta$ with two letters skipped and so on. As a result, we can compute the homology of $\mathrm{OBS}_{\beta}$ using the homologies of closed Bott-Samelson varieties for all possible subwords of $\beta$. This is parallel to the expansion of $T_{\beta}$ as a complex built out of tensor products of $B_i$ for all subwords of $\beta$.

Finally, we need to compare the open Bott-Samelson variety $\mathrm{OBS}_{\beta}$ to the braid variety $X(\beta)$. This is given by the following 




\begin{lemma}[\cite{CGGS2}]
(a) The variety $X(\reflectbox{$\beta$};w_0)$ is a subset of $\mathrm{OBS}_{\beta}$ where the first flag $\CF^{(1)}$ is chosen to be standard, and the last flag $\CF^{(r+1)}$ is chosen to be antistandard: $\CF^{(r+1)}=w_0\CF^{(1)}$.

(b) The variety $X(\reflectbox{$\beta$})$  is a subset of $\mathrm{OBS}_{\beta}$ where both the first and the last flags are chosen to be standard.

Here, \reflectbox{$\beta$} is the braid $\beta$ read in the opposite direction.
\end{lemma}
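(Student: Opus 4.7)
The plan is to identify each point of the braid variety with a specific chain of flags in the open Bott--Samelson variety, via the iterated action of the transition matrices $B_i(z)$. My starting observation would be that for a flag $\CF$ with matrix representative $F$ (say, under the convention that the first $j$ columns of $F$ span $\CF_j$), the matrix $F \cdot B_i(z)$ is a representative of a flag $\CF'$ such that $(\CF, \CF')$ is in position $s_i$; as $z$ varies over $\C$ this parameterizes the affine cell complementary to the diagonal inside $\mathrm{BS}_{s_i}$, and the correspondence is bijective once $F$ is fixed.

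Next I would iterate through the braid word $\beta = \sigma_{i_1}\cdots\sigma_{i_r}$, starting from the standard flag with representative $F^{(1)} = I$. The coordinates $(z_1,\ldots,z_r)$ then produce a chain $(\CF^{(1)},\ldots,\CF^{(r+1)})$ of flags in $\mathrm{OBS}_\beta$ whose terminal representative $F^{(r+1)}$ is the appropriate product of the $B_{i_t}(z_t)$. The appearance of $\reflectbox{$\beta$}$ in the statement should come out of the convention that the product $B_\beta(z_1,\ldots,z_r) = B_{i_1}(z_1)\cdots B_{i_r}(z_r)$ defined at the start of Section 4 encodes the transition from $\CF^{(r+1)}$ back to $\CF^{(1)}$ when the iteration is done by left multiplication; after relabeling $u_k = z_{r-k+1}$, the flag subset parameterized in this way matches the braid variety of $\reflectbox{$\beta$}$ rather than of $\beta$.

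The final step is to translate the boundary condition on $\CF^{(r+1)}$ into an algebraic condition on $B_{\reflectbox{$\beta$}}$. For part (b), $\CF^{(r+1)}$ is the standard flag exactly when $F^{(r+1)}$ lies in the upper Borel $B$, i.e., $B_{\reflectbox{$\beta$}}$ is upper-triangular, recovering the defining equations of $X(\reflectbox{$\beta$})$. For part (a), $\CF^{(r+1)} = w_0 \CF^{(1)}$ is antistandard exactly when $F^{(r+1)}$ lies in the coset $B w_0$, equivalently when $B_{\reflectbox{$\beta$}}\cdot w_0$ is upper-triangular, recovering the defining equations of $X(\reflectbox{$\beta$}; w_0)$.

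The main obstacle I anticipate is keeping the conventions rigorously consistent --- in particular, the direction of iteration (left- versus right-multiplication of $B_i(z)$), the interpretation of a matrix as a flag representative, and the precise distinction between $B w_0$ and $w_0 B$ for the antistandard flag --- so that both parts of the lemma come out with the correct reversal $\reflectbox{$\beta$}$ and the correct placement of $w_0$. Once this bookkeeping is settled, each claim reduces to an explicit matrix computation of the kind already carried out in Lemma~\ref{lem: matrix braid rel} and Lemma~\ref{lem: torus action}.
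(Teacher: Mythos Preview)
The paper does not give its own proof of this lemma; it simply states the result with a citation to \cite{CGGS2}. So there is nothing to compare against directly.

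Your outline is the standard argument and is essentially correct. The key observation --- that right-multiplying a flag representative $F$ by $B_i(z)$ produces a new flag in relative position $s_i$, and that as $z$ ranges over $\C$ this sweeps out exactly the affine chart of $\mathrm{BS}_{s_i}$ complementary to the diagonal --- is exactly what makes the correspondence work. Iterating from the identity matrix and imposing that the final matrix lie in $B$ (respectively $Bw_0$) then recovers the defining equations of the braid variety, as you say.

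Your caution about the reversal is well-placed: with the most naive convention (start at $F^{(1)}=I$, set $F^{(t+1)}=F^{(t)}B_{i_t}(z_t)$), one obtains $F^{(r+1)}=B_\beta(z_1,\ldots,z_r)$ with no reversal, so the identification would read $X(\beta)\hookrightarrow\mathrm{OBS}_\beta$ rather than $X(\reflectbox{$\beta$})$. The reflected braid in the lemma's statement comes from the paper's (and \cite{CGGS2}'s) specific conventions for how braid words are read relative to flag sequences. This is purely bookkeeping, and you have correctly flagged it as the only real subtlety; once you fix a single convention and follow it through consistently, the argument is a direct computation.
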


Identification of the first and last flag in (b) corresponds to closing the braid and computing $\HHH^n$.

The compactification of $X(\beta)$ mentioned in Theorem \ref{th: braid variety} corresponds, up to replacing $\beta$ by \reflectbox{$\beta$}, to the compactification of $\mathrm{OBS}_{\beta}$ by $\mathrm{BS}_{\beta}$. Note that this compactification depends on the choice of a braid word for $\beta$.

\begin{example}
Let us compute the homology of the varieties $X(\beta)$ for $\beta=\sigma^3,\sigma^4$ from Examples \ref{ex: xsigma3} and \ref{ex: sigma4}. Note that for $\beta=\sigma^3$ 
by Lemma \ref{lem: torus action} the torus action is free, and the equivariant homology is related to the non-equivariant homology by a trivial factor.

The homology of $X(\sigma^3)=\C^*\times \C$ is clear. To compute the homology of $X(\sigma^4)=\{1+z_1z_2\neq 0\}\times \C$ we use the Alexander duality. The hyperbola $\{1+z_1z_2=0\}$ is isomorphic to $\C^*$ and has nontrivial $H^0$ and $H^1$, or, equivalently, nontrivial cohomology with compact support $H^{2,c}$ and $H^{1,c}$. Now
$$
\widetilde{H_i}(\{1+z_1z_2\neq 0\})=H^{4-1-i,c}(\{1+z_1z_2=0\})
$$
so we have nontrivial $H^1$ and $H^2$. Since it is connected, we also have $H^0$. To sum up, we have
$$
H^0(X(\sigma^4))=H^1(X(\sigma^4))=H^2(X(\sigma^4))=\Z,
$$
and the other homologies vanish.
\end{example}

\begin{example}
The computation for $X(\sigma^5)$ from Example \ref{ex: xsigma5} is similar: by the Alexander duality  we have
$$
\widetilde{H_i}
\left(\{z_1+z_3+z_1z_2z_3\neq 0\}\right)=H^{6-1-i,c}
\left(\{z_1+z_3+z_1z_2z_3=0\}\right)=H^{5-i,c}\left(\{1+z_1z_2\neq 0\}\right)
$$
while by the Poincar\'e duality we have
$$
H^{5-i,c}(\{1+z_1z_2\neq 0\})=H_{i-1}(\{1+z_1z_2\neq 0\}).
$$
Therefore, 
$$
H^0(X(\sigma^5))=H^1(X(\sigma^5))=H^2(X(\sigma^5))=H^3(X(\sigma^5))=\Z,
$$
and the other homologies vanish. Note that $\sigma^5$ closes up to a knot, so $T=\C^*$ action on $X(\sigma^5)$ is free and by Lemma \ref{lem: torus action}(c) we have
$$
H^*(X(\sigma^5))=H^*(\C^*)\times H^*_{T}X(\sigma^5),
$$
so that 
$$
H^0_{T}(X(\sigma^5))=H^2_{T}(X(\sigma^5))=\Z
$$
and other equivariant homology vanish. This agrees with the $A=0$ Khovanov-Rozansky homology of the trefoil $T(2,3)$, see Example \ref{ex: two strand homology}. 
\end{example}

\begin{example}
\label{ex: E6 homology}
Consider the three strand braid $\beta=(\sigma_1\sigma_2)^7$ which corresponds to the $(3,4)$ torus knot obtained as the closure of $\beta\Delta^{-2}$. It has a free action of the torus $T=(\C^*)^2$ and the quotient $X(\beta\Delta^{-1};w_0)/T$ is isomorphic to the $E_6$ cluster variety (see Example \ref{ex: E6} below). Its homology 
$$
\gr^p_{W}H^k(X(\beta\Delta^{-1};w_0)/T)=\gr^p_{W}H^k_T(X(\beta))
$$
with the weight filtration was computed by Lam and Speyer in \cite{LS} and is given by the following table:
\begin{center}
\begin{tabular}{|c|c|c|c|c|c|c|c|}
\hline
  & $H^0$ & $H^1$ & $H^2$ & $H^3$ & $H^4$ & $H^5$ & $H^6$ \\
\hline
     $k-p=0$ & 1 & 0 & 1 & 0 & 1 & 0 & 1 \\
\hline
     $k-p=1$ &   0 & 0 & 0 & 0 & 1 & 0 & 0 \\
\hline    
\end{tabular}
\end{center}
This matches the bottom row of the Khovanov-Rozansky homology of $T(3,4)$ in Example \ref{ex: T34 homology}.
\end{example}

\subsection{From braid varieties to positroid varieties}

Following the recent work of Galashin and Lam \cite{GL,GL2} and \cite{CGGS2}, we relate the braid varieties to {\bf positroid varieties} in the Grassmannian defined in \cite{KLS}.

Recall that the Grassmannian $\mathrm{Gr}(k,n)$ parametrizes $k$-dimensional subspaces in $n$-dimensional space, which can be presented as the row span of a $k\times n$ matrix of maximal rank. Let $v_1,\ldots,v_n$ be the columns of such matrix, extend these periodically by setting $v_{i+n}=v_i.$

The {\bf open positroid stratum} $\Pi_{k,n}$ is defined by the conditions that all the cyclically consecutive $k\times k$ minors 
$$
\Delta_{i,\ldots,i+k-1}=\det(v_i,\ldots,v_{i+k-1})
$$
are nonzero.

\begin{lemma}[\cite{CGGS2}]
\label{ex: positroid}
We have 
$$
\Pi_{k,n}=(\C^*)^{n-k}\times X(\beta_{k,n-k}\Delta;w_0)
$$
where $\beta_{k,n-k}=(\sigma_1\cdots\sigma_{k-1})^{n-k}$ is the $(k,n-k)$ torus braid on $k$ strands.
\end{lemma}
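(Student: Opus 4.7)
My plan is to construct an explicit biregular isomorphism by realizing a matrix representative of $\Pi_{k,n}$ as a sequence of ``frame transitions'', each of which factors into a product of braid matrices $B_j(z)$ times a diagonal rescaling. Represent a point of $\Pi_{k,n}$ by a $k\times n$ matrix with columns $v_1,\ldots,v_n$, extended cyclically by $v_{i+n}=v_i$. The open positroid condition guarantees that for every $i$ the tuple $F^{(i)}=(v_i,\ldots,v_{i+k-1})$ is a basis of $\C^k$, so there exist unique $c_{i,0},\ldots,c_{i,k-1}$ with $v_{i+k}=\sum_j c_{i,j}v_{i+j}$ and $c_{i,0}\in\C^*$ (this is the cyclic nondegeneracy). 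I normalize $F^{(1)}=I$ using the $\mathrm{GL}_k$-action afforded by $\Delta_{1,\ldots,k}\neq 0$. The transition matrix $X_i$ defined by $F^{(i+1)}=F^{(i)}X_i$ has first $k-1$ columns equal to shifted standard basis vectors and last column $(c_{i,0},\ldots,c_{i,k-1})^T$. A direct computation, modelled on the identity $B_1(a_1)\cdots B_{k-1}(a_{k-1})=X_i|_{c_{i,0}=1}$, gives a factorization
\[
X_i \;=\; B_1(a_{i,1})\,B_2(a_{i,2})\,\cdots\, B_{k-1}(a_{i,k-1})\cdot D_i,\qquad D_i=\mathrm{diag}(1,\ldots,1,c_{i,0}),\ a_{i,j}=c_{i,j}/c_{i,0}.
\]

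Iterating for $i=1,\ldots,n-k$ and commuting each $D_i$ through the remaining $B_j$'s via the rescaling identity of Lemma \ref{lem: torus action}(a), the product $X_1X_2\cdots X_{n-k}$ becomes a braid product in the $(k-1)(n-k)$ rescaled coordinates multiplied on the right by one overall diagonal matrix whose nontrivial entries are (rescaled) $c_{1,0},\ldots,c_{n-k,0}$; the latter supply the $(\C^*)^{n-k}$ factor, and the $a_{i,j}$'s supply the first $(k-1)(n-k)$ crossings, which are precisely the generators $\sigma_1\cdots\sigma_{k-1}$ repeated $n-k$ times, i.e.\ the braid $\beta_{k,n-k}$. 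To encode the closing-up $F^{(n+1)}=F^{(1)}=I$ coming from the $k-1$ wrap-around columns $v_{n+1}=v_1,\ldots,v_{n+k-1}=v_{k-1}$, I parameterize the last segment of the braid by the $\binom{k}{2}$ coordinates of $\Delta$ and verify that the resulting identity $F^{(n+1)}=I$ is equivalent to demanding that $B_{\beta_{k,n-k}\Delta}(z_1,\ldots,z_r)\cdot w_0$ be upper triangular, where $w_0$ arises from the identification of the final frame with the antistandard frame $w_0\cdot I$ relative to the normalized initial frame.

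The main obstacle is to verify that the assignment is a regular isomorphism (not merely birational) and that the product structure is honest. For this I would construct the inverse explicitly: from $(c_{1,0},\ldots,c_{n-k,0})\in(\C^*)^{n-k}$ and $(z_1,\ldots,z_r)\in X(\beta_{k,n-k}\Delta;w_0)$, recover the frames $F^{(i)}$ for $i\leq n-k+1$ via the factored matrix product above, and then uniquely solve for the wrap-around columns $v_{n+1},\ldots,v_{n+k-1}$ using the upper-triangularity condition; this determines $v_1,\ldots,v_n$ up to the $\mathrm{GL}_k$-normalization. Openness and nonvanishing of the cyclic minors are automatic because the relevant denominators in the inverse are the $\Delta_{j,\ldots,j+k-1}$, controlled by the open conditions of the braid variety (itself smooth by Theorem \ref{th: braid variety}(b)). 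An alternative route, following \cite{CGGS2,GL,GL2}, is to identify both sides with the augmentation variety of a common Legendrian link obtained via pigtail closure and invoke K\'alm\'an's theorem \cite{Kalman}; this has the advantage of making the $(\C^*)^{n-k}$ factor transparent from the Legendrian side, at the cost of importing more machinery.
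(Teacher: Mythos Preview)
The paper does not give a proof of this lemma; it cites \cite{CGGS2} and then illustrates the statement with the two $k=2$ examples $\Pi_{2,4}$ and $\Pi_{2,5}$ via direct row reduction. Your frame-transition approach is the natural generalization of those examples and is essentially the argument in \cite{CGGS2}: the factorization $X_i=B_1(a_{i,1})\cdots B_{k-1}(a_{i,k-1})\,D_i$ with $D_i=\mathrm{diag}(1,\ldots,1,c_{i,0})$ is correct (check it for $k=2,3$), and pushing the diagonals to the right via Lemma~\ref{lem: torus action}(a) indeed produces $B_{\beta_{k,n-k}}$ in rescaled variables together with an invertible diagonal whose entries record the $(\C^*)^{n-k}$ factor.

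The genuine gap is in your closing-up step. Your description is internally inconsistent: you invoke $F^{(n+1)}=I$, which would require continuing the transition process and hence contribute $(k-1)^2$ or $k(k-1)$ further crossings, while simultaneously asserting that this ``last segment'' has only $\binom{k}{2}$ coordinates; and the explanation that $w_0$ ``arises from the identification of the final frame with the antistandard frame'' does not match what is actually happening (the frames return to $I$, not to $w_0$). The correct argument does not continue the transitions at all. After the $n-k$ steps one has $F^{(n-k+1)}=B_{\beta_{k,n-k}}(z')\cdot D$ with $D$ diagonal invertible, and expanding the cyclic minors $\Delta_{n-k+1+j,\ldots,n,1,\ldots,j}$ along the columns $e_1,\ldots,e_j$ shows that the remaining $k-1$ positroid conditions are exactly the nonvanishing of the bottom-right principal minors of $B_{\beta_{k,n-k}}(z')$. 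On the other hand, a direct computation (for $k=3$ use Lemma~\ref{lem: matrix braid rel}) shows that $z''\mapsto B_\Delta(z'')\,w_0$ is a bijection from $\C^{\binom{k}{2}}$ onto the unipotent lower-triangular matrices. Hence ``there exists $z''$ with $B_{\beta_{k,n-k}}(z')B_\Delta(z'')\,w_0$ upper-triangular'' is precisely the existence of a $UL$ factorization of $B_{\beta_{k,n-k}}(z')$, which holds if and only if those same bottom-right minors are nonzero. This is the missing bridge between the cyclic positroid conditions and the $w_0$-upper-triangularity; once it is in place, the rest of your outline (including the explicit inverse) goes through.
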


\begin{example}
Consider the open positroid variety $\Pi_{2,4}$.
Since $\Delta_{1,2}\neq 0$, we can use row operations to write the matrix as 
$$
\left(\begin{matrix}
1 & 0 & a & b\\
0 & 1 & c & d\\
\end{matrix}\right)
$$
Other conditions on minors imply 
$$
a\neq 0, d\neq 0, ad-bc\neq 0. 
$$
By denoting $x=b/a, y=-c/d$ we get $xy+1\neq 0$ which is the defining inequality of $X(\sigma^3;w_0)$.
\end{example}

\begin{example}
Similarly, for $\Pi_{2,5}$ we get the matrix
$$
\left(\begin{matrix}
1 & 0 & a & b & e\\
0 & 1 & c & d & f\\
\end{matrix}\right)
$$
such that $a\neq 0, f\neq 0, ad-bc\neq 0, bf-de\neq 0$. Without loss of generality, we can get rid of $(\C^*)^3$ and assume that $a=f=ad-bc=1$, then $d=1+bc$. Now
$$
bf-de=b-(1+bc)e\neq 0.
$$
If we denote $z_1=b,z_2=c,z_3=-e$, we get
$$
z_1+(1+z_1z_2)z_3=z_1+z_3+z_1z_2z_3\neq 0
$$
which is the defining inequality of $X(\sigma^4;w_0)$.
\end{example}

\begin{example}
\label{ex: richardson}
Consider a pair of permutations $w,u\in S_n$ such that $w\geq u$ in the Bruhat order. Then 
$$
X(\beta(w)\beta(u^{-1}w_0);w_0)=R_{w,u}
$$
is the open Richardson variety in the complete flag variety in $\C^n$. Here $\beta(w)$ and $\beta(u^{-1}w_0)$ are positive braid lifts of permutations $w$ and $u^{-1}w_0$ respectively.
\end{example}

If in addition $w$ satisfies the so-called $k$-Grassmannian condition, then by \cite{KLS} the open Richardson variety $R_{w,u}$ is isomorphic to a more general positroid variety $\Pi_{w,u}\subset \mathrm{Gr}(k,n)$, the variety $\Pi_{k,n}$ corresponds to the case when $u = 1$ and $w$ is the maximal $k$-Grassmannian permutation. In \cite{CGGS2} the variety $\Pi_{w,u}$ is proven to be isomorphic (up to $\C^*$ factors) to braid varieties for four different braids, some on $n$ strands (like in Example \ref{ex: richardson}) and some on $k$ strands (like in Lemma \ref{ex: positroid}). We refer to \cite{CGGS2} for more details.

Finally, we would like to mention an emerging connection between braid varieties and {\bf cluster varieties}, although the latter are out of scope of these notes. In particular, by \cite{SSBW,GL3} all positroid varieties $\Pi_{w,u}$ are cluster varieties, while by \cite{GaoSW} certain braid varieties also have the structure of cluster varieties.

\begin{example}
\label{ex: cluster A}
The braid variety $X(\sigma^{k};w_0)$ corresponding to the $(2,k-1)$ torus link, is isomorphic (up to a certain torus) to the positroid variety $\Pi_{2,k+1}$, and to the cluster variety of type $A_{k-2}$.
\end{example}

\begin{example}
\label{ex: E6}
The braid variety $X((\sigma_1\sigma_2)^4\Delta;w_0)$ corresponding to the $(3,4)$ torus knot, is isomorphic (up to a certain torus) to the positroid variety $\Pi_{3,7}$, and to the cluster variety of type $E_6$.
\end{example}

It is yet unclear how the cluster structure reveals itself in link homology. However, one piece of this structure already plays an important role: any cluster variety carries a canonical closed 2-form \cite{GeShV} (which is symplectic under nice circumstances) which has constant coefficients in all cluster charts. This form yields an interesting operator in link homology of homological degree 2, which is constructed for all (not necessary positive) braids in the next section.

\section{$y$-ification and tautological classes}

In this lecture we describe various homological operations in Khovanov-Rozansky homology.

\subsection{Polynomial action}

Recall that to any braid $\beta$ we associate the Rouquier complex $T_{\beta}$ consisting of $R-R$ bimodules, or, equivalently, a complex of $\C[x_1,\ldots,x_n,x'_1,\ldots,x'_n]$-modules. Let us describe the properties of this complex.

First, observe that for any symmetric function $f(x_1,\ldots,x_n)\in \C[x_1,\ldots,x_n]^{S_n}$ the actions of $f(x_1,\ldots,x_n)$ and of $f(x'_1,\ldots,x'_n)$ on $T_{\beta}$ coincide. Indeed, this is true for $B_i$ and arbitrary products of $B_i$, and hence for any Soergel bimodule.

More abstractly, we can consider a bimodule
$$
B:=B_{w_0}=R\otimes_{R^{S_n}}R=\frac{\C[x_1,\ldots,x_n,x'_1,\ldots,x'_n]}{f(x_1,\ldots,x_n)=f(x'_1,\ldots,x'_n)\ \text{for any}\ f\in \C[x_1,\ldots,x_n]^{S_n}}
$$
Note that the bimodule $B$ is also an algebra as a quotient of the polynomial algebra. By the above, we obtain the following
\begin{lemma}
The action of $\C[x_1,\ldots,x_n,x'_1,\ldots,x'_n]$ on the Rouquier complex $T_{\beta}$ factors through the action of $B$.
\end{lemma}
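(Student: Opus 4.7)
The plan is to show that for every symmetric polynomial $f \in R^{S_n}$, the operators $f(x_1,\ldots,x_n)$ and $f(x'_1,\ldots,x'_n)$ act identically on $T_\beta$; this is exactly the statement that the $\C[x,x']$-action annihilates the defining ideal of $B$.

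First, I would verify the coincidence of the two actions on the generating bimodules $B_i = R \otimes_{R^{s_i}} R(1)$. Since $R^{S_n} \subseteq R^{s_i}$, any symmetric $f$ lies in $R^{s_i}$, so on a simple tensor $a \otimes b$
\[
f(x)\cdot (a \otimes b) = (fa)\otimes b = a \otimes (fb) = (a \otimes b)\cdot f(x'),
\]
the middle equality being the balancing relation for $\otimes_{R^{s_i}}$. On $R$ itself the statement is trivial, since $x_i$ and $x'_i$ act by the same element.

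Next, for an iterated tensor product $B_{i_1} \otimes_R \cdots \otimes_R B_{i_r} = R \otimes_{R^{s_{i_1}}} R \otimes_{R^{s_{i_2}}} \cdots \otimes_{R^{s_{i_r}}} R$, I would slide $f$ across every tensor factor using that $f \in R^{s_{i_j}}$ for every $j$: on a pure tensor $a_0 \otimes a_1 \otimes \cdots \otimes a_r$,
\[
f(x)\cdot (a_0 \otimes a_1 \otimes \cdots \otimes a_r) = (fa_0)\otimes a_1 \otimes \cdots \otimes a_r = \cdots = a_0 \otimes \cdots \otimes (fa_r) = (a_0 \otimes \cdots \otimes a_r)\cdot f(x').
\]
The property is plainly stable under direct sums, direct summands, and grading shifts, and hence holds on every object of $\SBim_n$. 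This proves the bimodule-level claim.

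Finally, I would upgrade to the Rouquier complex $T_\beta = T_{i_1}^{\epsilon_1} \otimes_R \cdots \otimes_R T_{i_r}^{\epsilon_r}$. Each term of this complex is a tensor product of the Soergel bimodules $R$ and $B_i$ (with shifts), so by the previous paragraph, $f(x)$ and $f(x')$ act by the same operator on every term. Moreover, all the differentials appearing in the $T_i^{\pm 1}$ (those of Lemma \ref{lem: problem 2}) are bimodule maps, so they commute with both the left and the right polynomial action; thus the common action on each term assembles into a well-defined chain-level endomorphism. Therefore $f(x) - f(x')$ vanishes on $T_\beta$, and the $\C[x,x']$-action on $T_\beta$ descends through the surjection $\C[x,x'] \twoheadrightarrow B$, as claimed.

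I do not expect any real obstruction here; the only point requiring minor care is the compatibility of the slide-across argument with the iterated tensor product, which is handled by using $R^{S_n} \subseteq R^{s_i}$ for every $i$ simultaneously.
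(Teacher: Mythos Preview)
Your proof is correct and follows exactly the same approach as the paper: the paper simply notes (in the paragraph immediately preceding the lemma) that for any symmetric $f$ the left and right actions agree on each $B_i$ and on arbitrary products of $B_i$, and states the lemma as an immediate consequence. You have just written out this sketch in full detail, including the slide-across argument over iterated tensor products and the observation that the differentials are bimodule maps.
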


Next, we can compare the actions of $x_i$ and $x'_i$. More precisely:
\begin{theorem}
\label{th: homotopy}
The action of $x_i$ on $T_\beta$ is homotopic to the action of $x'_{w(i)}$ where $w$ is the permutation corresponding to the braid $\beta$. 
\end{theorem}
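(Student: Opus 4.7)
The plan is to reduce the statement to the case of a single Rouquier generator $T_i^{\pm 1}$ by a telescoping argument, and then construct the homotopy at that level explicitly from the morphisms of Lemma \ref{lem: problem 2}. Concretely, write $T_\beta = T_{i_1}^{\epsilon_1} \otimes_R \cdots \otimes_R T_{i_r}^{\epsilon_r}$ and suppose that for each generator $T_i^{\pm 1}$ and each index $j$ one can exhibit a chain homotopy between the left action of $x_j$ and the right action of $x'_{s_i(j)}$. Since in $M \otimes_R N$ the right $R$-action on $M$ coincides with the left $R$-action on $N$, one can telescope: the left action of $x_j$ on the first factor $T_{i_1}^{\epsilon_1}$ is homotopic to $x'_{s_{i_1}(j)}$ on the right, which equals the left action of $x_{s_{i_1}(j)}$ on $T_{i_2}^{\epsilon_2}$, homotopic to $x'_{s_{i_2}s_{i_1}(j)}$, and so on. After $r$ steps this witnesses that left multiplication by $x_j$ on $T_\beta$ is homotopic to right multiplication by $x'_{w(j)}$, with $w$ the underlying permutation of $\beta$.

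For the single-crossing case the only content is $j \in \{i, i+1\}$: when $j \notin \{i, i+1\}$ the defining relations of $B_i$ give $x_j = x'_j$ as operators on both $R$ and $B_i$, so the homotopy is zero. Take $j = i$ (the $j = i+1$ case is symmetric under $s_i$); the goal is to null-homotope the operator $\Phi := x_i - x'_{i+1}$ on $T_i = [B_i(-1) \xrightarrow{m} \underline{R}]$. I would use the bimodule map $\mu: R \to B_i(1)$ from Lemma \ref{lem: problem 2}(b) sending $1 \mapsto x_i - x'_{i+1}$. Two identities hold at once:
\begin{equation*}
  m \circ \mu = (x_i - x_{i+1})\cdot \mathrm{id}_R, \qquad \mu \circ m = \Phi \in \End_{R\text{-bimod}}(B_i),
\end{equation*}
the first by direct computation ($m$ sends $x_i - x'_{i+1}$ to $x_i\cdot 1 - 1\cdot x_{i+1}$), and the second because both are bimodule endomorphisms of $B_i$ agreeing on the cyclic generator $1 \otimes 1$. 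Since $\Phi$ restricts on the identity bimodule $R$ to the scalar operator $x_i - x_{i+1}$, setting $h := \mu$ (viewed as a map $R \to B_i(-1)$ of bidegree $(-1,2)$) yields $dh + hd = m\mu + \mu m = \Phi$, the desired null-homotopy of $x_i - x'_{i+1}$ on $T_i$. The case of $T_i^{-1} = [\underline{R} \xrightarrow{\mu} B_i(1)]$ is dual, using the projection $B_i(-1) \to R$ of Lemma \ref{lem: problem 2}(a) in place of $\mu$, with the analogous pair of identities yielding a homotopy in the opposite direction.

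The main technical point is the identity $\mu \circ m = \Phi$ on $B_i$. This rests on the bimodule property of $\mu$, which is where the defining relations $x_i + x_{i+1} = x'_i + x'_{i+1}$ and $x_i x_{i+1} = x'_i x'_{i+1}$ of $B_i$ really enter; once $\mu$ is known to be a bimodule map, the verification that $\mu \circ m$ equals multiplication by $\Phi$ need only be checked on the cyclic generator $1 \otimes 1$, where both sides visibly equal $x_i - x'_{i+1}$, and then propagates by bimodule linearity. The secondary difficulty is the Koszul-sign bookkeeping when assembling a single explicit homotopy on $T_\beta$ out of the individual $h_k$'s on each factor, but for the existence statement of the theorem the iterated chain of homotopies from the telescoping step is already enough, and no global Leibniz formula is required.
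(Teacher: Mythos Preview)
Your proof is correct and follows essentially the same approach as the paper: reduce to a single crossing by telescoping the dot-sliding homotopies across the tensor factors (this is exactly what the paper does in Example~\ref{ex: sigma2}), and for a single $T_i$ use the map $\mu\colon R\to B_i$ of Lemma~\ref{lem: problem 2}(b) as the homotopy, which is precisely the paper's Lemma~\ref{lem: homotopy crossing}. Your verification of $\mu\circ m=\Phi$ via cyclicity of $B_i$ as a bimodule is a clean way to phrase the check, and your explicit treatment of $T_i^{-1}$ (which the paper leaves implicit) is a welcome addition.
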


\begin{example}
For example, consider the braid $\beta$:
\begin{center}
\begin{tikzpicture}
\draw (2,0)..controls (2,0.5) and (3,0.5)..(3,1);
\draw [white, line width=3](3,0)..controls (3,0.5) and (1,0.5)..(1,1);
\draw (3,0)..controls (3,0.5) and (1,0.5)..(1,1);
\draw [white, line width=3] (1,0)..controls (1,0.5) and (2,0.5)..(2,1);
\draw (1,0)..controls (1,0.5) and (2,0.5)..(2,1);
\draw (3,1)..controls (3,1.5) and (1,1.5)..(1,2);
\draw [white, line width=3] (2,1)..controls (1.9,1.5) and (1,1.5) ..(2,2);
\draw (2,1)..controls (1.9,1.5) and (1,1.5) ..(2,2);
\draw [white, line width=3] (1,1)..controls (1,1.5) and (3,1.5)..(3,2);
\draw (1,1)..controls (1,1.5) and (3,1.5)..(3,2);
\draw (1,-0.2) node {\scriptsize $x_1$};
\draw (2,-0.2) node {\scriptsize $x_2$};
\draw (3,-0.2) node {\scriptsize $x_3$};
\draw (1,2.2) node {\scriptsize $x'_1$};
\draw (2,2.2) node {\scriptsize $x'_2$};
\draw (3,2.2) node {\scriptsize $x'_3$};
\end{tikzpicture}
\end{center}
We have $x_1\sim x'_2, x_2\sim x'_1$ and $x_3\sim x'_3$ on the corresponding Rouquier complex $T_{\beta}$. Note that after the braid closure we identify $x_i$ with $x'_i$, so that $x_1$ becomes homotopic to $x_2$, $x_2$ to $x_1$ and $x_3$ to itself. 
\end{example}

Theorem \ref{th: homotopy} has a very important consequence for the structure of link homology.

\begin{corollary}
Suppose that a braid $\beta$ closes up to a link $L$ with $r$ components. Then $\HHH(L)$ is naturally a module over a polynomial ring in $r$ variables, one variable per component of $L$.
\end{corollary}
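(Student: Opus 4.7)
The plan is to show that, on $\HHH(L)$, the action of $x_i$ depends only on the orbit of $i$ under the permutation $w$ induced by the braid, and that these orbits are in bijection with the link components.

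First I would observe that $\HHH(\beta)$ carries a canonical action of $R = \C[x_1,\dots,x_n]$: since $\HH^\bullet(M) = \Ext^\bullet_{R\text{-bimod}}(R,M)$ is computed against the diagonal bimodule, the left and right actions of $R$ on any bimodule $M$ induce the same action on $\HH^\bullet(M)$. In particular, on $\HH(T_\beta)$ (and hence on its cohomology $\HHH(\beta)$) we already have the identification $x_i = x'_i$ for all $i$. This gives a well-defined $R$-module structure, but we still have all $n$ generators acting independently.

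Next I would invoke Theorem \ref{th: homotopy}: on $T_\beta$, the operator $x_i$ is chain-homotopic to $x'_{w(i)}$, where $w \in S_n$ is the underlying permutation of $\beta$. Applying the functor $\HH^\bullet(-)$ (which is additive and hence preserves chain homotopies), and then passing to cohomology to kill the Rouquier differential, we conclude that $x_i$ and $x'_{w(i)}$ induce equal operators on $\HHH(\beta)$. Combined with the identification $x'_{w(i)} = x_{w(i)}$ from the previous step, we obtain
\[
    x_i = x_{w(i)} \quad \text{on } \HHH(\beta) \text{ for all } i = 1, \dots, n.
\]

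Iterating, $x_i = x_{w^k(i)}$ for every $k \in \Z$, so the action of $x_i$ depends only on the $w$-orbit of $i$. By the standard description of braid closure, the connected components of $L$ are in bijection with the cycles of $w$; hence there are exactly $r$ orbits. Picking one index from each orbit therefore gives $r$ elements $y_1, \dots, y_r$ that generate the image of $R$ in $\End(\HHH(L))$, and the action of $R$ factors through the quotient $R \twoheadrightarrow R/\langle x_i - x_{w(i)} : i=1,\dots,n\rangle \cong \C[y_1,\dots,y_r]$, with $y_c$ naturally labeled by the $c$-th component of $L$.

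I do not expect a serious obstacle: the content is packaged entirely in Theorem \ref{th: homotopy} and the elementary fact that $\HH$ identifies left and right $R$-actions. The only point requiring minor care is that $x_i$ is a genuine endomorphism of the complex $T_\beta$ (not merely a homotopy class), so that it descends to a well-defined operator after passing through $\HH$ and then through the Rouquier cohomology; this is immediate because multiplication by $x_i$ is a bimodule map on every term of $T_\beta$ and commutes strictly with all differentials of Rouquier type before one invokes the homotopy $x_i \sim x'_{w(i)}$.
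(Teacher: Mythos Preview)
Your proposal is correct and follows precisely the approach the paper intends: the Corollary is stated in the paper as an immediate consequence of Theorem \ref{th: homotopy}, with the only accompanying remark being that the components of $L$ correspond to the cycles of the permutation $w$. You have spelled out the implicit steps---that Hochschild cohomology identifies the left and right $R$-actions, that the homotopy $x_i \sim x'_{w(i)}$ survives to an equality on $\HHH$, and that iterating gives the factorization through $\C[y_1,\dots,y_r]$---all of which are exactly what the paper leaves to the reader.
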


Note that the components of $L$ correspond to the cycles in the corresponding permutation $w$.

\subsection{Dot-sliding homotopies and $y$-ification}

The following lemma outlines the proof of Theorem \ref{th: homotopy} for a single crossing. The general case is obtained by combining these elementary ``dot-sliding homotopies".

\begin{lemma} 
\label{lem: homotopy crossing}
The actions of  $x_i$ and $x'_{i+1}$  on the Rouquier complex $T_i=[B_i\to R]$ are homotopic.
\end{lemma}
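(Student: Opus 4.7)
The plan is to exhibit an explicit chain homotopy between the endomorphisms $x_i$ and $x'_{i+1}$ of the two-term complex $T_i = [B_i(-1) \xrightarrow{d} \underline{R}]$. Both endomorphisms have homological degree $0$ and internal degree $+2$, and they differ by $\mu := x_i - x'_{i+1}$. On $R$ the right action of $x'_{i+1}$ coincides with left multiplication by $x_{i+1}$, so $\mu|_R = x_i - x_{i+1}$; on $B_i$, however, $x_i$ and $x'_{i+1}$ act genuinely differently. A null-homotopy for $\mu$ must therefore consist of a bimodule map $h: R \to B_i(-1)$ of internal degree $+2$, together with the zero map on $B_i(-1)$ for degree reasons (nothing sits in homological degree $-2$).

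The natural candidate is precisely the map produced in Lemma \ref{lem: problem 2}(b): the bimodule map $h$ sending $1 \mapsto x_i - x'_{i+1} = x_i \otimes 1 - 1 \otimes x_{i+1} \in B_i$. (Lemma \ref{lem: problem 2}(b) stated this as a map $R \to B_i(1)$; viewed as a map into $B_i(-1)$ the same map has internal degree $+2$, as required.) With $h$ so defined, the homotopy identity $dh + hd = \mu$ reduces, termwise, to two checks. On $R$, $d(h(1)) = d(x_i \otimes 1 - 1 \otimes x_{i+1}) = x_i - x_{i+1} = \mu|_R$, as needed. On $B_i(-1)$, using that $h$ is a bimodule map, one computes $h(d(a \otimes b)) = h(ab) = a \cdot h(1) \cdot b = a x_i \otimes b - a \otimes x_{i+1} b$, which agrees with $\mu(a \otimes b) = x_i a \otimes b - a \otimes x_{i+1} b$ by the commutativity of $R$.

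There is no substantive obstacle: the homotopy essentially writes itself once one recognizes that the central element $x_i - x'_{i+1} \in B_i$ simultaneously generates the second nontrivial bimodule map of Lemma \ref{lem: problem 2} and measures the failure of $x_i$ and $x'_{i+1}$ to coincide on $B_i$. Pictorially, this is the ``dot-sliding'' homotopy alluded to in the text: a dot labeled $x_i$ can be pushed across a positive crossing to become a dot labeled $x'_{i+1}$ on the other strand, at the cost of a homotopy factoring through $B_i$. Iterating this single-crossing computation and tracking the strand permutation is what upgrades the lemma to Theorem \ref{th: homotopy} for an arbitrary braid $\beta$.
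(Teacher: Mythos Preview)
Your proof is correct and follows essentially the same approach as the paper's: you take the homotopy $h:R\to B_i$ to be the bimodule map $1\mapsto x_i\otimes 1 - 1\otimes x_{i+1}$ from Lemma~\ref{lem: problem 2}(b) and verify the identity $dh+hd=x_i-x'_{i+1}$ termwise on $R$ and on $B_i$. The paper states the same $h$ and declares the verification straightforward; you have simply written that verification out in full.
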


\begin{proof}
We want to construct a map $h: R \to B_i$ satisfying $hd(f \otimes_{R^{s_{i}}}g) = x_{i}f\otimes_{R^{s_{i}}} g - f\otimes_{R^{s_{i}}} gx_{i+1}$ and $dh(f) = (x_i - x_{i+1})f$, where $d = m$ is the unique bimodule bap $B_{i} \to R$ sending $1 \otimes_{R^{s_{i}}} 1$ to $1$. Applying $hd(1 \otimes_{R^{s_{i}}} 1)$, it is clear that we must have $h(1) = x_{i} \otimes_{R^{s_{i}}}1 - 1 \otimes_{R^{s_{i}}} x_{i+1}$. This is exactly the map $\Delta$ constructed in Lemma \ref{lem: problem 2}. Now it is straightforward to check that $h$ indeed gives a homotopy between $x_i$ and $x'_{i+1}$. 
\begin{center}
\begin{tikzcd}
B \arrow[r, "d"] & R \ar[dl, "x_i - x'_{i+1}" near start] \\
B \arrow[r, "d"] & R
\end{tikzcd}
\end{center}
\end{proof}

\begin{example}
\label{ex: sigma2}
Let us describe an explicit homotopy between the left and right $R$-actions for the two-strand braid $\sigma^2$.

Thanks to Lemma \ref{lem: homotopy crossing}, we have a homotopy between $x_1$ and $x'_2$ on $T$, so $(x_1 - x'_{2}) \otimes 1$ is null-homotopic on $T\otimes T$, such a homotopy is given by $h \otimes 1$. Similarly, $1 \otimes (x_{2} - x_{1}')$ is null-homotopic. But $x_{2}' \otimes 1 = 1 \otimes x_{2}$. Thus, $h \otimes 1 - 1 \otimes h$ gives a homotopy between $x_{1} \otimes 1$ and $1 \otimes x'_{1}$. It is easy to see that the same map $h \otimes 1 - 1 \otimes h$ gives a homotopy between $x_{2} \otimes 1$ and $1 \otimes x'_{2}$ (perhaps up to a scalar multiple). Alternatively, if we actually want to do computations, we have:
\begin{center}
    \begin{tikzcd}
    B \ar[r, "x_1 - x'_1"] & B \ar[r, "d"] \ar[dl, "1" near start] & R \ar[dl, "0" near start] \\ B \ar[r, "x_1 - x'_1"] & B \ar[r, "d"] & R
    \end{tikzcd}
\end{center}
The above complex works because, on $R$, $x_1 - x'_1 = x_2 - x'_2 = 0$ and, on $B$, $x_1 + x_2 = x'_1 + x'_2$, equivalently $x_1 - x'_1 = x'_2 - x_2$, so the diagram shows that both $x_1 - x'_1$ and $x'_2 - x_2$ are null-homotopic, as wanted.
\end{example}

Let $\xi_i$ be the homotopies between $x_i$ and $x'_{w_i}$, that is,
$$
[d,\xi_i]=x_i-x'_{w(i)}
$$
We can choose $\xi_i$ so that they square to zero and anticommute (since this holds for a single crossing by Lemma \ref{lem: homotopy crossing}). Now we can introduce formal variables $y_1,\ldots,y_n$ and consider the deformed differential 
\begin{equation}
    D=d+\sum_{i=1}^{n}\xi_i y_i.
\end{equation}
From the above discussion, we get 
$$
D^2=\sum_{i=1}^{n}(x_i-x'_{w(i)})y_i
$$
which vanishes after closing the braid and identifying different $y_i$ on the same link component. This allows one to define a deformation (or $y$-ification) of Khovanov-Rozansky homology:
$$
\HY(\beta)=H^*\left(\HH(T_{\beta}\otimes \C[y_1,\ldots,y_n],D)/(y_i-y_{w(i)})\right).
$$
It has the following properties:
\begin{theorem}[\cite{GH}]
The $y$-ified link homology is invariant under conjugation and stabilization and defines a topological link invariant. 
If a braid $\beta$ closes up to a link $L$ with $r$ components then $\HY(L)$ is naturally a module over $\C[x_1,\ldots,x_r,y_1,\ldots,y_r]$.
\end{theorem}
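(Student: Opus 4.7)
The plan is to address the three assertions separately: the module structure, conjugation invariance, and stabilization invariance. The first two are essentially formal consequences of the construction together with the trace-like property of $\HH$; the real work lies in stabilization, and in particular in seeing that \emph{negative} stabilization now preserves $\HY$ on the nose (unlike ordinary Khovanov--Rozansky homology).

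For the module structure, the polynomial ring $\C[x_1,\ldots,x_n,y_1,\ldots,y_n]$ acts on $T_\beta \otimes \C[y_1,\ldots,y_n]$ and commutes with the deformed differential $D = d + \sum_i \xi_i y_i$, so after applying $\HH$ and passing to cohomology it acts on $\HY(\beta)$. The crucial observation is that each $\xi_i$ remains a homotopy between $x_i$ and $x'_{w(i)}$ with respect to $D$, not merely with respect to $d$:
\[
[D,\xi_i] \;=\; [d,\xi_i] + \sum_j y_j(\xi_j\xi_i + \xi_i\xi_j) \;=\; x_i - x'_{w(i)},
\]
since the $\xi_j$'s anticommute and square to zero. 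On Hochschild cohomology the left and right $R$-actions always coincide, so this identifies $x_i$ with $x_{w(i)}$ on $\HY(\beta)$. Combined with the explicit quotient $y_i = y_{w(i)}$, the polynomial action factors through $\C[x_1,\ldots,x_r,y_1,\ldots,y_r]$ with one generator per cycle of $w$, and hence one per component of $L$.

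For conjugation invariance, writing $\beta = \alpha\gamma$ and $\beta' = \gamma\alpha$, the Rouquier complexes are related by $T_\beta \simeq T_\alpha \otimes_R T_\gamma$ and $T_{\beta'} \simeq T_\gamma \otimes_R T_\alpha$, and the trace isomorphism $\HH(M \otimes_R N) \simeq \HH(N \otimes_R M)$ gives the identification at the undeformed level. To upgrade this to the $y$-ified setting I would verify that, because each $\xi_i$ is built locally crossing-by-crossing via Lemma \ref{lem: homotopy crossing}, cyclic reordering of the braid word permutes the $\xi_i$'s consistently with the conjugated permutation $w(\beta') = w(\alpha)^{-1}w(\beta)w(\alpha)$ and with the relabeling of the $y_i$'s; the trace isomorphism then descends to $\HY$.

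The main obstacle is stabilization invariance. For positive stabilization $\beta \mapsto \beta\sigma_n$ on $n+1$ strands, the standard Markov trace argument applies: $T_{\beta\sigma_n} \simeq T_\beta \otimes T_n$, and one computes $\HH_{\C[x_{n+1}]}$ of $B_n$ and of $R$ along the new strand, with the new dot-sliding homotopy $\xi_n$ and new variable $y_{n+1}$ pairing compatibly. The decisive case is negative stabilization $\beta \mapsto \beta\sigma_n^{-1}$: in ordinary Khovanov--Rozansky homology the leftover complex is non-contractible and shifts the $A$-grading by one, but in $\HY$ the insertion of the term $y_{n+1}\xi_n$ into $D$ should make this leftover complex genuinely contractible via a Koszul-type homotopy pairing $y_{n+1}$ against $\xi_n$. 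Constructing that contracting homotopy explicitly, and checking its compatibility with the remaining $\xi_j$'s, with the $R$-bimodule structure, and with the identification $y_i = y_{w(i)}$ that arises after closure, is the technical heart of the argument of \cite{GH} and the step I would expect to require the most care.
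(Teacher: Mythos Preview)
The paper does not prove this theorem; it is stated with attribution to \cite{GH} and the text passes immediately to Example~\ref{ex: yified homology two strands}. There is therefore no proof in these notes to compare your proposal against.

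That said, your outline is a faithful sketch of the strategy one finds in \cite{GH}. The computation $[D,\xi_i]=x_i-x'_{w(i)}$ is correct, and combined with the identification of left and right $R$-actions on Hochschild cohomology it yields the factorization of the $x$-action through cycles of $w$; together with the imposed relation $y_i=y_{w(i)}$ this gives the asserted module structure. Conjugation via the trace property of $\HH$ is the standard route. You are also right that negative stabilization is where the $y$-ification earns its keep: in the undeformed theory the partial trace of $T_n^{-1}$ leaves an $A$-shifted copy of $R$, whereas after $y$-ifying the new variable and the new dot-sliding homotopy assemble into a Koszul-type cancellation that removes the shift. You correctly flag that making this precise---in particular checking compatibility of the contracting homotopy with the remaining $\xi_j$'s and with the identifications $y_i=y_{w(i)}$---is the genuine technical content, and that content lives in \cite{GH} rather than in these lecture notes.
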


\begin{example}
\label{ex: yified homology two strands}
Let us compute the $y$-ified homology of $\sigma^2$. By Example \ref{ex: sigma2}, the $y$-fied complex for $\sigma^2$ has the form
\begin{center}
    \begin{tikzcd}
    B[y] \arrow[r,bend left, "d"] & B[y] \arrow[r, "d"] \arrow[l,bend left, "y_1-y_2"] & R[y]
    \end{tikzcd}
\end{center}
If we apply $\Hom(R,-)$ (or, equivalently, compute $y$-ified homology in $a$-degree zero) to this, we get 
\begin{center}
    \begin{tikzcd}
    R[y] \arrow[r,bend left, "0"] & R[y] \arrow[r, "x_1-x_2"] \arrow[l,bend left, "y_1-y_2"] & R[y]
    \end{tikzcd}
\end{center}
The homology of this complex is an $R[y]$-module with two generators (say, $z$ and $w$) and one relation $z(y_1-y_2)=w(x_1-x_2)$. This module is isomorphic to an ideal in $R[y]$ generated by $x_1-x_2$ and $y_1-y_2$.
\end{example}

\begin{remark}
The complex with backward pointing arrows might look unusual to the reader. We can rewrite it in a more conventional way by ``unrolling'' the variable $\mathbf{y}=y_1-y_2$, which has homological degree $2$, and writing $R[y]=\bigoplus_{k=0}^{\infty}\mathbf{y}^{k}R[y_1+y_2]$:
\begin{center}
\begin{tikzcd}
    R[y_1+y_2] \arrow{r}{0}& R[y_1+y_2]\arrow{r}{x_1-x_2} \arrow{dl}{\mathbf{y}}& R[y_1+y_2]\\
    \mathbf{y}R[y_1+y_2] \arrow{r}{0}& \mathbf{y}R[y_1+y_2] \arrow{r}{x_1-x_2} \arrow{dl}{\mathbf{y}} & \mathbf{y}R[y_1+y_2]\\
    \mathbf{y}^2R[y_1+y_2]\arrow{r}{0} & \mathbf{y}^2R[y_1+y_2] \arrow{r}{x_1-x_2} \arrow{dl}{}& \mathbf{y}^2R[y_1+y_2]\\
    \vdots & \vdots & \vdots\\
\end{tikzcd}
\end{center}
\end{remark}

This example can be generalized as follows:

\begin{theorem}[\cite{GH}]
\label{th: full twist}
Let $T(n,kn)$ be the $(n,kn)$ torus link with $n$ components. The corresponding braid is the $k$-th power of the full twist on $n$ strands ($k\ge 0$). Then:

(a) $\HY(T(n,kn))\simeq \CJ^k$ where 
$$
\CJ=\cap_{i\neq j}(x_i-x_j,y_i-y_j,\theta_i-\theta_j)
$$
is the ideal in $\HY(\mathrm{unlink})=\C[x_1,\ldots,x_n,y_1,\ldots,y_n,\theta_1,\ldots,\theta_n]$. In Hochschild degree zero we get
$$
\HY^{A=0}(T(n,kn))\simeq J^k,
$$
where
$$
J=\cap_{i\neq j}(x_i-x_j,y_i-y_j)\subset \C[x_1,\ldots,x_n,y_1,\ldots,y_n]
$$
is the ideal defining the union of all diagonals in $(\C^2)^n$.

(b) The ideals $\CJ$ and $J$ are free over $\C[y_1,\ldots,y_n]$ and $$
\HHH(T(n,kn))=\CJ^k/(y)\CJ^k,\ \HHH^0(T(n,kn))=J^k/(y)J^k.
$$
\end{theorem}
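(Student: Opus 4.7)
The plan is to induct on $k$, with the base case $k=0$ being immediate: the $n$-component unlink comes from the identity braid, whose Rouquier complex is $R$ itself, and a direct computation gives $\HY(\mathrm{unlink}_n) = \C[x_1,\ldots,x_n,y_1,\ldots,y_n] \otimes \wedge[\theta_1,\ldots,\theta_n]$, where the exterior generators $\theta_i$ of $A$-degree $1$ arise from the Koszul resolution of $R$ as an $R$-bimodule. This agrees with $\CJ^0$ and is manifestly free over $\C[y_1,\ldots,y_n]$.

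The crucial step is the case $k=1$: showing $\HY(\FT_n) \simeq \CJ$ inside $\HY(\mathrm{unlink}_n)$. I would expand the Rouquier complex for $\FT_n$ as an iterated cone, form the deformed differential $D = d + \sum \xi_i y_i$ using the dot-sliding homotopies $\xi_i$ of Lemma~\ref{lem: homotopy crossing}, and apply $\HH^*(-)$ term by term. Since the permutation underlying $\FT_n$ is trivial, after closure all identifications $x_i = x'_i$ hold and the deformed differentials become multiplications by the elementary elements $x_i - x_j$, $y_i - y_j$, $\theta_i - \theta_j$. The resulting complex should be quasi-isomorphic to a Koszul-type resolution whose image in $\HY(\mathrm{unlink}_n)$ is precisely $\cap_{i\neq j}(x_i-x_j,y_i-y_j,\theta_i-\theta_j) = \CJ$. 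A natural organizational tool is the Hogancamp projector $K_n$ together with the recursions (1)--(4) recorded before Theorem~\ref{thm: parity}, which absorb the full twist up to a scalar $(t^n+a)$ and reduce the computation to an explicit combinatorial one.

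For the inductive step $k-1 \Rightarrow k$, I would use that $\FT$ is central in the braid group, so its Rouquier complex acts as a central natural transformation on the category $\CK_n$. This yields a multiplication map
$$
\mu \colon \HY(\FT) \otimes_{\HY(\mathrm{unlink}_n)} \HY(\FT^{k-1}) \longrightarrow \HY(\FT^k),
$$
and one needs to show $\mu$ is an isomorphism whose image is $\CJ \cdot \CJ^{k-1} = \CJ^k$. For part (b), freeness of $\CJ$ over $\C[y_1,\ldots,y_n]$ follows either from an explicit Koszul-type resolution of the union of pairwise diagonals or, geometrically, from the free translation action in the $y$-direction showing that the diagonal locus is flat over the $y$-axis; freeness of $\CJ^k$ then follows formally. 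The specialization $\CJ^k/(y)\CJ^k$ recovers $\HHH^0(T(n,kn))$ directly from the definition of $y$-ification. The main obstacle is the base case $k=1$: matching the $y$-ified Hochschild cohomology of the Rouquier complex of $\FT_n$ with the Koszul complex defining $\CJ$ is the technical heart of the argument, and keeping track of the deformed differential through the iterated cones is where the real work lies.
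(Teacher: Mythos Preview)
The paper does not prove this theorem; it is stated with a citation to \cite{GH} and used as input elsewhere (see Theorem~\ref{th: isospectral} and the discussion in Section~7). So there is no in-paper proof to compare against, but your sketch has real gaps that would need to be filled before it constitutes a proof.

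The most serious gap is your treatment of freeness in part (b). Freeness of $J$ and of $J^k$ over $\C[y_1,\ldots,y_n]$ is \emph{not} a formal consequence of a Koszul-type resolution or of translation invariance in the $y$-direction. Translation invariance of the diagonal arrangement tells you nothing beyond the fact that $J$ is generated in the translation-invariant subring; it does not imply flatness, let alone freeness, over $\C[y]$. This freeness is precisely Haiman's polygraph theorem \cite{Haiman}, which the present paper invokes in Theorem~\ref{th: isospectral}(b), and it is a genuinely hard commutative-algebra result with no known short proof. Your claim that ``freeness of $\CJ^k$ then follows formally'' from freeness of $\CJ$ is also unjustified: powers of a free ideal need not be free.

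A second gap is the $k=1$ step. Saying that after closure ``the deformed differentials become multiplications by $x_i-x_j$, $y_i-y_j$, $\theta_i-\theta_j$'' and that the result ``should be quasi-isomorphic to a Koszul-type resolution'' is not an argument; the Rouquier complex for $\FT_n$ has $2^{n(n-1)}$ terms and its $y$-ified Hochschild complex does not visibly organize into the Koszul complex of the diagonal arrangement. The projectors $K_n$ and the recursions you cite are established in the undeformed setting and do not transfer to the $y$-ified theory without additional work; in particular ``absorbing a crossing'' fails once the backward $y$-arrows are turned on. The actual argument in \cite{GH} proceeds by an induction on $n$ (not on $k$) via Jucys--Murphy braids, producing an explicit map from $\HY(\FT_n)$ into $\HY(\mathrm{unlink}_n)$ and identifying its image.

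Finally, your inductive step for $k$ produces a map $\mu$ but gives no reason for injectivity. Even granting that its image is $\CJ\cdot\CJ^{k-1}=\CJ^k$, you need $\HY(\FT^k)$ to have the correct size, which in \cite{GH} is again controlled by Haiman's freeness together with the parity results of \cite{EH,HM}.
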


Note that in Theorem \ref{th: full twist} it is much easier to first describe the $y$-ified homology, and then obtain $\HHH$ as a quotient by the maximal ideal in $y_i$. This seems to indicate that $y$-ified homology has better properties than $\HHH$, as we see in the next section.

\subsection{Tautological classes and symmetry}

Consider the dg algebra $\CA$ which is a free resolution of $R$ over $B=R\bigotimes_{R^{S_n}}R$:
$$
\mathcal{A}=B[u_1,\ldots,u_n,\xi_1,\ldots,\xi_n],\ d(\xi_i)=x_i-x'_i,\ d(u_k)=\sum_{i=1}^{n}h_{k-1}(x_i,x'_i)\xi_i,
$$

\noindent where $h_{k-1}(x_i, x'_i)$ denotes the complete symmetric function of degree $k-1$ on the variables $x_i, x'_i$, and the grading on $\mathcal{A}$ is given by $\deg(x_i) = \deg(x'_i) = 0$, $\deg(\xi_i) = 1$, and $\deg(u_i) = 2$ for $i = 1, \dots, n$.

\begin{lemma} 
We have $d^2=0$ in $\mathcal{A}$. 
\end{lemma}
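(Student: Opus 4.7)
The plan is to check $d^2=0$ on a set of algebra generators, using that $d$ is (graded) a derivation on $\mathcal{A} = B[u_1,\ldots,u_n,\xi_1,\ldots,\xi_n]$. On the generators of $B$, namely the $x_i, x'_i$, the differential is zero, so $d^2$ trivially vanishes there. On $\xi_i$ we have $d^2(\xi_i) = d(x_i - x'_i) = 0$. The only nontrivial check is on the $u_k$.

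For that, compute
\[
d^2(u_k) \;=\; d\!\left(\sum_{i=1}^n h_{k-1}(x_i, x'_i)\,\xi_i\right) \;=\; \sum_{i=1}^n h_{k-1}(x_i, x'_i)\,(x_i - x'_i),
\]
since $h_{k-1}(x_i, x'_i) \in B$ is a $d$-cocycle. The key elementary identity is
\[
(x_i - x'_i)\,h_{k-1}(x_i, x'_i) \;=\; \sum_{j=0}^{k-1}(x_i - x'_i)\,x_i^{\,j}(x'_i)^{k-1-j} \;=\; x_i^k - (x'_i)^k,
\]
a telescoping sum. Therefore
\[
d^2(u_k) \;=\; \sum_{i=1}^n \bigl(x_i^k - (x'_i)^k\bigr) \;=\; p_k(x_1,\ldots,x_n) - p_k(x'_1,\ldots,x'_n),
\]
where $p_k$ denotes the $k$-th power sum.

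The final step is to invoke the defining relations of $B = R \otimes_{R^{S_n}} R$: any symmetric polynomial in the $x$'s equals the same symmetric polynomial in the $x'$'s. Since $p_k$ is symmetric, $p_k(x) = p_k(x')$ in $B$, so $d^2(u_k) = 0$. The main point to be careful about is sign conventions for the graded Leibniz rule when propagating $d$ past the odd variables $\xi_i$; but since $d(\xi_i) = x_i - x'_i \in B$ sits in degree zero and the coefficients $h_{k-1}(x_i,x'_i)$ are even, no signs actually intervene in the computation above. This is really the only potential source of difficulty, and it is mild.
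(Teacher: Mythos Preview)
Your proof is correct and follows essentially the same approach as the paper: reduce to generators via the graded Leibniz rule, dispose of $x_i,x'_i,\xi_i$ trivially, and for $u_k$ use the telescoping identity $(x_i-x'_i)h_{k-1}(x_i,x'_i)=x_i^k-(x'_i)^k$ together with the fact that power sums are symmetric and hence agree on the two sets of variables in $B$. The paper additionally spells out why checking $d^2$ on generators suffices (namely that $d^2(bc)=d^2(b)c+bd^2(c)$ for an odd derivation), which you take for granted, but this is standard.
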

 
\begin{proof}
First note that thanks to the graded Leibniz rule, it is enough to show that $d^{2} = 0$ on generators. Indeed, if $b$ and $c$ are homogeneous then
\[
\begin{array}{rl}
d^{2}(bc) =&  d(d(b)c + (-1)^{|b|}bd(c)) = d^{2}(b)c + (-1)^{|d(b)|}d(b)d(c) + (-1)^{|b|}d(b)d(c) + bd^{2}(c)  \\ & =  d^2(b)c + bd^2(c)
\end{array}
\]
 where the last equality follows since $|b| = |d(b)| - 1$. Now, that $d^{2}(x_{i}) = d^{2}(x_{i}') = d^{2}(\xi_{i}) = 0$ is obvious from the definition. As for $u_{k}$ we have
\[
d^2(u_{k}) = \sum_{i = 1}^{n}h_{k-1}(x_{i}, x_{i}')(x_{i} - x_{i}') = \sum_{i = 1}^{n} x_{i}^{k} - (x_{i}')^{k} = 0.
\]
\end{proof}

\begin{theorem}[\cite{GHM}]
\label{th: A acts}
The algebra $\CA$ acts on the Rouquier complex $T_{\beta}$ for any $\beta$, such that $x_i$ act as usual, the action of $x'_i$ is twisted by the permutation $w$ corresponding to $\beta$, and $\xi_i$ act by the dot-sliding homotopies as above.
\end{theorem}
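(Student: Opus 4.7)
My plan is to produce a morphism of dg-algebras $\rho \colon \CA \to \mathrm{End}^\bullet(T_\beta)$. Since $\CA$ is freely generated as a graded algebra over $B$ by the odd classes $\xi_i$ and the even classes $u_k$, it is enough to specify $\rho$ on the generators $x_i, x'_i, \xi_i, u_k$ and to verify the defining relations of $B$, the super-commutativity relations $\xi_i\xi_j + \xi_j\xi_i = 0$ and $\xi_i^2 = 0$, and the compatibility $\rho \circ d_\CA = [d_{T_\beta},-] \circ \rho$ on generators.

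First I let $\rho(x_i)$ be the usual left action of $x_i$ on $T_\beta$ and $\rho(x'_i)$ be the right action of $x'_{w(i)}$. The defining relations of $B$ hold because, for any $S_n$-symmetric polynomial $f$, one has $f(x'_{w(1)},\ldots,x'_{w(n)}) = f(x'_1,\ldots,x'_n)$ by symmetry, and this already agrees with $f(x_1,\ldots,x_n)$ on every Soergel bimodule by the discussion preceding the theorem. Next, I take $\rho(\xi_i)$ to be the dot-sliding homotopy supplied by Theorem~\ref{th: homotopy}; this gives $[d_{T_\beta},\rho(\xi_i)] = x_i - x'_{w(i)} = \rho(d_\CA \xi_i)$, and the single-crossing homotopies of Lemma~\ref{lem: homotopy crossing} square to zero and anticommute, with both properties propagating through $\otimes_R$ via the graded Leibniz rule.

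The heart of the argument is the construction of $\rho(u_k)$ of homological degree two satisfying
\[
\bigl[d_{T_\beta},\rho(u_k)\bigr] = \sum_{i=1}^n h_{k-1}\bigl(x_i, x'_{w(i)}\bigr)\, \rho(\xi_i).
\]
The right-hand side is $d_{T_\beta}$-closed: a Leibniz computation gives
\[
\Bigl[d_{T_\beta}, \sum_i h_{k-1}(x_i, x'_{w(i)})\rho(\xi_i)\Bigr] = \sum_i h_{k-1}(x_i, x'_{w(i)})(x_i - x'_{w(i)}) = p_k(x) - p_k(x'),
\]
which vanishes on any Soergel bimodule since the power sum $p_k$ is symmetric and hence acts equally on both sides.

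I expect the main obstacle to be exhibiting $\rho(u_k)$, i.e.\ realising this closed class as a coboundary. My preferred strategy is obstruction-theoretic: because $\CA$ is a semi-free (hence cofibrant) dg-algebra resolution of $R$ over $B$, lifting the $B$-action on the dg $B$-module $T_\beta$ to an $\CA$-action is governed by successive obstructions in the cohomology of $\mathrm{End}^\bullet(T_\beta)$ lying in the bidegrees of the $\xi_i$ and the $u_k$. The $\xi_i$-level obstructions are killed by Theorem~\ref{th: homotopy}; one then verifies that the $u_k$-level obstructions vanish in the relevant bidegree. An alternative is to induct on the length of $\beta$, starting from the trivial action of $\CA$ on $T_1 = R$ via the augmentation $\CA \to R$ (with $\xi_i, u_k \mapsto 0$), and extending across each crossing by pasting the inductive $\rho$ on $T_\gamma$ together with an explicit primitive on the single-crossing complex $T_i^{\pm 1}$. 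The technical heart of either approach is the coherent choice of the $\rho(\xi_i)$ and $\rho(u_k)$ compatibly with $\otimes_R$.
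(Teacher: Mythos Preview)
Your setup is correct and you identify the right target: you need a degree-two endomorphism $\rho(u_k)$ with $[d_{T_\beta},\rho(u_k)] = \sum_i h_{k-1}(x_i,x'_{w(i)})\rho(\xi_i)$, and you correctly check that the right-hand side is closed. But neither of your two strategies actually closes the gap. The obstruction-theoretic route asserts that ``one then verifies that the $u_k$-level obstructions vanish in the relevant bidegree'' without any verification; there is no a priori vanishing of $H^1\bigl(\mathrm{End}^\bullet(T_\beta)\bigr)$ in the required internal degree, so the existence of a primitive is genuinely at stake. Your inductive route is closer to the mark, but ``pasting the inductive $\rho$ on $T_\gamma$ together with an explicit primitive on $T_i^{\pm 1}$'' presupposes exactly what is missing: a mechanism for combining an $\CA$-action on $M$ and one on $N$ into an $\CA$-action on $M\otimes_R N$. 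That requires a coproduct $\Delta\colon \CA \to \CA\otimes_R\CA$ compatible with the differentials, and such a coproduct is not automatic.

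This is precisely the content of the paper's proof. It first shows (Lemma~\ref{lem: A acts on a single crossing}) that $\CA$ acts on each $T_i^{\pm 1}$ with all $u_k$ acting by zero, and then provides (Lemma~\ref{lem: A coproduct}) an explicit chain map $\Delta$ with the nontrivial formula
\[
\Delta(u_k)=u_k\otimes 1+1\otimes u_k+\sum_{i=1}^{n}h_{k-2}(x_i,x'_i,x''_i)\,\xi_i\otimes\xi_i.
\]
The correction term $\xi_i\otimes\xi_i$ is the whole point: it is what makes $\Delta$ commute with $d_\CA$, and it is how nonzero $u_k$-actions are generated on longer braids even though $u_k$ acts by zero on each individual crossing (see Example~\ref{ex: A sigma2}). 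Once you have $\Delta$ as a map of dg algebras, the composite $\CA\xrightarrow{\Delta}\CA\otimes_R\CA\to\mathrm{End}(M)\otimes_R\mathrm{End}(N)\to\mathrm{End}(M\otimes_R N)$ is automatically a dg-algebra map, so all the super-commutativity relations among the $u_k$ and $\xi_i$ (which you did not address) come for free. Your induction becomes a proof once you supply this $\Delta$; without it, the argument does not go through.
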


Theorem \ref{th: A acts} follows from two lemmas below. Indeed, by Lemma \ref{lem: A acts on a single crossing} the dg algebra $\CA$ acts on Rouquier complexes $T_i^{\pm}$ (such that $u_i$ act by 0), and by Lemma \ref{lem: A coproduct} there is a coproduct on $\CA$ which allows to define the structure of $\CA$-module on the tensor product $M\otimes_{R} N$ of two $\CA$-modules. Note that in general $u_i$ act nontrivially on $T_{\beta}$, see Example \ref{ex: A sigma2}.

\begin{lemma}
\label{lem: A coproduct}
There is a chain map $\mathcal{A}\to \mathcal{A}\otimes_{R}\mathcal{A}$ defined by the equations
$$
\Delta(x_i)=x_i\otimes 1,\ \Delta(x'_i)=1\otimes x'_i,\ \Delta(\xi_i)=\xi_i\otimes 1+1\otimes \xi_i
$$
$$
\Delta(u_k)=u_k\otimes 1+1\otimes u_k+\sum_{i=1}^{n}h_{k-2}(x_i,x'_i,x''_i)\xi_i\otimes \xi_i
$$
\end{lemma}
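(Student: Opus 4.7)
The plan is to extend the given prescription on generators multiplicatively to a morphism of graded-commutative dg algebras $\Delta\colon \mathcal{A}\to \mathcal{A}\otimes_R\mathcal{A}$, then verify that it is well-defined and commutes with the differential. Throughout, I use the three-variable notation $(x_i, x'_i, x''_i)$ for $\mathcal{A}\otimes_R\mathcal{A}$, so that $\Delta(x_i)=x_i$ and $\Delta(x'_i)=x''_i$, and so that the balancing relation $a x'_i\otimes b = a\otimes x_i b$ identifies $x'_i\otimes 1$ with $1\otimes x_i$. The only relations in $\mathcal{A}$ beyond free graded-commutativity come from $B\subset\mathcal{A}$, namely $f(x)=f(x')$ for $f\in R^{S_n}$; well-definedness on $B$ follows because $\Delta(f(x))=f(x_1,\dots,x_n)\otimes 1$ and $\Delta(f(x'))=1\otimes f(x'_1,\dots,x'_n)$ both equal $f(x'_1,\dots,x'_n)\otimes 1$ after combining the $B$-relation inside each tensor factor with balancing. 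Compatibility with $\xi_i^2=0$ and the anticommutativity of the $\xi_i$ follows from the Koszul sign rule, and degree preservation is immediate.

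By the graded Leibniz rule it suffices to check $d\Delta = \Delta d$ on generators. The conditions on $x_i, x'_i$ are trivial, and for $\xi_i$ one computes
\[
\Delta(d\xi_i) \;=\; x_i - x''_i \;=\; (x_i - x'_i) + (x'_i - x''_i) \;=\; d(\xi_i\otimes 1) + d(1\otimes \xi_i) \;=\; d\Delta(\xi_i).
\]
The main case is $u_k$. Using the Koszul sign rule one has
\[
d(\xi_i\otimes \xi_i) \;=\; (x_i - x'_i)(1\otimes \xi_i) \;-\; (x'_i - x''_i)(\xi_i\otimes 1),
\]
so gathering terms in $d\Delta(u_k)$ by their $\xi_i\otimes 1$ and $1\otimes \xi_i$ components produces
\begin{align*}
[\xi_i\otimes 1]:&\quad h_{k-1}(x_i, x'_i) \;-\; h_{k-2}(x_i, x'_i, x''_i)(x'_i - x''_i),\\
[1\otimes \xi_i]:&\quad h_{k-1}(x'_i, x''_i) \;+\; h_{k-2}(x_i, x'_i, x''_i)(x_i - x'_i),
\end{align*}
both of which must equal $h_{k-1}(x_i, x''_i)$, the common coefficient in $\Delta(du_k) = \sum_i h_{k-1}(x_i, x''_i)(\xi_i\otimes 1 + 1\otimes \xi_i)$. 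This reduces the entire check to two instances (with variables permuted) of the single classical identity
\[
h_n(a, b) \;-\; h_n(a, c) \;=\; (b - c)\, h_{n-1}(a, b, c),
\]
which follows at once from $\sum_n h_n(a_1,\dots,a_m)t^n = \prod_j(1 - a_j t)^{-1}$ by telescoping.

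The main obstacle is pinpointing the correction term $\sum_i h_{k-2}(x_i, x'_i, x''_i)\,\xi_i\otimes \xi_i$ in $\Delta(u_k)$: the naive guess $u_k\otimes 1 + 1\otimes u_k$ fails because $\Delta(du_k)$ involves the three-variable $h_{k-1}(x_i, x''_i)$ while the naive $d\Delta(u_k)$ only produces the two-variable $h_{k-1}(x_i, x'_i)$ and $h_{k-1}(x'_i, x''_i)$, and the discrepancy is exactly $h_{k-2}(x_i, x'_i, x''_i)\cdot d(\xi_i\otimes \xi_i)$ by the $h$-identity. A secondary point requiring care is the Koszul sign in $d(\xi_i\otimes\xi_i)$, which produces the minus sign on the $(x'_i - x''_i)(\xi_i\otimes 1)$ contribution; this sign is essential so that the two instances of the $h$-identity contribute with matching rather than cancelling signs.
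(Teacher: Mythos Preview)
Your proof is correct and follows essentially the same approach as the paper: both reduce the check to generators via the graded Leibniz rule, verify the $\xi_i$ case by the telescoping $x_i-x''_i=(x_i-x'_i)+(x'_i-x''_i)$, and for $u_k$ compute $d(\xi_i\otimes\xi_i)$ with the Koszul sign and then invoke the identity $h_{k-1}(a,b)+(c-b)h_{k-2}(a,b,c)=h_{k-1}(a,c)$. You add two small refinements the paper omits: an explicit check that $\Delta$ is well-defined on the relations in $B$, and a generating-function justification of the $h$-identity rather than merely asserting it.
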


\begin{proof}
We need to check that $\Delta$ commutes with the differentials. This is clear for $x_{i}$. Let us check for $\xi_{i}$:
$$
d(\Delta(\xi_{i})) = d(\xi_i\otimes 1 + 1 \otimes \xi_i) = (x_{i} - x_{i}') \otimes 1 + 1 \otimes (x_i - x_{i}')
$$
\noindent but, just as before, $x_{i}' \otimes 1 = 1 \otimes x_{i}$, and $x_{i}''$ is just a shorthand for $1 \otimes x_{i}'$. So we see that
$$
d(\Delta(\xi_{i})) = x_{i} - x_{i}'' = \Delta(d(\xi_{i})).
$$

Finally, let us compute $d(\Delta(u_{k}))$:
\[\begin{array}{rl}
d(\Delta(u_{k})) & = d(u_k \otimes 1 + 1 \otimes u_k + \sum_{i = 1}^{n}h_{k-2}(x_i, x_i', x_i'')\xi_{i} \otimes \xi_{i}) \\
& = \sum_{i=1}^{n}[h_{k-1}(x_i, x_i')\xi_i \otimes 1 +
 1 \otimes h_{k-1}(x_i, x_i')\xi_i +\\ & h_{k-2}(x_i, x'_i, x''_i)((x_i - x_i')\otimes \xi_{i} - \xi_{i}\otimes(x_i - x'_i))]
\end{array}
\]

\noindent where the last negative sign is due to the graded Leibniz rule. Recalling that $x_i' = x_i' \otimes 1 = 1 \otimes x_i$ and $x''_i = 1 \otimes x'_i$ we can rewrite as 
\[
\begin{array}{l}
\sum_{i = 1}^{n}[h_{k-1}(x_i, x_i')\xi_i \otimes 1 + h_{k-1}(x_i', x_i'')1 \otimes \xi_i +\\ h_{k-2}(x_i, x_i', x_i'')((x_i - x_i')(1 \otimes \xi_i) - (x_i' - x_i'')(\xi_i \otimes 1))] = \\
\sum_{i = 1}^{n}[(h_{k-1}(x_i, x'_i)+h_{k-2}(x_i, x_i', x_i'')(x_i''-x'_i))(\xi_i\otimes 1) +\\ (h_{k-1}(x'_i, x''_i)+h_{k-2}(x_i, x_i', x_i'')(x_i - x_i'))(1 \otimes \xi_i)]
\end{array}
\]

Now we observe that $h_{k-1}(x_i, x_i') + h_{k-2}(x_i, x'_i, x''_i)(x''_i - x'_i) = h_{k-1}(x_i, x''_i)$ and, similarly, $h_{k-1}(x'_i, x''_i)+ h_{k-2}(x_1, x'_i, x''_i)(x_i - x'_i) = h_{k-1}(x_i, x''_i)$.  Then,
$$
d(\Delta(u_k)) = \sum_{i=1}^{n}h_{k-1}(x_i, x''_i)(\xi_i \otimes 1 + 1 \otimes \xi_i) = \Delta(d(u_k)).
$$
\end{proof}

\begin{lemma}
\label{lem: A acts on a single crossing}
The dg algebra $\mathcal{A}$ acts on the Rouquier complex $T_i$.
\end{lemma}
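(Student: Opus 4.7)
The plan is to define the action of every generator of $\mathcal{A}$ on the two-term complex $T_i = [B_i(-1) \xrightarrow{m} \underline{R}]$ by hand and then check each relation in the dg algebra as an operator identity on $T_i$. A crucial simplification is that $T_i$ is concentrated in only two consecutive homological degrees, so any element of $\mathcal{A}$ of homological degree $\geq 2$ must act as zero; in particular every $u_k$ is forced to act trivially, and any composition $\xi_j \xi_k$ is automatically zero on $T_i$ (so the graded anticommutation $\xi_j\xi_k + \xi_k\xi_j = 0$ and $\xi_j^2 = 0$ are vacuous).

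For the polynomial part, I would let $x_j$ act by standard left multiplication and let $x'_j$ act via $x'_{s_i(j)}$ in the bimodule structure: so $x'_j$ acts as $x'_j$ for $j \neq i, i+1$, while $x'_i$ acts as the bimodule $x'_{i+1}$ and $x'_{i+1}$ as the bimodule $x'_i$. Because left and right $R$-actions on a bimodule commute, this is a well-defined action of the commutative subalgebra generated by $x_j, x'_j$, and it descends through the quotient defining $B \subseteq \mathcal{A}$ since the relations of $B_i$ force every symmetric polynomial in $(x_i, x_{i+1})$ to equal the corresponding one in $(x'_i, x'_{i+1})$. For the degree-one generators I would set $\xi_j = 0$ for $j \neq i, i+1$ (forced, since then $x_j - x'_{s_i(j)} = 0$ already), define $\xi_i : R \to B_i$ by $\xi_i(1) = x_i - x'_{i+1}$, and define $\xi_{i+1} : R \to B_i$ by $\xi_{i+1}(1) = x_{i+1} - x'_i$. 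Both are well-defined bimodule maps by a direct copy of the argument in Lemma \ref{lem: problem 2} (using the two defining relations of $B_i$).

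It remains to verify the two differential-compatibility relations. The relation $d(\xi_j) = x_j - x'_j$, which under the twisting becomes the operator $x_j - x'_{s_i(j)}$, is exactly Lemma \ref{lem: homotopy crossing} for $j = i$, a symmetric variant of it for $j = i+1$ (using $x_i + x_{i+1} = x'_i + x'_{i+1}$ in $B_i$), and is vacuous otherwise. For the relation
\[
d(u_k) \;=\; \sum_{j=1}^n h_{k-1}(x_j, x'_j)\,\xi_j,
\]
since $u_k$ acts as zero I must show the right-hand operator vanishes on $T_i$. Only the $j = i, i+1$ terms survive, and evaluating on $1 \in R$ gives, by the identity $h_{k-1}(a,b)(a-b) = a^k - b^k$,
\[
\bigl(x_i^k - (x'_{i+1})^k\bigr) + \bigl(x_{i+1}^k - (x'_i)^k\bigr) = \bigl(x_i^k + x_{i+1}^k\bigr) - \bigl((x'_i)^k + (x'_{i+1})^k\bigr),
\]
which vanishes in $B_i$ because the defining relations together with Newton's identities force all power sums in $(x_i, x_{i+1})$ to equal those in $(x'_i, x'_{i+1})$.

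The main hurdle is just bookkeeping: getting the twisting convention consistent with the bimodule structure and keeping graded signs in the Leibniz rule straight. Once that is pinned down, every verification reduces to Lemma \ref{lem: problem 2}, Lemma \ref{lem: homotopy crossing}, or the Newton-identity computation above, so nothing substantively new is needed.
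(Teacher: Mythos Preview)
Your proposal is correct and follows essentially the same route as the paper: define the twisted polynomial action, set $u_k=0$ by degree reasons, let $\xi_i$ be the dot-sliding homotopy and $\xi_{i+1}$ its negative (your formula $\xi_{i+1}(1)=x_{i+1}-x'_i$ equals $-(x_i-x'_{i+1})$ in $B_i$ via $x_i+x_{i+1}=x'_i+x'_{i+1}$), and reduce the nontrivial check to the vanishing of $\sum_j h_{k-1}(x_j,x'_j)\xi_j$. The only cosmetic difference is the final step: the paper factors out $\xi_i$ and observes $h_{k-1}(x_i,x_{i+1})-h_{k-1}(x_{i+1},x_i)=0$ on $R$ by symmetry of $h_{k-1}$, whereas you telescope each term to a power sum and invoke Newton's identities in $B_i$; both arguments are equivalent one-liners.
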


\begin{proof}
By degree considerations, all $u_{k}$ have to act by $0$ on $T_{i}$. The action of $x_{i}$ is given by the left action of $R$ on $T_{i}$, while the action of $x'_{i}$ is given by the right action of $R$ \emph{twisted by the automorphism $s_{i}: R \to R$}, so that $x'_i$ is given by right multiplication by $x_{i+1}$, for example. Finally, the action of $\xi_{i}$ is given by the homotopy $h$ constructed in Lemma \ref{lem: homotopy crossing}, the action of $\xi_{i+1}$ is the negative of this homotopy, and $\xi_{k}$ acts by $0$ for $k \neq i, i+1$. Let us verify that this indeed gives an action of $\mathcal{A}$ on $T_{i}$.

Since all the maps appearing in the Rouquier complex are maps of $R$-bimodules, they commute with both the left and right action of $R$, as needed. Any symmetric polynomial is, in particular, symmetric in the $i$ and $i+1$-st variables, so the left and right actions of symmetric polynomials on $T_{i}$ coincide. This verifies that we have an action of $B = R \otimes_{R^{S_{n}}}R$ with trivial differential. 

By definition of the action and Lemma \ref{lem: homotopy crossing}, we have $d\xi_{i} = x_{i} - x'_{i}$, and $d\xi_{i+1} = x_{i+1} - x'_{i+1}$. The twist $s_{i}$ fixes every other variable, and the left and right actions of every other variable on $T_{i}$ and on $R$ coincide. Thus, for $k \neq i, i+1$ we get $d\xi_{k} = 0 = x_{k} - x'_{k}$, as needed. 

Finally, we need to check that $\sum_{\ell = 1}^{n}h_{k-1}(x_\ell, x'_\ell)\xi_{\ell}$ acts by $0$. Since $\xi_{\ell}$ acts by $0$ unless $\ell = i, i+1$, we simply get $h_{k-1}(x_{i}, x_{i}')\xi_{i} + h_{k-1}(x_{i+1}, x'_{i+1})\xi_{i+1}$. But the action is defined so that $\xi_{i+1} = -\xi_{i}$, and this further simplifies as $(h_{k-1}(x_i, x_{i'}) - h_{k-1}(x_{i+1}, x'_{i+1}))\xi_{i+1}$. On $B_{i}$ this is zero because $\xi_{i+1}$ is already zero there. On $R$, we get:
$$
h_{k-1}(x_i, x'_{i}) - h_{k-1}(x_{i+1}, x'_{i+1}) = h_{k-1}(x_i, x_{i+1}) - h_{k-1}(x_{i+1}, x_{i}) = 0.
$$
\end{proof}

\begin{example}\label{ex: A sigma2}
Let us construct the action of the dg algebra $\mathcal{A}$ on the Rouquier complex for $\sigma^2$. 

Thanks to Lemma \ref{lem: A acts on a single crossing}, we get an action of $\cA \otimes \cA$ on $T \otimes T = \sigma^2$. Composing with the coproduct map of Lemma \ref{lem: A coproduct} , this induces an action of $\cA$ on $T \otimes T = \sigma^2$. Let us be more explicit. The action of $u_1$ has to be $0$, because $u_1$ acts by zero on $T$ and $\Delta(u_1) = u_1 \otimes 1 + 1 \otimes u_1$. The action of $x_i$ (resp. $x'_i$) is given by left (resp. right) multiplication by $x_i$. The action of $\xi_1$ is the homotopy of Example \ref{ex: sigma2} and the action of $\xi_2$ the negative of this. Finally, by degree considerations $u_2$ can only be nonzero on $R$, and there it is the map $R \to B$ constructed in Lemma \ref{lem: problem 2}.

\begin{center}
    \begin{tikzcd}
    B \ar[rr, "x_1 - x'_1"] & & B \ar[rr] \ar[ll, bend left, "\xi_1"] & & R \ar[llll, "u_2", bend right] 
    \end{tikzcd}
\end{center}

That $dx_1 = dx_2 = dx'_1 = dx'_2 = 0$ follows because all the maps involved in the complex $B \to B \to R$ are bimodule homomorphisms. That $d\xi_{1} = x_1 - x'_1$ and $d\xi_2 = -d\xi_1 = x_2 - x'_2$ is essentially Example \ref{ex: sigma2} above. So we only need to check the relation $du_2 = (x_1 + x'_1)\xi_1 + (x_2 + x'_2)\xi_2$. Note that the right-hand side is zero, while the left-hand side is the composition of the maps $R \longrightarrow B \buildrel {x_1 - x'_1} \over \longrightarrow B$. To check that it is zero, it suffices to check that it is zero when evaluated at $1$. Here, we have:
$$
\begin{array}{rl}
(x_1 - x'_1)(x_1 \otimes_{R^{s}} 1 - 1 \otimes_{R^{s}} x_2) = & x_1^2\otimes_{R^{s}} 1 - x_1 \otimes_{R^{s}} x_2 - x_1 \otimes_{R^{s}} x_1 + 1 \otimes_{R^{s}} x_1x_2 \\ = & (x_1^2 + x_1x_2)\otimes_{R^{s}}1 -x_1\otimes_{R^{s}}(x_1 + x_2) \\ = & (x_1^2 + x_1x_2 -x_1(x_1+x_2))\otimes_{R^{s}} 1 = 0.
\end{array}
$$
\end{example}

\begin{remark}
 Note that, similarly to Example \ref{ex: A sigma2}, $u_1$ acts by zero on any Rouquier complex $T_{\beta}$.
\end{remark}
To sum up Theorem \ref{th: A acts}, there is an action of interesting operators $\xi_i,u_k$ on Rouquier complexes $T_{\beta}$ but these do not commute with the differential on $T_{\beta}$. To resolve this issue, we use the $y$-ified complex with the differential
$$
D=d+\sum_{i=1}^{n}\xi_{i}y_i
$$
and define 
$$
F_k=u_k+\sum_{i=1}^{n}h_{k-1}(x_i,x'_i)\frac{\partial}{\partial y_i}.
$$
Then we get the following:

\begin{theorem}[\cite{GHM}]
\label{th: tautological}
(a) We have
$$
[D,F_k]=0,\ [F_k,F_l]=0,
$$
$$
[F_k,x_i]=0,\ [F_k,y_i]=h_{k-1}(x_i,x'_i).
$$
In particular, $F_k$ define a family of commuting operators on $\HY(\beta)$.

(b) The operator $F_2$ satisfies a ``hard Lefschetz" condition and lifts to an action of $\mathfrak{sl}_2$ on $\HY(\beta)$. As a corollary, $\HY(\beta)$ is symmetric.
\end{theorem}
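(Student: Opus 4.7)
My plan for part (a) is a direct computation from the definitions, using the structure constants of the DGA $\mathcal{A}$ and the canonical relation $[\partial/\partial y_i, y_j] = \delta_{ij}$. Writing $D = d + \sum_i \xi_i y_i$ and $F_k = u_k + \sum_i h_{k-1}(x_i, x'_i)\partial/\partial y_i$, I expand $[D, F_k]$ into four commutators. The crucial cancellation comes from $[d, u_k] = d(u_k) = \sum_i h_{k-1}(x_i, x'_i)\xi_i$ paired against $[\sum_i \xi_i y_i, \sum_j h_{k-1}(x_j, x'_j)\partial/\partial y_j] = -\sum_i h_{k-1}(x_i, x'_i)\xi_i$; the other two cross terms vanish because $d$ commutes with $x_j, x'_j, \partial/\partial y_j$ and because $u_k$ commutes with $\xi_i, y_i$ inside $\mathcal{A}$. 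The remaining identities $[F_k, x_i] = 0$ and $[F_k, y_i] = h_{k-1}(x_i, x'_i)$ follow immediately, and $[F_k, F_l] = 0$ because the $u$'s and the $\partial/\partial y$'s each commute among themselves while crossing freely through polynomial coefficients. Since $[D, F_k] = 0$, each $F_k$ descends to an operator on $\HY(\beta)$, and the induced operators pairwise commute.

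For part (b), the plan is to promote $F_2$ to an honest $\mathfrak{sl}_2$-triple $(E, H, F_2)$ on the $y$-ified complex (or on its homology), and then invoke representation theory of $\mathfrak{sl}_2$ to derive hard Lefschetz and the $q \leftrightarrow t$ symmetry as a consequence. The weight operator $H$ should be the natural grading operator measuring the balanced bidegree on $\HY(\beta)$, which is manifestly diagonalisable with finite-dimensional weight spaces. The candidate lowering operator $E$ is to be built as a symmetric analog of $F_2$, involving multiplication by $y$-variables paired with a ``dual'' of $u_2$, so that $[E, F_2] = H$, $[H, F_2] = -2F_2$, $[H, E] = 2E$ hold at the chain level (or, failing that, after passing to cohomology using $[D, F_2] = 0$ from part (a)). Constructing $E$ and verifying these relations is where most of the bookkeeping lies.

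The main obstacle is the hard Lefschetz conclusion itself, namely that $F_2^k : \HY(\beta)_{-k} \xrightarrow{\sim} \HY(\beta)_{+k}$ for every positive $k$ and every weight. Once $(E, H, F_2)$ forms an $\mathfrak{sl}_2$-triple commuting with $D$, hard Lefschetz is equivalent to semisimplicity of the resulting representation. I would first establish this in the anchor case of full twist powers using the explicit identification $\HY(T(n, kn)) \cong \mathcal{J}^k$ from Theorem \ref{th: full twist}: there $\mathcal{J}^k$ can be matched with a module on the isospectral Hilbert scheme associated with $\Hilb^n(\C^2)$, on which the sought $\mathfrak{sl}_2$-action is implemented by a genuine Lefschetz action of a Kähler class. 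One then bootstraps to arbitrary $\beta$ using the naturality of the $\mathcal{A}$-action (Theorem \ref{th: A acts}), its compatibility with tensor products via the coproduct of Lemma \ref{lem: A coproduct}, and invariance under Markov moves. The $q \leftrightarrow t$ symmetry of $\HY(\beta)$ finally falls out by applying the Weyl element $\exp(E)\exp(-F_2)\exp(E) \in SL_2$, which interchanges the two grading directions carried by the $x_i$'s and the $y_i$'s.
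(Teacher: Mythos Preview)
The paper does not prove Theorem~\ref{th: tautological}; it is cited from \cite{GHM}, and the preceding material (Theorem~\ref{th: A acts}, Lemmas~\ref{lem: A coproduct}--\ref{lem: A acts on a single crossing}) only sets up the $\mathcal{A}$-action and the definitions of $D$ and $F_k$. So there is no in-paper argument to compare against; what follows is an assessment of your proposal on its own terms.

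Your argument for part~(a) is correct and is the natural computation. Since the $\mathcal{A}$-action on $T_\beta$ is a map of dg algebras, $[d,u_k]$ acts as $d(u_k)=\sum_i h_{k-1}(x_i,x'_i)\xi_i$, and this is cancelled by the Weyl-algebra bracket $\bigl[\sum_i\xi_i y_i,\ \sum_j h_{k-1}(x_j,x'_j)\,\partial/\partial y_j\bigr]=-\sum_i h_{k-1}(x_i,x'_i)\xi_i$, exactly as you say. The remaining three identities fall out immediately, and the vanishing of the two cross terms is for the reasons you give (supercommutativity of $\mathcal{A}$ for $[\xi_i,u_k]=0$, and $d$ being $\C[y]$-linear for the other).

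Part~(b), however, has genuine gaps. First, you give no formula for $E$: ``a symmetric analog of $F_2$ involving multiplication by $y$-variables paired with a dual of $u_2$'' is a wish, not a construction, and no ``dual of $u_2$'' is available in $\mathcal{A}$ as defined. Second, your route to hard Lefschetz via full twists is confused: identifying $\mathcal{J}^k$ with sections over the isospectral Hilbert scheme and invoking ``a genuine Lefschetz action of a K\"ahler class'' conflates two different $\mathfrak{sl}_2$'s. The one in the theorem is the infinitesimal form of the $SL(2)$ acting linearly on the $(x,y)$-plane (cf.\ the discussion following Conjecture~\ref{conj: yified GNR}), which exchanges the $q$ and $t$ gradings; it is not a Lefschetz $\mathfrak{sl}_2$ coming from cup product with a cohomology class on some auxiliary variety. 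In any case the Hilbert-scheme interpretation is still conjectural in this paper. Third, and most seriously, there is no mechanism by which naturality of the $\mathcal{A}$-action, tensor compatibility, or Markov invariance transports a hard Lefschetz statement from full-twist powers to an arbitrary braid: hard Lefschetz is not a property stable under cones or tensor products of complexes. An actual proof must produce explicit chain-level operators $E,H$ commuting with $D$ and then establish the Lefschetz isomorphisms directly for every $\beta$; your sketch does neither.
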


For knots, we have $\HY(\beta)=\HHH(\beta)\otimes \C[x,y]$, so $\HHH(\beta)$ is symmetric as well. This was conjectured by Dunfield, Gukov and Rasmussen in \cite{DGR}, see Example \ref{ex: T34 homology} for the visually symmetric representation of $\HHH(T(3,4))$, and Examples \ref{ex: two strand homology} and \ref{ex: t22} for $\HHH(T(2,n))$.

Note that the ``curious hard Lefschetz" property in the homology of positroid varieties was established in \cite{GL3,LS}.

\subsection{Geometric analogue}

The construction of the operators $u_k$ and $F_k$ via the coproduct on $\CA$ is similar to (and motivated by) the construction of the tautological classes on character varieties \cite{Boalch,BSS,Jeffrey,Mellit} which we briefly recall in this section. 
Let $G=GL(n)$ and let $Q$ be a symmetric function in $n$ variables of degree $r$. Then one can use the Bott-Shulman-Stasheff construction \cite{Jeffrey} to obtain the following differential forms and cohomology classes:
$$
\Phi_0(Q)\in  H^{2r}(BG),\ \Phi_1(Q)\in \Omega^{2r-1}(G),\  \Phi_2(Q)\in \Omega^{2r-2}(G\times G)\ldots
$$
such that $d\Phi_1(Q)=0$ (so that $\Phi_1(Q)$ defines a cohomology class on $G$) and
$$
d\Phi_2(Q)=\pi_1^*\Phi_1(Q)+\pi_2^*\Phi_1(Q)-m^*\Phi_1(Q)
$$
where $m:G\times G\to G$ is the multiplication map. 

One can think of these classes as follows: first, recall that the cohomology of $BG$ is isomorphic to the $G$-equivariant cohomology of a point, and to $\C[\mathfrak{g}]^{G}\simeq \C[x_1,\ldots,x_n]^{S_n}$, so a symmetric function $Q$ naturally defines a cohomology class $\Phi_0(Q)$. In particular, the cohomology of $BG=BGL(n)$ is a free polynomial algebra generated in degrees $2,4,\ldots,2n$.

Next, we consider the Leray spectral sequence 
$$
H^*(G)\otimes H^*(BG)\Rightarrow H^*(EG)=\C
$$
associated to the universal fibration $EG\to G$. The class $[\Phi_1(Q)]\in H^{2r-1}(G)$ is characterized by the fact that it kills $\Phi_0(Q)\in H^{2r}(BG)$ in this spectral sequence. We refer to \cite{Jeffrey} for an explicit construction of the differential form $\Phi_1(Q)$ representing this cohomology class.
In particular, the cohomology of $GL(n)$ is a free polynomial algebra generated by anticommuting variables in degrees $1,3,\ldots,2n-1$.

The form $\Phi_2(Q)$ corresponds to the equation
\begin{equation}
\label{eq: coproduct G}
m^*[\Phi_1(Q)]=1\otimes [\Phi_1(Q)]+[\Phi_1(Q)]\otimes 1.
\end{equation}
Although this equation holds in cohomology (where $m^*:H^*(G)\to H^*(G)\otimes H^*(G)$), it does not hold for the actual differential forms $\Phi_1(Q)$ on the nose. The difference between the left and right hand sides in \eqref{eq: coproduct G} can be then written as 
$d\Phi_2(Q)$ and it turns out that such $\Phi_2(Q)$ can be written explicitly. We refer to \cite{BSS,Jeffrey,Mellit} for more details and explicit formulas for $\Phi_1(Q)$ and $\Phi_2(Q)$.

\begin{example}\label{ex: square sum}
If $Q=\sum x_i^2$ then 
$$
\Phi_2(Q)=\Tr(f^{-1}df\wedge dg g^{-1}).
$$
\end{example}

These forms can be used for ``gluing'' various $G$-valued functions. Given $f:X\to G$ and $g:Y\to G$ for some $X$ and $Y$, assume that 
$$
f^*\Phi_1(Q)=d\omega_X,\ g^*\Phi_1(Q)=d\omega_Y,
$$
then we can define a new form
\begin{equation}
\label{eq: glue forms}
\omega_{X\times Y}=\omega_X+\omega_Y+(f\times g)^*\Phi_2(Q)\in \Omega^{2r-2}(X\times Y).
\end{equation}
It follows from the above that 
$$
d\omega_{X\times Y}=(f\cdot g)^*\Phi_1(Q).
$$
Equation \eqref{eq: glue forms} is similar to the construction of the coproduct on $\CA$. Mellit \cite{Mellit2} used this construction with $Q = \sum x_i^2$, cf. Example \ref{ex: square sum}, to define a  2-form on an arbitrary braid variety $X(\beta)$. Note that the $2$-form on $X(\beta)$ is closed, since $\Phi_2(Q)$ vanishes on upper-triangular matrices, but we cannot expect it to be symplectic in general, since $\dim(X(\beta))$ is not necessarily even. However, Mellit takes a closed subvariety $\widetilde{Y}(\beta) \subseteq X(\beta)$ by fixing the diagonal of the matrix $B_{\beta}$, and takes the quotient $Y(\beta) := \widetilde{Y}(\beta)/\widetilde{T}$ by the action of an appropriate torus $\widetilde{T}$, see \cite[Definition 5.3.7]{Mellit2} for details.

\begin{theorem}[\cite{Mellit2}]
\label{th: sl2 braid variety}
The above construction produces a symplectic form on $Y(\beta)$ which satisfies ``curious hard Lefschetz" with respect to the weight filtration in cohomology. 
\end{theorem}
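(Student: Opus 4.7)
My plan is to first make the gluing formula \eqref{eq: glue forms} completely explicit on $X(\beta)$, and then attack non-degeneracy and curious hard Lefschetz as separate tasks. Applying \eqref{eq: glue forms} iteratively to $B_\beta(z_1,\ldots,z_r) = B_{i_1}(z_1)\cdots B_{i_r}(z_r)$ with $Q=\sum x_i^2$ (so $\Phi_2(Q) = \Tr(f^{-1}df \wedge dg\, g^{-1})$ from Example \ref{ex: square sum}) produces an explicit 2-form
\[
\omega = \sum_{j<k} c_{jk}(z)\, dz_j \wedge dz_k
\]
on $X(\beta)$: each single-crossing factor $B_{i_j}(z_j)$ depends on one parameter, so $f_j^*\Phi_1(Q)$ is automatically exact, and closedness of $\omega$ on $X(\beta)$ follows from the vanishing of $\Phi_2(Q)$ on upper-triangular matrices (as stated just before the theorem). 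The coefficients $c_{jk}$ are traces built from the $B_{i_\ell}$ and their derivatives, and I would unpack them at this stage using Example \ref{ex: square sum}.

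For non-degeneracy on $Y(\beta) = \widetilde{Y}(\beta)/\widetilde{T}$, I would first verify it on short braids such as $\sigma^k$ (Examples \ref{ex: xsigma3}--\ref{ex: xsigma5}) by direct computation, and then induct on $\ell(\beta)$: appending a crossing adds a single summand to \eqref{eq: glue forms}, and one needs to control how this modifies the Pfaffian of the coefficient matrix after fixing the diagonal and passing to the torus quotient. The invariance properties of Theorem \ref{th: braid variety}(c) let me reduce to a convenient braid word for $\beta$. A cleaner conceptual route, which I would pursue in parallel, is to identify $Y(\beta)$ with an open piece of a character variety of a punctured disk via \cite{Mellit2} and match $\omega$ with the quasi-Hamiltonian / Atiyah--Bott form, whose non-degeneracy is known a priori; the compactification in Theorem \ref{th: braid variety}(d) would then serve to read off the moduli-theoretic picture.

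For curious hard Lefschetz, I would exploit the smooth compactification of Theorem \ref{th: braid variety}(d) whose boundary is a normal-crossings divisor stratified by subwords of $\gamma$. The weight spectral sequence then expresses $\gr_W H^*(Y(\beta))$ in terms of cohomology of the strata (essentially products of tori and closed Bott--Samelson pieces), and cup product with $[\omega]$ (which shifts weight by $2$ and cohomological degree by $2$) becomes a map between explicit graded pieces indexed by the subword complex. Establishing that iterated cup product is an isomorphism on the appropriate graded pieces should reduce to a combinatorial statement about the subword complex, analogous to the curious hard Lefschetz results for positroid varieties in \cite{GL3, LS}, which are themselves braid varieties by Lemma \ref{ex: positroid}. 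One may hope that an explicit comparison recovers the Lefschetz property for all $X(\beta)$ from the positroid case.

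The main obstacle, I expect, is non-degeneracy. The 2-form arises from the abstract Bott--Shulman--Stasheff construction, its coefficients are opaque trace expressions, and there is no visible reason for the resulting skew matrix to be invertible on $Y(\beta)$. The inductive argument is delicate because appending a crossing can either enlarge $X(\beta)$ by an affine factor or cut by an equation, interacting non-trivially with the torus quotient $\widetilde{Y}(\beta)/\widetilde{T}$; so identifying $Y(\beta)$ with a character-variety stratum and transporting the known symplectic form is probably the cleanest path, but it requires importing non-trivial machinery from \cite{Mellit2}.
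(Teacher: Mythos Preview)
The paper does not actually prove this theorem: it is stated with attribution to \cite{Mellit2} and no argument is given in these notes. So there is no ``paper's own proof'' to compare against here; the result is imported wholesale from Mellit's paper on cell decompositions of character varieties.

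That said, let me comment on your plan relative to what Mellit actually does. Your instinct that the cleanest route to non-degeneracy is to identify $Y(\beta)$ with a piece of a (wild) character variety and transport the known symplectic structure is correct, and is essentially Mellit's approach: the form is the quasi-Hamiltonian reduction of the Alekseev--Malkin--Meinrenken form, and non-degeneracy is inherited rather than checked by hand. Your proposed direct induction on $\ell(\beta)$ via Pfaffians of trace matrices is unlikely to close; as you yourself note, the interaction with the torus quotient and diagonal-fixing is delicate, and there is no evident mechanism forcing invertibility.

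Your approach to curious hard Lefschetz, however, diverges from Mellit's and has a genuine gap. Mellit does \emph{not} use the brick-variety compactification and its weight spectral sequence. Instead, he constructs an explicit cell decomposition of $Y(\beta)$ (this is the main technical content of \cite{Mellit2}) in which each cell is of the form $(\C^*)^a \times \C^b$, and shows that the symplectic form restricts to a constant-coefficient form on each cell. Curious hard Lefschetz then follows from a formal argument: the cohomology decomposes according to the cells, each cell contributes a piece on which cup product with $[\omega]$ is manifestly an isomorphism (since $\omega$ is standard on $(\C^*)^a$), and the weight filtration is exactly the filtration by $b$. Your proposed reduction to a ``combinatorial statement about the subword complex'' is not worked out, and it is unclear how the class $[\omega]$ would interact with the strata of the brick compactification, which are closed Bott--Samelson varieties rather than tori. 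The positroid comparison you suggest only covers a special class of braids and does not obviously bootstrap to the general case.
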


\section{Algebraic links and affine Springer theory}

\subsection{Algebraic links}

Let $f(x,y)$ be a polynomial in two variables. Consider the plane curve  $C=\{f(x,y)=0\}\subset \C^2$. We assume that $C$ passes through the origin and has a singular point there. 

Consider the intersection $L=C\cap S^3_{\varepsilon}$ of $C$ with a small three-sphere with center at the origin and radius $\varepsilon$. It is a classical result of Milnor \cite{Milnor} that for $\varepsilon$ small enough the intersection is transverse (so that $L$ is a smooth link in $S^3$) and the topological type of $L$ does not depend on $\varepsilon$. Such links are called {\bf algebraic links}.

\begin{example}
The node $\{xy=0\}$ corresponds to the Hopf link (that is, the $(2,2)$ torus link). The cusp $\{x^2=y^3\}$ corresponds to the trefoil knot, that is, $T(2,3)$. More generally, the singularity $\{x^m=y^n\}$ corresponds to a positive $(m,n)$ torus link which has $d=GCD(m,n)$ components. 
\end{example}

It is known that (local) irreducible components of $C$ at the origin correspond to the connected components of the link $L$. In particular, if $C$ is irreducible (and reduced) then $L$ is an algebraic knot with one component. Such knots are classified in \cite{EN} and are all iterated cables of torus knots. The cabling parameters correspond to the Puiseux expansion of $C$. For links with more components, the classification is more complicated, and we refer to \cite{EN} for all details. 

\subsection{Oblomkov-Rasmussen-Shende conjectures}

Oblomkov, Rasmussen and Shende proposed a remarkable conjecture relating the singular curves to Khovanov-Rozansky homology. Recall that the Hilbert scheme of $n$ points on $C$ consists of ideals $I\subset \CO_C$ such that $\dim \CO_{C}/I=n$. We will also consider the local version of the Hilbert scheme where the ideals are contained in $\CO_{C,0}=\frac{\C[[x,y]]}{f(x,y)}$.

\begin{conjecture}[\cite{ORS}]
\label{conj: ORS}
One has
$$
\HHH^0(L)=\bigoplus_{k=0}^{\infty}H^*(\Hilb^k(C,0)).
$$
Here the right hand side is bigraded by the number of points $k$ and the homological degree, and the two gradings are related to the gradings on the left hand side by an explicit change of variables. 
\end{conjecture}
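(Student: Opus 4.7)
Since this is an open conjecture (proved only in special cases such as torus knots), my proposal is necessarily a plan for a program rather than a short proof. I would attack it by building a bridge through affine Springer fibers, following the philosophy that both sides of the conjecture should be identified with (weight-filtered) cohomology of a common geometric object.

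The first step is a reduction to the local setting. For an algebraic link $L$ coming from a plane curve germ $(C,0)$, I would pass to the completed local ring $\CO_{C,0}=\C[[x,y]]/(f)$ and use the parametrization of $C$ by branches to write down an element $\gamma$ of a loop algebra (a matrix $A(z)$ over $\C((z))$) whose characteristic polynomial cuts out $C$. By results alluded to in the paper, the punctual Hilbert schemes $\Hilb^{k}(C,0)$ are homeomorphic to pieces of the affine Springer fiber for $\gamma$, and the total cohomology assembles into a module for the affine Weyl group by Maulik--Yun / Migliorini--Shende type support theorems. The goal here is to reorganize the right-hand side into something that looks like the $A=0$ Hochschild degree of Rouquier's complex.

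The second step is to connect the braid side to the same affine Springer picture. For algebraic links, $L$ is the closure of a positive braid $\beta$, so by Theorem \ref{th: braid variety homology} one has $\HHH^{n}(\beta) \cong \gr_{W}H^{T}_{*,BM}(X(\beta))$, and by the symmetries described in the paper ($\HHH^{0}(\beta\Delta^{-2})=\HHH^{n}(\beta)$) one can hope to identify $\HHH^{0}(L)$ with equivariant Borel--Moore homology of a closely related space. I would then try to establish a comparison theorem, in the spirit of the non-abelian Hodge correspondence mentioned in the introduction, between the braid variety $X(\beta)$ and the affine Springer fiber for the associated $\gamma$. The most plausible form of this comparison is a global-to-local degeneration where $X(\beta)$ plays the role of a character variety for a punctured disk and the Hilbert scheme plays the role of the Hitchin fiber; the weight filtration on one side should match the perverse filtration on the other.

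The third step is to match gradings and install the polynomial $\C[x_{1},\dots,x_{r}]$-module structure on both sides. On the Hilbert scheme side this should come from tautological classes of the structure sheaves of universal subschemes (the $y$-ification of Section 5 providing the template), and on the Khovanov--Rozansky side from the action described in the previous section. Verifying that these actions intertwine under the putative isomorphism, together with Theorem \ref{th: tautological}(b) on the $\mathfrak{sl}_{2}$ symmetry, would organize both sides as modules with a curious hard Lefschetz structure, giving strong rigidity.

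The main obstacle, and the reason the conjecture remains open beyond special families, is precisely step two: there is at present no direct geometric map from a Hilbert scheme of a singular curve to the braid variety of its link, and the comparison of filtrations (weight vs.\ perverse) is subtle even in toy cases. In practice I would first test the plan on iterated torus knots, where both sides admit explicit $q,t$-combinatorial models (rational shuffle conjecture on one side, Gorsky--Mazin parking function formulas on the other), and attempt to promote the combinatorial match into a canonical geometric isomorphism via the categorified Chern character / Procesi bundle formalism of Section 7, rather than trying to produce the comparison in full generality from scratch.
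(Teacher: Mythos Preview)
The statement is labelled a \emph{conjecture} in the paper and is explicitly described there as ``wide open in general''; the paper offers no proof, only verifications in special cases (Examples \ref{ex: hopf}--\ref{ex: hilb y2=x2k+1}) and a survey of partial results (Maulik's Euler-characteristic theorem, the torus-knot case, the polynomial action). So there is no proof in the paper to compare your proposal against, and you are right to frame your answer as a research program rather than a proof.

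Your program is in fact very close to the strategy the paper itself sketches. Your ``step two'' --- comparing the braid variety to the affine Springer fiber with weight filtration matching perverse filtration --- is precisely Conjecture~\ref{conj: shende} (Shende's conjecture), and the paper flags the same obstacle you do, situating it in the $P=W$ framework of de~Cataldo--Hausel--Migliorini. Your ``step one'' is the content of Theorem~\ref{th: perverse} (Maulik--Yun, Migliorini--Shende), and your ``step three'' is the module-structure discussion the paper attributes to \cite{Kivinen,MS,MY,Rennemo}. So your outline is not a new route but a faithful summary of the existing partial picture; the honest statement is that the conjecture remains open and your plan inherits the same gap (the missing comparison of filtrations in step two) that blocks everyone else.
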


\begin{remark}
In \cite{ORS} there is also a conjectural model for higher Hochschild degrees using slightly more complicated moduli spaces.
\end{remark}

In the next few examples, we will verify Conjecture \ref{conj: ORS} in some special cases. 

\begin{example}\label{ex: hopf}
Let us describe the Hilbert schemes $\Hilb^n(C,0)$ for the node $\{xy=0\}$, which corresponds to the Hopf link $L = T(2,2)$. 

First, we note that a (topological) basis of $\mathcal{O}_{C, 0}$ is given by the monomials $1, x, x^2, x^3, \dots, y, y^2, y^3, \dots$. Now we claim that a nonzero ideal $I \subseteq \mathcal{O}_{C, 0}$ satisfies exactly one of the three following properties:

\begin{itemize}
    \item[(a)] It is properly contained in $\C[[x]] \subseteq \mathcal{O}_{C, 0}$.
    \item[(b)] It is properly contained in $\C[[y]] \subseteq \mathcal{O}_{C, 0}$.
    \item[(c)] It has finite codimension. 
\end{itemize}

The ideals in (a) are precisely those of the form $(x^{n})$ for $n > 0$, they are properly contained in $\C[[x]]$ thanks to the relation $yx = 0$ and they do not have finite codimension, as $\C[[y]] \subseteq \mathcal{O}_{C, 0}/(x^n)$. The ideals in (b) are similarly described. Finally, if an ideal $I$ is not properly contained in $\C[[x]]$ nor in $\C[[y]]$, then there exist $m, n$ such that $x^{m}, y^{n} \in I$. But then the quotient $\mathcal{O}_{C, 0}/I$ is spanned by $1, x, \dots, x^{m-1}, y, \dots, y^{n-1}$, so $I$ has finite codimension. We will now focus on ideals satisfying (c). 

We claim that every ideal of finite codimension is either principal, of the form $(\alpha x^{i} + \beta y^{j})$ for $\alpha \neq 0, \beta \neq 0$ or an ideal of the form $(x^{i}, y^{j})$. Assume first that $I$ is principal, so $I = (f)$. We may also assume that $I \neq \mathcal{O}_{C, 0}$. Since $I$ has finite codimension, $f = g(x) + h(y)$, where $g(x), h(y) \neq 0$. Multiplying by a unit in $\C[[x]]$ and then by a unit in $\C[[y]]$, we see that $f$ may be assumed to be of the form $\alpha x^{i} + \beta y^{j}$, where $i = \nu(g), j = \nu(h)$ are the valuations of $g$ and $h$, that is, the minimal power of $x$ (resp. $y$) that appears with nonzero coefficient in $g$ (resp. $h$). 

Now assume that $I$ is not principal, say $I = (\alpha_{k}x^{i_k} + \beta_{k}y^{j_{k}})$ for some $k > 1$. Note that $(\alpha x^{i} + \beta y^{j}) \supseteq (\alpha'x^{m} + \beta' y^{n})$ if $i < m$ and $j < n$. From here, we can see that every non-principal ideal is of the form $(x^{i}, y^{j})$. 

Now, $\mathcal{O}_{C, 0}/(x^{i}, y^{j})$ has basis $1, x, \dots, x^{i-1}, y, \dots, y^{j-1}$, so $(x^{i}, y^{j}) \in \operatorname{Hilb}^{i+j-1}(C, 0)$. On the other hand, note that $(\alpha x^{i} + \beta y^{j})$ contains both $x^{i+1}$ and $y^{j+1}$. From here, we can see that $(\alpha x^{i} + \beta y^{j}) \in \operatorname{Hilb}^{i+j}(C, 0)$. 

Finally, since all the ideals of the form $(\alpha x^{i} + \beta y^{j})$ with $\alpha, \beta \neq 0$ contain $y^{j+1}$ and $x^{i+1}$, we see that
$$
\lim_{\alpha \to 0}(\alpha x^{i} + \beta y^{j}) = (x^{i+1}, y^{j}), \qquad  \lim_{\beta \to 0}(\alpha x^{i} + \beta y^{j}) = (x^{i}, y^{j+1})
$$
\noindent so that $\{(\alpha x^{i} + \beta y^{j}), (x^{i+1}, y^{j}), (x^{i}, y^{j+1}) \mid \alpha, \beta \neq 0\} = \mathbb{P}^{1} \subseteq \Hilb^{i+j}(C, 0)$, and we get that for $n > 1$, $\Hilb^{n}(C, 0)$ is a chain of $n-1$ projective lines $\mathbb{P}^{1}$, dual to the $A_{n}$ Dynkin diagram, where each $\mathbb{P}^{1}$ corresponds to a way of writing $n$ as a sum of two positive integers. The remaining cases, $\Hilb^{1}(C, 0)$ and $\Hilb^{0}(C, 0)$ are clearly just points. 

Let us verify Conjecture \ref{conj: ORS} in this case. The generating function for the Poincar\'e polynomials of $\Hilb^k(C,0)$ equals
$$
\sum_{k,i}q^kt^i\dim H^i(\Hilb^k(C,0))=1+q+q^2(1+t^2)+q^3(1+2t^2)+\ldots=\frac{1}{1-q}+\frac{q^2t^2}{(1-q)^2}.
$$
This agrees with the Poincar\'e polynomial of $\HHH^0(L)$, computed in Example \ref{ex: t22}, 
up to the change of variables $t\mapsto 1/qt^2$  and the computation in Example \ref{ex: two strand recursion} up to the change of variables $t\mapsto 1/q^2t$. 
\end{example}

\begin{example}\label{ex: cusp}
Let us describe the Hilbert schemes for the cusp $\{x^3=y^2\}$ which we can parametrize by $x=t^2,y=t^3$ so that $\CO_{C,0}=\C[[t^2, t^3]]$.

First, we claim that a nonzero ideal $I \subseteq \C[[t^2, t^3]]$ has finite codimension. Let $I$ be such an ideal and $0 \neq f \in I$. Let $g(t) \in \C[[t]]$ be a unit such that $gf = t^{k}$. Note that $g(t)$ may not belong to $\C[[t^2, t^3]]$ but $t^2g$ does. Thus, $t^{k+2} \in I$, and we have proved that every nonzero ideal contains a monomial. It remains to observe that $\C[[t^2, t^3]]/(t^{k})$ has $1, t^2, t^3, \dots, t^{k-1}, t^{k+1}$ as a basis, so every ideal generated by a monomial is finite codimensional and the claim follows. 

Now we claim that every proper ideal in $\C[[t^2, t^3]]$ is either principal, of the form $(t^k + \lambda t^{k+1})$ for some $\lambda \in \C$, or of the form $(t^k, t^{k+1})$. Assume first that $I$ is principal, say $I = (f)$. Let $k = \nu(f)$. We have seen in the paragraph above that $t^{k+2}, t^{k+3}, \dots \in I$, so we may assume $f$ has the form $t^{k} + \lambda t^{k+1}$, as needed. Now note that $(t^{\ell} + \mu t^{\ell + 1}) \subseteq (t^{k} + \lambda t^{k+1})$ if $\ell > k + 1$. From here, it follows that the non-principal ideals have the form $(t^k, t^{k+1})$.

Now note that $\C[[t^2, t^3]]/(t^{k}, t^{k+1})$ has basis $1, t^2, t^3, \dots, t^{k-1}$, so we have that $(t^{k}, t^{k+1}) \in \Hilb^{k-1}(C, 0)$. On the other hand, $\C[[t^2, t^3]]/(t^{k} + \lambda t^{k+1})$ has basis $1, t^2, t^3, \dots, t^{k-1}, t^{k+1}$, and $(t^{k} + \lambda t^{k+1}) \in \Hilb^{k}(C, 0)$.

To conclude, we have that $\Hilb^{0}(C, 0)$ and $\Hilb^{1}(C, 0)$ are both points, while for $k \geq 2$
$$
\Hilb^{k}(C, 0) = \{(t^{k} + \lambda t^{k+1}), (t^{k+1}, t^{k+2}) \mid \lambda \in \C\} = \mathbb{P}^{1}.
$$
The generating function has the form
$$
1+q+q^2(1+t^2)+q^3(1+t^2)+\ldots=\frac{1+q^2t^2}{1-q}.
$$

\noindent which again coincides with the Poincar\'e polynomial of $\HHH^{0}(T(2,3))$, computed in Example \ref{ex: t22} up to the change of variables $t \mapsto 1/q^2t$. 
\end{example}

\begin{example}\label{ex: hilb y2=x2k+1}
Similarly to Example \ref{ex: cusp}, let us describe the Hilbert schemes for the curve $\{x^{2k+1}=y^2\}$. Let us, first, classify the principal ideals in $\C[[t^2, t^{2k+1}]]$. Just as in Example \ref{ex: cusp}, we can see that if $I = (f)$ is a principal ideal then we may assume $f$ has the form $f = t^{m} + a_{1}t^{m+1} + \cdots + a_{2k-1}t^{m + 2k - 1}$. Note, however, that we may multiply $f$ by powers of $t^{2}$ to get rid of the monomials of the form $t^{m + 2i}$. Thus, the principal ideals of $\C[[t^2, t^{2k+1}]]$ have the form:
 $$
 \begin{array}{l}
 (t^2 + \lambda_1t^{2k+1}) \in \Hilb^2(C, 0) \\
 (t^{4} + \lambda_1 t^{2k+1} + \lambda_2 t^{2k+3}) \in \Hilb^4(C, 0) \\
 \vdots \\
 (t^{2k - 2} + \lambda_1 t^{2k+1} + \cdots + \lambda_{k-1}t^{2k -2 + 2k - 1}) \in \Hilb^{2(k-1)}(C,0) \\
 (t^{m} + \lambda_1t^{m+1} + \lambda_t^{m+3} + \cdots + \lambda_{k}t^{m + 2k - 1}) \in \Hilb^{m}(C, 0)
 \end{array}
 $$
 
 \noindent where $m \geq 2k$.  \\
 
 Assume now that $I$ is not principal, say $I = (f_{i})_{i = 1}^{\ell}$. We may assume that each $f_{i}$ has the form stated above, that is, $f_{i} = t^{m_{i}} + \lambda_{i,1}t^{m_{i}+1} + \cdots + \lambda_{i, k}t^{m_{i} + 2k - 1}$. Note that if $m_{1} := \min(m_{i})$, then $I$ already contains all monomials of the form $t^{m_{1} + 2m + j}$. So we may assume that $m_{1} \leq m_{i} \leq m_{1} + 2k - 1$ for all $i$. Moreover, if $m_{i} = m_{j}$ then we can replace $(f_{i}, f_{j})$ by $(f_{i}, f_{j} - f_{i})$ and we get rid of the $t^{m_{j}}$ term in $f_{j}$. In other words, we may assume that $m_{i} \neq m_{j}$ if $i \neq j$. Moreover, if $m_{j} = m_{i} + 2p$, then we can substitute $(f_{i}, f_{j})$ with $(f_{i}, f_{j} - t^{2p}f_{i})$ to get rid of the $t^{m_{j}}$ term in $f_{j}$. So we may assume that all $m_{i}$'s have different parity. To recap: an ideal is generated by at most $2$ elements. We have
$$
\begin{array}{l}
(t^{2}, t^{2k+1}) \in \Hilb^{1}(C, 0) \\
(t^{4}, t^{2k+1}) \in \Hilb^{2}(C, 0), (t^{4} + \lambda_{1}t^{2k+1}, t^{2k+3}) \in \Hilb^{3}(C, 0) \\
(t^{6}, t^{2k+1}) \in \Hilb^{3}(C, 0), (t^{6} + \lambda_{1}t^{2k+1}, t^{2k+3}) \in \Hilb^{4}(C, 0),\\
(t^{6} + \lambda_{1}t^{2k+1} + \lambda_{2}t^{2k+3}, t^{2k+5}) \in \Hilb^{5}(C,0) \\
\vdots \\
(t^{m}, t^{m+1}) \in \Hilb^{m-k}(C, 0) \\
(t^{m} + \lambda_1 t^{m+1}, t^{m+3}) \in \Hilb^{m-k+1}(C, 0) \\
\vdots \\
(t^{m} + \lambda_1 t^{m+1} + \cdots + \lambda_{k-1}t^{m+2k-3}, t^{m+2k-1}) \in \Hilb^{m-1}(C, 0)
\end{array}
$$

Let us briefly justify why we can assume the second generator, of higher degree, is simply a monomial. We do this in the case $I = (t^{m}, t^{m+1})$, which is the most involved one. We will make heavy use of the fact that $t^{m+2k}\C[[t]] \subseteq I$. A priori, we have the ideal
$$
(t^{m} + \lambda_1 t^{m+1} + \cdots + \lambda_{k}t^{m-2k+1}, t^{m+1} + \mu_1 t^{m+2} + \mu_2 t^{m+4} + \cdots + \mu_{k-1}t^{m + 2k - 2})
$$
We can multiply $f:= t^{m} + \lambda_1 t^{m+1} + \cdots + \lambda_{k}t^{m-2k+1}$ by $\mu_{k-1}t^{2k-2}$ and subtract to $g := t^{m+1} + \mu_1 t^{m+2} + \mu_2 t^{m+4} + \cdots + \mu_{k-1}t^{m + 2k - 2}$ to get rid of the $\mu_{k-1}t^{m+2k-2}$ term. This introduces a monomial of the form $t^{m+2k-1}$ in $g$. But now we can multiply $g$ by a scalar multiple of $t^{2k-2}$ to get rid of this term, too. We have substituted $g$ by $g_1 = t^{m+1} + \mu_1 t^{m+2} + \mu_2 t^{m+4} + \cdots + \mu_{k-2}t^{m + 2k - 4}$. Note that the coefficients $\mu_1, \mu_2, \dots, \mu_{k-2}$ have not changed.

Now we can do the same trick and multiply $f$ by an appropriate scalar multiple of $t^{2k-4}$ to get rid of the $t^{2k-4}$ term in $g_{1}$. This will introduce $t^{m+2k-3}$ and $t^{m+2k-1}$ terms. As we have seen before, we can easily get rid of the $t^{m+2k-1}$ term. If we want to get rid of the $t^{m+2k-3}$ term, we will introduce a $t^{m+2k-2}$ term, without introducing any new $t^{m+2k-4}$ terms. But we have seen that we can get rid of the $t^{m+2k-2}$ term. We can then use recursion to reduce $g$ to simply $t^{m+1}$. Then use this monomial to simplify $f$ to just $t^{m}$. Thus, we get: $\Hilb^{0}(C, 0)$ and $\Hilb^{1}(C, 0)$ are both points, while:
$$
\begin{array}{l}
\Hilb^{2}(C, 0) = \{(t^2 + \lambda_1t^{2k+1})\}\cup\{(t^{4}, t^{2k+1})\} \\
\Hilb^{3}(C, 0) = \{(t^{4} + \lambda_{1}t^{2k+1}, t^{2k+3})\}\cup\{(t^{6}, t^{2k+1})\} \\
\Hilb^{4}(C, 0) = \{(t^{4} + \lambda_1 t^{2k+1} + \lambda_2 t^{2k+3})\}\cup\{(t^{6} + \lambda_{1}t^{2k+1}, t^{2k+3})\}\cup\{(t^{8}, t^{2k+1})\} \\
\vdots \\
\Hilb^{m}(C, 0) = \{(t^{m} + \lambda_1t^{m+1} + \lambda_t^{m+3} + \cdots + \lambda_{k}t^{m + 2k - 1})\} \cup \\ \hspace{2.25cm} \{(t^{m+1} + \lambda_1 t^{m+2} + \cdots + \lambda_{k-1}t^{m + 2k - 2}, t^{m + 2k})\} \cup \cdots \cup \{(t^{m+k}, t^{m+k+1})\}
\end{array}
$$

\noindent for $m \geq 2k$. So we can pave $\Hilb^{m}(C,0)$ by affine spaces, and the closure of each one of these affine spaces equals the union of the affine spaces of smaller or equal dimension. Thus, we see that $\Hilb^{2}, \Hilb^{3}$ are homeomorphic to $\P^1$, $\Hilb^4, \Hilb^5$ have the homology of $\P^2$, $\dots$, $\Hilb^{m}$  have the homology of $\P^k$ for $m \geq 2k$ (and, in fact, $\Hilb^{m} \cong \Hilb^{n}$ for $m, n \geq 2k$). 

The generating function has the form
\begin{multline*}
1+q+(q^2+q^3)(1+t^2)+(q^4+q^5)(1+t^2+t^4)+\ldots+q^{2k}(1+t^2+\ldots+t^{2k})+\ldots=\frac{1+q^2t^2+q^4t^4+\ldots+q^{2k}t^{2k}}{1-q}.
\end{multline*}

\noindent Up to the change of variables $t \mapsto 1/qt^2$, this again coincides with the Poincar\'e polynomial of $\HHH^{a=0}(T(2, \linebreak 2k+1))$ computed in Example \ref{ex: t22}. 
\end{example}

In Examples \ref{ex: hopf}--\ref{ex: hilb y2=x2k+1} we have verified Conjecture \ref{conj: ORS} for the Hopf link as well as for positive $(2, 2k+1)$-torus knots. However, Conjecture \ref{conj: ORS} is wide open in general. Here we collect some facts and references for partial results:
\begin{itemize}
    \item[(a)] By a remarkable result of Maulik \cite{Maulik}, the generating function for the Euler characteristics of the Hilbert schemes matches the HOMFLY-PT polynomial:
    $$
    \sum_{k=0}^{\infty}q^k\chi(\Hilb^k(C,0))=\mathrm{HOMFLY-PT}(L;q,a=0).
    $$
    \item[(b)] For torus knots, both sides can be computed combinatorially and compared. The Khovanov-Rozansky homology is computed by Theorem \ref{th: torus knot recursions} above, while $\Hilb^k(C,0)$ has an explicit paving by affine cells. The combinatorial formula for the dimension of these cells is given in \cite{ORS} (see more details below), this yields an explicit formula for the Poincar\'e polynomial of the homology.
    \item[(c)] Recall that for an $r$-component link $L$ its homology $\HHH^0(L)$ has a natural action of the polynomial algebra $\C[x_1,\ldots,x_r]$. For $r=1$, this action on the Hilbert scheme side was constructed in \cite{MS,MY,Rennemo} and for $r>1$ it was constructed in \cite{Kivinen}. Roughly speaking, the operator $x_i$ adds a point on $i$-th component of the curve $C$, but one needs to use a versal deformation of $C$ to make it precise. We refer to \cite{Kivinen} for more details.
\end{itemize}

\subsection{Affine Springer theory}

Next, we would like to give yet another interpretation of $\Hilb^n(C,0)$ using geometric representation theory. Let us choose a projection of $C$ to some line, and let $n$ be the degree of this projection. We will regard the line as a local model for the ``base curve" and $C$ as a ``spectral curve''. 

\begin{remark}
The choice of the projection naturally splits the unit sphere in $\C^2$ as a union of two solid tori. Indeed, the equation of the sphere is $|x|^2+|y|^2=\varepsilon^2$ and the solid tori are $|x|^2\le \frac{\varepsilon^2}{2}$ and $|y|^2\le \frac{\varepsilon^2}{2}$. For $\varepsilon$ small the intersection of $C$ with a sphere defines a closed $n$-strand braid which is contained in one of the tori. This is known as a braid monodromy construction, see e. g. \cite{Artal} and references therein.
\end{remark}

We will use the following results.

\begin{lemma}\label{lem: companion matrix}
Let $C$ be a germ of an arbitrary plane curve (possibly non-reduced) given by the equation $\{f(x,y)=0\}$. 

(a) One can replace $f(x,y)$ by a polynomial of some degree $n$ in $x$ with coefficients given by power series in $y$.  

(b) A (topological) basis in $\mathcal{O}_{C,0}$ is given by monomials of the form $x^{a}y^{b}$, $a \leq n-1$. In other words, $\mathcal{O}_{C,0}$ is a free $\C[[y]]$--module of rank $n$ with basis $1,\ldots,x^{n-1}$.

(c) The  multiplication by $x$ and $y$ in this basis is given by the matrices:
$$
Y \mapsto \left(\begin{matrix} y & 0 & 0 & \cdots & 0 \\ 0 & y & 0 &  \cdots & 0 \\ 0 & 0 & y & \cdots & 0 \\ \vdots & \vdots & \vdots & \ddots & \vdots \\ 0 & 0 & 0 & \cdots & y \end{matrix} \right), \qquad X \mapsto \left( \begin{matrix} 0 & 0 & 0 & \cdots & -f_{0}(y) \\ 1 & 0 & 0 & \cdots & -f_{1}(y) \\ 0 & 1 & 0 & \cdots & -f_{2}(y) \\ \vdots & \vdots & \vdots & \ddots & \vdots \\ 0 & 0 & 0 & \cdots & -f_{n-1}(y) \end{matrix}\right)
$$
In particular, the characteristic polynomial of the second matrix equals $\det(X-x\cdot I)=f(x,y)$.
\end{lemma}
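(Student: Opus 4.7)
The plan is to deduce all three parts from the Weierstrass preparation and division theorems together with the companion-matrix identity for monic polynomials. Throughout, we view elements of $\mathcal{O}_{C,0}=\C[[x,y]]/(f)$ as power series modulo $f$, and we implicitly work with coordinates so that the chosen projection is $(x,y)\mapsto y$; this corresponds to the assumption $f(x,0)\not\equiv 0$, which we may always arrange by a generic linear change of variables (if some irreducible component of $C$ were the $y$-axis, we would rotate our coordinates).

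For part (a), the key input is the Weierstrass preparation theorem: given $f\in\C[[x,y]]$ with $f(x,0)$ vanishing at $x=0$ to finite order $n$, there exist a unit $u\in\C[[x,y]]^{\times}$ and a Weierstrass polynomial
\[
p(x,y)=x^n+f_{n-1}(y)x^{n-1}+\cdots+f_0(y)\in\C[[y]][x],
\]
with $f_i(0)=0$, such that $f=u\cdot p$. Since $u$ is a unit, $(f)=(p)$ in $\C[[x,y]]$, so we may replace $f$ by $p$ without changing $C$ as a scheme (this is where non-reducedness is harmless). The integer $n$ is precisely the degree of the projection to the $y$-axis, as it equals the intersection multiplicity $(C\cdot\{y=0\})_0$.

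For part (b), I would apply the Weierstrass division theorem to $p$: every $g\in\C[[x,y]]$ can be written uniquely as $g=q\cdot p+r$ with $q\in\C[[x,y]]$ and $r\in\C[[y]][x]$ of $x$-degree at most $n-1$. This immediately gives the decomposition $\C[[x,y]]=p\cdot\C[[x,y]]\oplus\bigoplus_{a=0}^{n-1}x^a\C[[y]]$, identifying $\mathcal{O}_{C,0}$ with the free $\C[[y]]$-module on $1,x,\ldots,x^{n-1}$.

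Finally, part (c) is a direct matrix computation once (b) is established. Multiplication by $y$ is $\C[[y]]$-linear and acts diagonally, hence its matrix is $y\cdot I_n$. Multiplication by $x$ sends $x^a\mapsto x^{a+1}$ for $a<n-1$, while the relation $p=0$ in $\mathcal{O}_{C,0}$ forces $x\cdot x^{n-1}=x^n=-\sum_{j=0}^{n-1}f_j(y)x^j$, producing exactly the companion matrix displayed in the statement. The claim $\det(X-xI)=f(x,y)$ is the standard fact that the characteristic polynomial of a companion matrix recovers the original monic polynomial, which can be verified by induction on $n$ via cofactor expansion along the last column (or first row). I don't anticipate a real obstacle here: the only mildly subtle point is the implicit genericity assumption on coordinates needed to invoke Weierstrass preparation, which is natural given the earlier choice of a projection.
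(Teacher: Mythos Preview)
Your proposal is correct and follows essentially the same approach as the paper: Weierstrass preparation for (a), the resulting free $\C[[y]]$-module structure for (b), and the direct companion-matrix computation for (c). Your invocation of Weierstrass division in (b) is slightly more explicit than the paper's argument, and you make the coordinate-genericity assumption explicit, but the underlying ideas are identical.
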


\begin{proof}
(a) We may assume that $f(0,0) = 0$. The Weierstrass preparation theorem says that in the local ring $\C[[x,y]]$ we can write $f$ as
$$
f = u(x^{n} + f_{n-1}(y)x^{n-1} + \cdots + f_{0}(y))
$$
where $u$ is a unit and $f_{n-1}(y), \dots, f_{0}(y) \in \C[[y]]$. Thus, we can write the local ring $\mathcal{O}_{C, 0} = \mathbb{C}[[x,y]]/(f)$ as $\mathbb{C}[[x,y]]/(x^n + \cdots + f_{0}(y))$. The value $n$ is the degree of the projection of the curve $f(x,y) = 0$ to the $y$-axis (i.e. the number of solutions of $f(x,y_0) = 0$ for generic $y_0$). 

(b) Thanks to part (a), we may replace $f$ by a polynomial of the form $x^n + f_{n-1}(y)x^{n-1} + \cdots + f_{0}(y)$. Thus, $\mathcal{O}_{C, 0}$ is the quotient of the algebra $\C[[x,y]]$ modulo the relation $x^{n} = -f_{n-1}(y)x^{n-1} - \cdots - f_{0}y$. It is easy to see that this is a free $\mathbb{C}[[y]]$-module with basis $1, x, x^2, \dots, x^{n-1}$. 

(c) Clearly, $y(x^{a}y^{b}) = x^{a}y^{b+1}$, while
$$
x(x^{a}y^{b}) = \begin{cases} x^{a+1}y^{b} & a < n-1 \\ (-f_{n-1}(y)x^{n-1} - \cdots - f_{0}(y))y^{b} & a = n-1 \end{cases}
$$
Which coincides with the formula in part (c). 
\end{proof}

\begin{remark}
Note that in this presentation the roles of $x$ and $y$ are not symmetric, and the value of $n$ depends on the choice of the projection.
\end{remark}

\begin{example}\label{ex:cusp}
For the cusp $C=\{x^2=y^3\}$ we have $\CO_{C,0}=\C[[x]]\langle 1,y,y^2\rangle$ so that $$Y=\left(\begin{matrix}0 & 0 & x^2\\ 1 & 0 & 0\\ 0 & 1 & 0\\ \end{matrix}\right).
$$
On the other hand, we can choose a different projection and write 
$\CO_{C,0}=\C[[y]]\langle 1,x\rangle$ so that $$X=\left(\begin{matrix}0 & y^3\\ 1 & 0\\ \end{matrix}\right).
$$
In both cases the characteristic polynomial equals (up to sign) $x^2-y^3$.
\end{example}

We will use Lemma \ref{lem: companion matrix} to give a description of $\Hilb^{N}(C, 0)$ when $N \gg 0$ and $C$ is irreducible, see also Section \ref{sec: gasf} below. First, let us recall that for the group $SL_n$ the {\bf affine Grassmannian} is the ind-variety
$$
Gr_{SL_n} := SL_n(\C((x)))/SL_n(\C[[x]]).
$$
The affine Grassmannian $Gr_{SL_n}$ has the following interpretation. A {\bf lattice} $V \subseteq \C((x))^n=\C^n((x))$ is a free $\C[[x]]$-submodule of rank $n$ such that $V\otimes_{\C[[x]]}\C((x))=\C^n((x))$. In other words, a lattice $V$ is the $\C[[x]]$-span of a $\C((x))$-basis $(v_1, \dots, v_n)$ of $\C^n((x))$. Let us say that a lattice $V$ is of $SL_n$-type if we can find such a basis so that the determinant of the matrix with columns $v_1, \dots, v_n$ is $1$. It is known then that the affine Grassmannian parametrizes such lattices,
$$
Gr_{SL_{n}} = \left\{V \subseteq \C^n((x)) : V \; \text{is a lattice of $SL_n$-type}\right\}.
$$

\begin{remark}
Of course, one can do a similar construction with $GL_n$ instead of $SL_n$, and obtain that the affine Grassmannian $Gr_{GL_n} = GL_n(\C((x)))/GL_n(\C[[x]])$ parametrizes all lattices in $\C^n((x))$. 
\end{remark}

Using this description, if $Y$ is an $n\times n$-matrix with coefficients in $\C((x))$ we can define the {\bf affine Springer fiber}
$$
\Sp_{Y}:= \left\{ V \in Gr_{SL_n} \mid YV \subseteq V \right\} \subseteq Gr_{SL_n}.
$$
We will be interested in the case when the matrix $Y$ comes from a polynomial $f(x,y)$ via the Weierstrass preparation theorem, as in Lemma \ref{lem: companion matrix} (with the roles of $x$ and $y$ interchanged) and Example \ref{ex:cusp}. In this case, the affine Springer fiber $\Sp_Y$ has the following properties: 

\begin{itemize}
    \item[(a)] If $(C,0)$ is irreducible then $\Sp_{Y}$ is isomorphic to the compactified Jacobian of $C$, that is, the moduli space of rank 1 torsion free sheaves ''of degree zero" on $C$ (see e.g. \cite{MY}). It is also isomorphic to the Hilbert scheme $\Hilb^N(C,0)$ for $N\gg 0$. In particular, $\Sp_Y$ is a projective variety.
    \item[(b)] If $(C,0)$ has $r$ components then there is an action of $\Z^{r-1}$ on $\Sp_{Y}$ by translations, and of $(\C^*)^{r-1}$. In particular, $\Sp_{Y}$ is an ind-variety with infinitely many irreducible components, all of the same dimension, which are permuted by the action of $\Z^{r-1}$.
 \end{itemize}
 \begin{remark}
 In the $GL_n$-case we drop the degree condition in part (a) above, and in (b) we get an action of $\Z^n$. The $GL_n$-affine Springer fibers can be interpreted as the compactified Picard schemes of $C$, and in this generality are simply unions of $\Z=\pi_1(GL_n)$ copies of the $SL_n$-affine Springer fibers.
 \end{remark}
\begin{remark}
\label{rem: jacobian}
Alternatively, one can define compactified Jacobian of $(C,0)$ by considering a parameterization of the curve $(x(t),y(t))$ so that $\CO_{C,0}=\C[[x(t),y(t)]]$. In this case the compactified Jacobian is the moduli space of $\CO_{C,0}$-submodules of $\C[[t]]$ up to a shift by a power of $t$.
\end{remark}

If $C$ is irreducible and reduced, there is a deep cohomological relationship between the affine Springer fiber/compactified Jacobian and the Hilbert schemes on $(C,0)$, coming from the natural Abel-Jacobi map interpreting ideal sheaves as torsion-free sheaves.

\begin{theorem}[\cite{MY,MS}]
\label{th: perverse}
One has
$$
\bigoplus_{k=0}^{\infty}H^*(\Hilb^k(C,0))=\gr_{P}H^*(\Sp_{Y})\otimes \C[x],
$$
where $\gr_P$ refers to the associated graded with respect to a certain ``perverse" filtration on the cohomology of $\Sp_{Y}$.

Furthermore, there is an action of $\mathfrak{sl}_2$ on $H^*(\Sp_{Y})$ satisfying ``curious hard Lefshetz" property with respect to the perverse filtration. 
\end{theorem}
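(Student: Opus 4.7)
The plan is to embed the germ $(C,0)$ into a versal deformation of the plane curve singularity. Let $\pi\colon \mathcal{C} \to B$ be such a deformation: $\mathcal{C}$ is smooth, the central fiber $\mathcal{C}_{0} \cong C$, and the generic fiber is smooth and projective once compactified. Over $B$ one forms two natural relative moduli spaces: the relative Hilbert schemes $\pi_{k}\colon \Hilb^{k}(\mathcal{C}/B) \to B$ and the relative compactified Jacobian $\bar{\pi}\colon \overline{\mathcal{J}}(\mathcal{C}/B) \to B$, each projective. Specializing to $0 \in B$ recovers $\Hilb^{k}(C,0)$ and (via Remark \ref{rem: jacobian}) the compactified Jacobian of $C$, which is identified with $\Sp_{Y}$ by the discussion preceding the theorem.

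The next step is to apply the BBDG decomposition theorem to both $\pi_{k}$ and $\bar{\pi}$. A priori,
$$
R\bar{\pi}_{*}\underline{\mathbb{Q}}[\dim] \;\simeq\; \bigoplus_{\alpha} IC(Z_{\alpha}, \mathcal{L}_{\alpha})[n_{\alpha}],
$$
with $Z_{\alpha} \subseteq B$ closed and $\mathcal{L}_{\alpha}$ simple local systems on open subsets. The crucial input is a \emph{support theorem}: every $Z_{\alpha}$ equals $B$, so only full-support summands contribute. Following Ng\^o's template from the Hitchin setting, this is established using $\delta$-regularity and codimension estimates on the discriminant stratification of $B$, which is precisely what Migliorini-Shende and Maulik-Yun verify in this local plane-curve situation. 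The corresponding statement for $\pi_{k}$ is then deduced using the Abel-Jacobi map as a bridge.

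Once support is under control, both sides are governed by monodromy of local systems on a dense open $B^{\circ} \subseteq B$. The Abel-Jacobi map $AJ\colon \Hilb^{k}(\mathcal{C}/B) \to \overline{\mathcal{J}}(\mathcal{C}/B)$ becomes a projective bundle for $k$ large, and its Leray spectral sequence contributes the polynomial algebra $\mathbb{C}[x]$ generated by the tautological hyperplane class. Assembling the stable Abel-Jacobi contributions across all $k$, restricting to the central fiber, and matching perverse-graded pieces of the two decomposition theorems yields
$$
\bigoplus_{k \geq 0} H^{*}(\Hilb^{k}(C,0)) \;=\; \gr_{P} H^{*}(\Sp_{Y}) \otimes \mathbb{C}[x].
$$

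For the second assertion, the relative hard Lefschetz theorem applied to $\bar{\pi}$, combined with the support theorem, produces an $\mathfrak{sl}_{2}$-action on $H^{*}(\Sp_{Y})$: the raising operator is cup product with a relative ample class, which shifts perverse degree by $2$, and the splitting of the perverse filtration over $B^{\circ}$ produces the lowering operator. The ``curious'' aspect is that the grading is measured by perverse degree rather than cohomological degree, so hard Lefschetz manifests on $\gr_{P}$. The principal obstacle, as in all incarnations of this circle of ideas, is the support theorem itself: ruling out intermediate-extension summands from proper closed subsets of $B$ forces one into delicate geometric estimates on how the Jacobians and Hilbert schemes of singular fibers degenerate along the discriminant, and this — rather than the formal decomposition-theorem bookkeeping — is where the real content of \cite{MY,MS} resides.
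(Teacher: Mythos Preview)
The paper does not give its own proof of this theorem: it is stated with attribution to \cite{MY,MS} and followed only by remarks pointing to those references and to \cite{Rennemo}. So there is no in-paper argument to compare against.

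That said, your outline is an accurate sketch of the strategy in \cite{MY,MS}: embed $(C,0)$ in a versal family over a base $B$, form the relative Hilbert schemes and the relative compactified Jacobian, apply the decomposition theorem to each, and use a Ng\^o-type support theorem to force all perverse summands to have full support $B$. The Abel--Jacobi map then identifies the two decompositions up to the $\C[x]$ factor coming from the projective-bundle structure in the stable range, and relative hard Lefschetz for the compactified Jacobian family gives the $\mathfrak{sl}_2$-action with the perverse grading playing the role of the weight. Your closing remark that the support theorem is where the genuine work lies is exactly right; the rest is, as you say, decomposition-theorem bookkeeping. One small caution: the identification of $\Sp_Y$ with the central fiber of $\overline{\mathcal{J}}(\mathcal{C}/B)$ uses the irreducibility hypothesis on $(C,0)$, which the paper notes just before the theorem and in the remark immediately after; you invoke this implicitly but it is worth flagging since the reducible case behaves differently.
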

\begin{remark}
A slightly weaker version of the Theorem also holds for the reducible case with appropriate modifications, as shown in \cite[Theorem 3.11]{MY}. A representation-theoretic proof in the irreducible case is given in \cite{Rennemo}.
\end{remark}
 
The action of $\mathfrak{sl}_2$ is similar to the action on the cohomology of the braid variety (with weight filtration) from Theorem \ref{th: sl2 braid variety} and to the action in link homology from Theorem \ref{th: tautological}. 

\begin{conjecture}[Shende \cite{STZ}]
\label{conj: shende}
Let $C$ be an irreducible plane curve singularity, $L$ the corresponding algebraic knot and $\beta$ the corresponding braid on $n$ strands. Then one has the isomorphism of the compactly supported cohomology of the braid variety and the singular cohomology of the affine Springer fiber
$$
H_c^*(X(\beta\Delta;w_0)/(\C^*)^{n-1})\simeq H^*(\Sp_{Y})
$$
where the (halved) weight filtration on the left hand side matches the perverse filtration on the right.
\end{conjecture}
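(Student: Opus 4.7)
The plan is to derive Shende's conjecture by chaining together the three principal results already established in this survey, rather than by constructing a direct geometric map between the two sides; a direct comparison would amount to a local version of the non-abelian Hodge correspondence alluded to in the introduction, which is outside the present toolkit.

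First, I would translate the left-hand side into link homology. Since $X(\beta\Delta;w_{0})$ is smooth by Theorem \ref{th: braid variety}(b) and $(\C^{*})^{n-1}$ acts freely by Lemma \ref{lem: torus action} (using that $L$ is a knot), Poincar\'e duality combined with Theorem \ref{th: braid variety homology} identifies $\gr_{W}H_{c}^{*}(X(\beta\Delta;w_{0})/(\C^{*})^{n-1})$ with $\HHH^{n}(\beta\Delta)$ modulo the equivariant parameters, up to an overall shift. By the result of \cite{GHMN} quoted in the excerpt, $\HHH^{n}(\beta\Delta) = \HHH^{0}(\beta\Delta^{-1})$, and a careful bookkeeping of the conjugation and positive-stabilization invariance of $\HHH^{0}$ then identifies this with $\HHH^{0}(L)$ up to a grading shift, since $\beta\Delta^{-1}$ and $\beta$ have the same closure topologically after resolving the extra half-twist.

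Second, I would invoke Conjecture \ref{conj: ORS} to rewrite $\HHH^{0}(L) \simeq \bigoplus_{k}H^{*}(\Hilb^{k}(C,0))$, and then apply Theorem \ref{th: perverse} to obtain $\gr_{P}H^{*}(\Sp_{Y})\otimes\C[x]$. The polynomial factor $\C[x]$ on the right corresponds to the action of the marked point on $\HHH^{0}(L)$, and on the braid-variety side it should be absorbed by the difference between the full torus $T = (\C^{*})^{n}$ governing the equivariant parameters in Theorem \ref{th: braid variety homology} and the smaller $(\C^{*})^{n-1}$ quotient taken in the statement of the conjecture.

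Third, I would upgrade the resulting isomorphism of graded vector spaces to one compatible with filtrations. Both the weight filtration on the braid-variety side and the perverse filtration on the affine-Springer side satisfy curious hard Lefschetz (Theorems \ref{th: sl2 braid variety} and \ref{th: perverse} respectively), and an analogous $\mathfrak{sl}_{2}$-action exists on $\HY(L)$ via the tautological operator $F_{2}$ of Theorem \ref{th: tautological}. The strategy is to use $F_{2}$ as an intermediary: match it on one side with cupping with the Bott-Shulman-Stasheff 2-form of Theorem \ref{th: sl2 braid variety}, and on the other side with the perverse Lefschetz operator from Theorem \ref{th: perverse}. Once the three $\mathfrak{sl}_{2}$-actions are identified, the filtrations align automatically.

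The main obstacle is that Conjecture \ref{conj: ORS} is itself wide open, so this plan is necessarily conditional on ORS. Even granting ORS, matching the three a priori unrelated $\mathfrak{sl}_{2}$-actions is the most delicate point, as they arise from very different geometric sources: a symplectic form on a cluster-like variety, a relative hard-Lefschetz operator on a compactified Jacobian, and a coproduct-based operator on the dg algebra $\CA$. Showing that these three operators intertwine under the vector-space isomorphisms produced by Theorems \ref{th: braid variety homology} and \ref{th: perverse} together with Conjecture \ref{conj: ORS} is where the essential conceptual work lies, and it is where any unconditional progress would presumably need to start.
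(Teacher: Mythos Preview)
The statement you are addressing is stated in the paper as an \emph{open conjecture}; the paper offers no proof, only a remark that it fits into the $P=W$ framework of de Cataldo--Hausel--Migliorini. There is therefore no paper proof to compare your proposal against, and any argument you give is necessarily a motivation rather than a proof.

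Your proposal is honest about this: you explicitly condition on Conjecture~\ref{conj: ORS}, which is itself open. As a plausibility outline the chain Theorem~\ref{th: braid variety homology} $\Rightarrow$ Conjecture~\ref{conj: ORS} $\Rightarrow$ Theorem~\ref{th: perverse} is exactly the intended web of relations. Two technical points are worth correcting, though.

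First, an indexing slip. Theorem~\ref{th: braid variety homology} is stated for $X(-)$ with the trivial boundary condition, not for $X(-;w_0)$. Since $X(\gamma;w_0)\times\C^{\binom{n}{2}}\cong X(\gamma\Delta)$, the variety $X(\beta\Delta;w_0)$ corresponds (up to an affine factor) to $X(\beta\Delta^{2})$, whose equivariant Borel--Moore homology gives $\HHH^{n}(\beta\Delta^{2})=\HHH^{0}(\beta)=\HHH^{0}(L)$ by \cite{GHMN}. So your detour through $\HHH^{0}(\beta\Delta^{-1})$ and the claim that ``$\beta\Delta^{-1}$ and $\beta$ have the same closure'' is both unnecessary and, as stated, incorrect.

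Second, and more seriously, even granting ORS your chain only produces an isomorphism of \emph{associated gradeds}, $\gr_{W}(\mathrm{LHS})\simeq \gr_{P}(\mathrm{RHS})$, whereas the conjecture asserts an isomorphism of the cohomologies themselves under which the two filtrations match. Your step three proposes to repair this by aligning the three $\mathfrak{sl}_{2}$-actions, but curious hard Lefschetz does not determine a filtration up to filtered isomorphism: two filtered vector spaces can have isomorphic associated gradeds and both satisfy curious Lefschetz without being filtered-isomorphic. Bridging this gap is precisely the local $P=W$ statement, and is the genuine content of the conjecture rather than a technicality to be patched at the end.
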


\begin{remark}
Conjecture \ref{conj: shende} is closely related to the framework of so-called $P=W$ conjectures of de Cataldo-Hausel-Migliorini \cite{dCHM} relating the weight filtration on the cohomology of the character varieties and their cousins (such as braid varieties) and the perverse filtration on the cohomology of the Hitchin moduli spaces and their cousins (such as affine Springer fibres). We refer to \cite{dCHM} for more context.
\end{remark}

\begin{example}
Let $C=\{x^2=y^2\}$, then
$$
X=\left(\begin{matrix}0 & y^2\\ 1 & 0\\ \end{matrix}\right).
$$
The corresponding affine Springer fiber is an infinite chain of $\P^1$, with the lattice $\Z$ acting by translations.
\end{example}

\begin{example}
Let us compute the homology of the compactified Jacobian of the singularity $\{x^{m} = y^{n}\}$. As in Remark \ref{rem: jacobian} we are classifying $\C[[t^n,t^m]]$-submodules $M \subseteq \C[[t]]$. Any element of $\C[[t]]$ has an {\em order}, that is, minimal degree in $t$ with a nonzero coefficient. Up to a shift by a power of $t$, we can assume that such a module $M$ contains an element of order 0 in $t$. Let $\Gamma_{m,n}$ denote the semigroup generated by $m$ and $n$, then for each element of $\Gamma_{m,n}$ there is a corresponding element of $M$ and it is completely determined by the element of order 0. Also, $\Gamma_{m,n}$ contains all integers starting from $(m-1)(n-1)$, hence $t^{(m-1)(n-1)}\C[[t]]\subset M$.
We have the following cases:

(a) $(m,n)=(2,3)$. We have $\Gamma_{2,3}=\{0,2,3,4,\ldots\}$ and
there are two types of $\C[[t^2,t^3]]$--modules:
$$
(1+\lambda t),\ (1,t)=\C[[t]].
$$
The first family of modules forms an affine line, so altogether we get $\overline{JC_{2,3}}=\P^1$.

(b) $(m,n)=(2,2k+1)$. We have 
$$
\Gamma_{2,2k+1}=\{0,2,\ldots,2k,2k+1,2k+2,\ldots\}
$$
and
there are the following types of $\C[[t^2,t^{2k+1}]]$--modules:
$$
 \begin{array}{l}
(1+\lambda_1 t+\ldots+\lambda_{k}t^{2k-1}),\\
(1+\lambda_1 t+\ldots+\lambda_{k-1}t^{2k-3},t^{k-1}),\\
\vdots \\
(1+\lambda_1 t, t^3,\ldots,t^{k-1}),\\
(1,t,t^3,\ldots,t^{k-1})=\C[[t]].
\end{array}
$$
This yields a cell decomposition of the compactified Jacobian with one cell of dimensions $k,k-1,\ldots,1,0$. The reader should compare this with Example \ref{ex: hilb y2=x2k+1}.

(c) $(m,n)=(3,4)$. We have 
$$
\Gamma_{3,4}=\{0,3,4,6,7,\ldots\}
$$
and
there are the following types of $\C[[t^3,t^{4}]]$--modules:
$$
\begin{array}{l}
(1+\lambda_1t+\lambda_2t^2+\lambda_3t^5),\\
 (1+\lambda_1t+\lambda_2t^2,t^5),\\
 (1+\lambda_1t,t^2,t^5),\\
 (1+\lambda_1t+\lambda_2t^2,t+\mu t^2, t^5),\\
 (1,t,t^2)=\C[[t]].
\end{array}
$$
In the fourth case we can change basis to
$$
(1+(\lambda_2-\mu\lambda_1)t^2,t+\mu t^2, t^5),
$$
so we can assume $\lambda_1=0$ and there are two parameters $\mu,\lambda_2$. The compactified Jacobian then has one 3-cell (first case), two 2-cells (second and fourth cases), one 1-cell and one 0-cell. It is a singular 3-dimensional variety with homology given by the following table 
\begin{center}
\begin{tabular}{|c|c|c|c|c|c|c|c|}
\hline
  & $H^0$ & $H^1$ & $H^2$ & $H^3$ & $H^4$ & $H^5$ & $H^6$ \\
\hline
     $k-p=0$ & 1 & 0 & 1 & 0 & 1 & 0 & 1 \\
\hline
     $k-p=1$ &   0 & 0 & 0 & 0 & 1 & 0 & 0 \\
\hline    
\end{tabular}
\end{center}
The rows indicate the difference between the homological degree and the perverse filtration, so that $H^4$ has rank 2 and nontrivial perverse filtration. This table matches the one in 
Example \ref{ex: E6 homology}.
\end{example}

For general coprime $(m,n)$ the compactified Jacobian of $C=\{x^m=y^n\}$ is always paved by affine cells, and the dimensions of these cells can be computed by a combinatorial formula which can be written in several different ways \cite{Hikita,GMV,GM1,GM2,LuSm,Piontkowski}. We will not write an explicit formula but mention the following:

\begin{lemma}
The Euler characteristic of the compactified Jacobian of $C=\{x^m=y^n\}$ equals the {\bf rational Catalan number}:
$$
c_{m,n}=\frac{(m+n-1)!}{m!n!}=\frac{1}{m+n}\binom{m+n}{n}.
$$
\end{lemma}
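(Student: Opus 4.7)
The plan is to reduce the computation of $\chi(\overline{JC_{m,n}})$ to a combinatorial count, using the affine paving mentioned just before the statement (which has already been introduced via the Piontkowski-type arguments and is exemplified in the worked $(2,3)$, $(2,2k+1)$ and $(3,4)$ cases). Since each cell is an affine space, hence contractible with Euler characteristic $1$, the Euler characteristic equals the number of cells. So the task is entirely combinatorial: count the cells.

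Next, I would identify the cells of $\overline{JC_{m,n}}$ with \emph{$\Gamma_{m,n}$-semi-modules}, i.e.\ subsets $\Delta \subseteq \Z_{\geq 0}$ with $0 \in \Delta$ and $\Delta + \Gamma_{m,n} \subseteq \Delta$ (automatically containing $\Z_{\geq (m-1)(n-1)}$, so the complement is finite). The identification is the same one that already appeared in the examples: using the parametrization $\CO_{C,0}=\C[[t^m,t^n]]$ of Remark~\ref{rem: jacobian}, any $\CO_{C,0}$-submodule $M \subseteq \C[[t]]$ normalized so that its minimal $t$-order is $0$ has a well-defined set of orders $\Delta(M) \subseteq \Z_{\geq 0}$, which is automatically a $\Gamma_{m,n}$-semi-module; the stratification by $\Delta$ is exactly the affine cell decomposition. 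Equivalently, one can invoke the scaling $\C^*$-action $t \mapsto \lambda t$: its fixed points are precisely the monomial modules, which are in bijection with $\Gamma_{m,n}$-semi-modules, and by Bialynicki-Birula / localization $\chi(\overline{JC_{m,n}})$ equals the number of such fixed points.

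I would then conclude by a bijective count of $\Gamma_{m,n}$-semi-modules, using the classical fact (when $\gcd(m,n)=1$) that they are equinumerous with $(m,n)$-Dyck paths, i.e.\ lattice paths in the $m \times n$ rectangle from $(0,0)$ to $(m,n)$ staying weakly below the diagonal. Concretely, one encodes $\Delta$ by reading its characteristic function along the interval $\{0,1,\ldots,m+n-1\}$ (or, equivalently, by the generators of $\Delta$ modulo $m$), producing a $0/1$-word with $m$ zeros and $n$ ones satisfying a cyclic positivity condition; this is exactly the combinatorial data of an $(m,n)$-Dyck path. The cycle lemma of Dvoretzky--Motzkin then yields the count
\[
\#\{(m,n)\text{-Dyck paths}\} \;=\; \frac{1}{m+n}\binom{m+n}{m} \;=\; \frac{(m+n-1)!}{m!\,n!},
\]
which is the rational Catalan number $c_{m,n}$.

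The main obstacle is the bijective step: making the correspondence between $\Gamma_{m,n}$-semi-modules and $(m,n)$-Dyck paths fully rigorous, and in particular checking that every $\Gamma_{m,n}$-semi-module is hit exactly once (including the global ones like $\Z_{\geq 0}$ itself), since the finiteness hypothesis on $\Z_{\geq 0} \setminus \Delta$ and the correct normalization (minimal element equals $0$) have to be handled carefully. The rest of the argument — affine paving $\Rightarrow$ Euler characteristic equals cell count, and the cycle-lemma count of Dyck paths — is standard once the combinatorial model is in place.
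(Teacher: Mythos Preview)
Your proposal is correct and follows essentially the same approach as the paper: reduce to counting $\C^*$-fixed points, identify these with $\Gamma_{m,n}$-semi-modules normalized to contain $0$, and then biject with sub-diagonal lattice paths in the $m\times n$ rectangle. The only cosmetic difference is in the combinatorial endgame: the paper makes the bijection explicit by filling the boxes of the rectangle with the numbers $mn-mx-ny$ (so that the gaps of $\Gamma_{m,n}$ sit exactly below the diagonal and $\Delta$ corresponds to a Young diagram there) and then simply asserts the path count, whereas you leave the bijection a bit sketchier but invoke the cycle lemma to justify the final count.
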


\begin{proof}
The natural $\C^*$-action on the curve lifts to the $\C^*$ action on the Hilbert scheme of points and on the compactified Jacobian. It is easy to see that (if $m,n$ are coprime) the only fixed points of this action are ideals (resp. submodules) generated by monomials in $t$. The fixed points in the compactified Jacobian then correspond (up to translation by an integer) to the subsets $\Delta\subset \Z_{\ge 0}$ which are invariant under both shifts by $m$ and $n$, that is, $\Delta+m\subset \Delta,\Delta+n\subset \Delta$. There are several ways to enumerate such subsets, here is one.

Since we are considering invariant subsets of $\Z_{\geq 0}$ up to translation by an integer, we may assume that $0\in \Delta$, so that the whole semigroup $\Gamma_{m,n}$ is contained in $\Delta$. Consider the $m\times n$ rectangle and fill it with numbers $mn-mx-ny$ where we assume that the bottom left box has coordinates $(x,y)=(1,1)$. Here is an example for $(m,n)=(3,4)$:

\begin{center}
\begin{tikzpicture}
\draw (0,0)--(0,3)--(4,3)--(4,0)--(0,0);
\draw (0,1)--(4,1);
\draw (0,2)--(4,2);
\draw (0,3)--(4,3);
\draw (1,0)--(1,3);
\draw (2,0)--(2,3);
\draw (3,0)--(3,3);
\draw (0.5,0.5) node {$5$};
\draw (1.5,0.5) node {$2$};
\draw (2.5,0.5) node {$-1$};
\draw (3.5,0.5) node {$-4$};
\draw (0.5,1.5) node {$1$};
\draw (1.5,1.5) node {$-2$};
\draw (2.5,1.5) node {$-5$};
\draw (3.5,1.5) node {$-8$};
\draw (0.5,2.5) node {$-3$};
\draw (1.5,2.5) node {$-6$};
\draw (2.5,2.5) node {$-9$};
\draw (3.5,2.5) node {$-12$};
\draw [dotted] (0,3)--(4,0);
\end{tikzpicture}
\end{center}

One can check that all nonnegative integers in $\Z_{\ge 0}\setminus \Gamma_{m,n}$ appear exactly once in this rectangle in the cells strictly below the diagonal. It follows that the $(m,n)$-invariant subsets $\Delta$ containing $0$ correspond to Young diagrams in this rectangle contained strictly below the diagonal, or, equivalently, the lattice paths from northwest to southeast corner which stay below the diagonal. Here is an example for the subset $\Delta=\{0,{\bf 1},3,4,{\bf 5},6,\ldots\}$ where we mark in bold the numbers added to $\Gamma_{3,4}$:

\begin{center}
\begin{tikzpicture}
\draw (0,0)--(0,3)--(4,3)--(4,0)--(0,0);
\draw (0,1)--(4,1);
\draw (0,2)--(4,2);
\draw (0,3)--(4,3);
\draw (1,0)--(1,3);
\draw (2,0)--(2,3);
\draw (3,0)--(3,3);
\draw (0.5,0.5) node {$\mathbf{5}$};
\draw (1.5,0.5) node {$2$};
\draw (2.5,0.5) node {$-1$};
\draw (3.5,0.5) node {$-4$};
\draw (0.5,1.5) node {$\mathbf{1}$};
\draw (1.5,1.5) node {$-2$};
\draw (2.5,1.5) node {$-5$};
\draw (3.5,1.5) node {$-8$};
\draw (0.5,2.5) node {$-3$};
\draw (1.5,2.5) node {$-6$};
\draw (2.5,2.5) node {$-9$};
\draw (3.5,2.5) node {$-12$};
\draw [dotted] (0,3)--(4,0);
\draw [line width=5] (0,3)--(0,2)--(1,2)--(1,0)--(4,0);
\end{tikzpicture}
\end{center}
The number of such paths equals 
$$
c_{m,n}=\frac{(m+n-1)!}{m!n!}=\frac{1}{m+n}\binom{m+n}{n}.
$$
\end{proof}

\begin{theorem}[\cite{GMV2}]
The dimensions of the cells in the compactified Jacobians of $\{x^m=y^n\},GCD(m,n)=1$ are given by a recursive formula, which matches the recursion for Khovanov-Rozansky homology of torus knots in Theorem \ref{th: torus knot recursions} and in \cite{HM}.
\end{theorem}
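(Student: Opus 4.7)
The plan is to compare two generating functions directly. On one side, write
\[
p_{m,n}(q,t) \;:=\; \sum_{\Delta}\, q^{|\Delta \setminus \Gamma_{m,n}|}\, t^{\,2\dim C_\Delta},
\]
where $\Delta$ ranges over $(m,n)$-invariant subsets of $\Z_{\geq 0}$ containing $0$ (equivalently, cells $C_\Delta$ in the affine paving of $\overline{JC}(m,n)$ described in the excerpt). On the other side, take the polynomial $p(0^m,0^n)$ produced by the Hogancamp--Mellit recursion of Theorem \ref{th: torus knot recursions}. I would prove $p_{m,n}(q,t) = p(0^m, 0^n)$ (up to the specified change of variables) by exhibiting a recursion on cells that reproduces rules (1)--(5).

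First, I would reinterpret cells as lattice paths. By the lemma in the excerpt, $(m,n)$-invariant subsets $\Delta \supseteq \Gamma_{m,n}$ with $0 \in \Delta$ correspond to lattice paths $P$ from the NW to SE corner of the $m\times n$ rectangle staying below the diagonal, i.e.\ to $(m,n)$-Dyck paths. The statistic $|\Delta \setminus \Gamma_{m,n}|$ counts boxes above $P$, while the cell dimension $\dim C_\Delta$ is given by a Piontkowski--Gorsky--Mazin type formula counting certain arm--leg pairs of boxes under $P$ (see \cite{Piontkowski, GM1, GM2}).

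Second, I would set up the bijective encoding by a pair of binary sequences $(v(P), w(P))$ with $|v(P)|=|w(P)|$: reading the path step by step, record in $v$ the east/south decisions made strictly below the diagonal and in $w$ the mirrored decisions produced by the slope--$(n/m)$ staircase, so that the common weight $\ell=|v|=|w|$ equals the number of times $P$ touches the diagonal. The base cases $p(\emptyset,0^n),\ p(0^m,\emptyset)$ correspond to the degenerate strips, where the only Dyck path is the boundary one and the Jacobian is a point (so the generating function is the stated power of $(1+a)/(1-q)$ once the $\C[y]$-factor from Theorem \ref{th: perverse} is restored). Rules (2)--(4) are then the forced-step cases: whenever the word $(v1,w1)$, $(v0,w1)$ or $(v1,w0)$ determines the current step uniquely (because leaving the region or meeting the diagonal is forbidden), the recursion simply reads off that step, picking up the factor $(t^{\ell}+a)$ from rule (2) exactly when the forced step creates a new diagonal contact that contributes one unit to $\dim C_\Delta$ and one Hochschild generator.

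Third, I would verify the genuine branching rule (5). When both ends of the relevant subword are $0$, the path has a real binary choice at that step. The two branches correspond to either including or excluding the adjacent box from $\Delta$: exclusion shifts the dimension statistic down by $\ell$ (the current diagonal level), yielding the $t^{-\ell}p(1v,1w)$ term, while inclusion additionally enlarges $\Delta$ by one box, yielding the extra factor $q$ in $qt^{-\ell}p(0v,0w)$. The fact that both branches have even cell dimensions (so no signs appear) is forced by the affine paving, matching the parity statement of Theorem \ref{thm: parity}.

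The main obstacle is matching the precise dimension statistic to the exponents produced by the recursion. Concretely, I must show that the Piontkowski arm--leg count changes by exactly $-\ell$ (respectively $-\ell$ plus one box) under the local corner operation dictated by rule (5), uniformly in the rest of the path. I would handle this by induction on $m+n$ using the semigroup symmetry $\Gamma_{m,n} \leftrightarrow \Gamma_{n,m}$ together with an explicit case analysis on the box immediately adjacent to the diagonal corner being peeled, reducing to the already--computed two--strand cases in Example \ref{ex: hilb y2=x2k+1}.
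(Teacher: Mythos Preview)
The paper does not prove this theorem: it is stated with attribution to \cite{GMV2} and immediately followed by the sentence ``We refer to \cite{GMV2} for more details.'' There is therefore no in-paper argument to compare against, only the cited source.

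Your overall strategy---encoding cells by rational Dyck paths and exhibiting local moves on paths that realise the five rules of Theorem~\ref{th: torus knot recursions}---is broadly the approach of \cite{GMV2}. But as written the plan has two genuine gaps.

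First, the handling of the $a$-grading is inconsistent. The affine paving of the compactified Jacobian of an irreducible curve produces a two-variable generating function (area and cell dimension, or $q$ and $t$); there is no third grading on that side. The recursion of Theorem~\ref{th: torus knot recursions} computes the full triply-graded $\HHH$, so to make the comparison one must either specialise the recursion correctly (the Jacobian corresponds to the lowest $a$-row, after stripping the $\C[x]$-factor from Theorem~\ref{th: perverse}) or enlarge the geometric side to a nested scheme carrying an $a$-direction. You attempt both at once: you say the degenerate Jacobian is a point yet assign it the generating function $((1+a)/(1-q))^n$, and the appeal to a ``$\C[y]$-factor from Theorem~\ref{th: perverse}'' does not produce any $a$-dependence. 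Similarly, the claim under rule~(2) that a forced diagonal contact ``contributes one Hochschild generator'' has no meaning on the Jacobian side as you have set it up.

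Second, the combinatorial core is not actually present. The map from a Dyck path to a pair of binary words is not defined (``mirrored decisions produced by the slope-$(n/m)$ staircase'' is a description, not a construction), and the assertion that $\ell=|v|=|w|$ equals the number of diagonal contacts is neither proved nor, in fact, how the weight is interpreted in \cite{GMV2}, where the intermediate combinatorial objects are richer than a single path and the bookkeeping of $\ell$ is more delicate. Your final paragraph concedes that the crucial step---showing the cell-dimension statistic drops by exactly $\ell$ under the rule-(5) branching---remains to be done; the proposed ``induction on $m+n$ reducing to two-strand cases'' cannot work as stated, since the recursion of Theorem~\ref{th: torus knot recursions} does not decrease $m+n$ and the two-strand examples do not witness the general $\ell$-shift.
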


We refer to \cite{GMV2} for more details. Also, we have the following description of the cohomology ring of the compactified Jacobian in this case.

\begin{theorem}[\cite{OY,OY2}]
\label{th: OY}
The cohomology ring of the compactified Jacobian of $\{x^m=y^n\},GCD(m,n)=1$ is generated by tautological classes of degrees $2,4,\ldots,2n-2$ satisfying certain explicit relations \cite{OY2}.
\end{theorem}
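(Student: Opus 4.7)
The plan is to construct the tautological classes geometrically, prove generation via the affine paving and equivariant localization, and extract the explicit relations from a representation-theoretic identification of the cohomology.

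First, I would construct the classes. Using the description of $\overline{JC}$ as the moduli of $\CO_{C,0}$-submodules $M \subseteq \C[[t]]$ from Remark \ref{rem: jacobian}, there is a universal family $\mathcal{M} \subseteq \C[[t]] \otimes \CO_{\overline{JC}}$; truncating $\C[[t]]/\mathcal{M}$ at a high enough power of $t$ produces a bundle $\mathcal{E}$ of finite rank on $\overline{JC}$. Since $\gcd(m,n)=1$, the weight $(m,n)$ action of $\mathbb{G}_m$ on $C$ lifts to $\overline{JC}$ and grades $\mathcal{E}$ into $\mathbb{G}_m$-weight pieces; I would then define $\tau_{2k}$ for $k=1,\ldots,n-1$ as the degree-$2k$ components of the Chern character of $\mathcal{E}$ (equivalently, as elementary symmetric functions in the Chern roots in each graded piece), dropping the top class $\tau_{2n}$ that is forced to vanish by the $SL_n$-type normalization of the underlying lattices.

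Second, I would prove generation. The $\mathbb{G}_m$-action has isolated fixed points $M_\Delta$ indexed by $(m,n)$-invariant subsets $\Delta \subseteq \Z_{\geq 0}$ containing $0$; the Bia{\l}ynicki-Birula attracting cells give an affine paving of $\overline{JC}$ with $c_{m,n}$ cells (matching the rational Catalan count established above), so $H^*(\overline{JC})$ is a free abelian group of rank $c_{m,n}$ concentrated in even degrees. Equivariant cohomology then injects, via localization, into $\bigoplus_\Delta \C[u]$, and the restriction of $\tau_{2k}$ to $M_\Delta$ is an explicit symmetric function in the $\mathbb{G}_m$-characters of $\C[[t]]/M_\Delta$. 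A Vandermonde-style nondegeneracy shows the subring generated by $\tau_2,\ldots,\tau_{2n-2}$ separates fixed points in equivariant cohomology, and comparing Hilbert series forces this subring to equal all of $H^*(\overline{JC})$.

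Third, I would obtain the relations, which is the main obstacle. The key input is the identification of $H^*(\overline{JC})$ with a finite-dimensional simple module $L_{m/n}(\mathrm{triv})$ of the rational Cherednik algebra of type $A_{n-1}$ at parameter $m/n$, under which the geometric tautological classes match elementary symmetric polynomials in the Dunkl variables. The presentation of $L_{m/n}(\mathrm{triv})$ as a quotient of a polynomial ring by an explicit ideal is known from rational Catalan combinatorics (Haiman, Gordon, Gorsky-Mazin); transporting it along the isomorphism gives the required relations among the $\tau_{2k}$. The technically hardest step is rigorously matching the Cherednik action with the geometric tautological one. The Oblomkov-Yun strategy does this by embedding $\overline{JC}$ as a fiber of a global parabolic Hitchin fibration over $\P^1$, constructing a sheaf-theoretic action of the rational Cherednik algebra on the pushforward, and then propagating the presentation from the generic fiber to $\overline{JC}$ using Ng\^o's support theorem together with the properness and equidimensionality of the Hitchin map. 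A final Hilbert-series check, with total dimension $c_{m,n}$ and graded refinement by the $q,t$-Catalan numbers, verifies that the relations obtained are complete.
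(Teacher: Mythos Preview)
The paper does not give a proof of this theorem: it is stated as a result of Oblomkov--Yun and simply cited to \cite{OY,OY2}, with only a remark afterward about the expected relation to tautological classes in link homology. So there is no ``paper's own proof'' to compare against; your proposal is really a sketch of the argument in the cited references.

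As a sketch of \cite{OY,OY2}, your outline of the third step is broadly on target: the key mechanism is indeed the identification of $H^*(\overline{JC})$ with the finite-dimensional simple module $L_{m/n}(\mathrm{triv})$ for the rational Cherednik algebra, realized geometrically via a global Hitchin-type family and a perverse-sheaf action. Your second step, however, is where the sketch has a genuine gap. Saying that the $\tau_{2k}$ ``separate fixed points'' in equivariant cohomology by a Vandermonde argument, and then ``comparing Hilbert series'' to conclude generation, is circular: you do not know the Hilbert series of the subring generated by the $\tau_{2k}$ until you already know the relations, and separating fixed points only tells you that the restriction map from the subring to $\bigoplus_\Delta \C[u]$ is injective on something, not that the subring exhausts $H^*_{\mathbb{G}_m}$. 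In the actual Oblomkov--Yun argument, generation is not proved by a standalone localization trick; it comes out of the representation-theoretic identification itself, since the polynomial representation of the Cherednik algebra surjects onto $L_{m/n}(\mathrm{triv})$ and this surjection is what exhibits $H^*(\overline{JC})$ as a quotient of a polynomial ring in the tautological generators. In other words, steps two and three are not independent in the real proof: generation and relations are established together, via the same Cherednik-module structure.
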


\begin{remark}
It is natural to expect that the tautological classes in Theorem \ref{th: OY} are related to the ones in link homology, defined in Theorem \ref{th: tautological}. The precise relation between these is yet to be established.
\end{remark}

Next, we consider curves with more components. Consider the curve $\{x^{kn}=y^n\}$ which corresponds to the $(n,kn)$ torus link. The companion matrix of the polynomial $x^{kn} - y^n$ is conjugate in $GL(n, \C((t)))$, or even $SL_n(\C((t)))$, to the matrix 
$$
Y=\left(
\begin{matrix}
\zeta_1x^n & \cdots & 0\\
\vdots & \ddots & \vdots\\
0 & \cdots & \zeta_nx^n\\
\end{matrix}
\right),
$$
where $\zeta_1,\ldots,\zeta_n$ are distinct roots of unity of degree $n$.  There is an action of the lattice $\Z^{n-1}$ on $\Sp_{Y}$ by translations, as well as of the diagonal torus $(\C^*)^{n-1}$, both coming from the natural centralizer action on $\Sp_{Y}$. In \cite{Kivinen2} the second author compared the equivariant Borel-Moore homology of $\Sp_{Y}$ in this case with the homology of $(n,kn)$ torus link following Theorem \ref{th: full twist}. 

\begin{theorem}[\cite{Kivinen2}]
(a) One has
$$
H_{*, BM}(\Sp_Y)=\HHH^0(T(n,kn))\bigotimes_{\C[x_1,\ldots,x_n]/(\prod_i x_i-1)}\C[x_1^{\pm},\ldots,x_n^{\pm}]/\left(\prod_i x_i -1\right)
$$
where the action of $x_i$ on the left hand side is given by the lattice $\Z^{n-1}$, and on the right hand side by Theorem \ref{th: full twist}.

(b) Similarly,
$$
H^{T}_{*, BM}(\Sp_Y)=\HY^0(T(n,kn))\bigotimes_{\C[x_1,\ldots,x_n]/\prod_i x_i-1}\C[x_1^{\pm},\ldots,x_n^{\pm}]/\left(\prod_i x_i-1\right)
$$
where $T=(\C^*)^{n-1}$ and the equivariant parameters $y_1,\ldots,y_n$ with $\sum_i y_i=0$ match the ones appearing in the $y$-ification on the right. One can avoid the restrictions to the codimension 1 subtori by considering the $GL_n$-affine Springer fibers instead.
\end{theorem}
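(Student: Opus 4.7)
The strategy is to combine the algebraic description $\HY^0(T(n,kn)) = \CJ^k$ from Theorem \ref{th: full twist} with an explicit equivariant paving of $\Sp_Y$. Since $Y = \mathrm{diag}(\zeta_1 x^n, \ldots, \zeta_n x^n)$ has invertible and distinct leading parts, the $Y$-stability condition on a lattice $V \subset \C((x))^n$ reduces to the single condition $x^n V \subseteq V$. The centralizer of $Y$ in $GL_n(\C((x)))$ contains the diagonal maximal torus, inducing commuting actions on $\Sp_Y$ of the torus $T = (\C^*)^{n-1}$ and of the coroot lattice $\Z^{n-1}$ by translations, the latter action being free.

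Next I would produce a $T$-equivariant affine paving of $\Sp_Y$ via Bialynicki--Birula decomposition with respect to a generic one-parameter subgroup of $T$. The $T$-fixed points are the monomial lattices $V_{\mathbf{a}} := \bigoplus_i x^{a_i} \C[[x]] e_i$ with $\sum a_i = 0$, indexed by an affine translate of the coroot lattice, and the attracting cell at $V_{\mathbf{a}}$ is affine of a dimension computed by the Lusztig--Smelt/Hikita/Gorsky--Mazin combinatorial formula. Since each cell is attracting with positive weights, a standard argument (as in Goresky--Kottwitz--MacPherson) gives that $H^T_{*,BM}(\Sp_Y)$ is free over $H^*_T(\mathrm{pt}) = \C[y_1,\ldots,y_n]/(\sum y_i)$ with basis the fundamental classes of cell closures.

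To compare with $\CJ^k$, I would factor the $T$-fixed set as a finite fundamental domain crossed with the $\Z^{n-1}$-translates. The combinatorics of this fundamental domain---cell dimensions and the resulting bigraded Hilbert series over $\C[y]$---matches the standard monomial basis of $\CJ^k$ as a free $\C[y_1,\ldots,y_n]$-module given by Theorem \ref{th: full twist}(b). The free translation action of $\Z^{n-1}$ produces the $\C[x_1^\pm,\ldots,x_n^\pm]/(\prod x_i - 1)$ factor in the statement, with the $x_i$-action on the left given by translation by fundamental coweights and matching the algebraic $x_i$-action on $\CJ^k$ on the right. Part (a) then follows formally from (b) by specializing $y \to 0$, using $\CJ^k/(y)\CJ^k = J^k/(y)J^k = \HHH^0(T(n,kn))$ and the $y$-flatness assertion of Theorem \ref{th: full twist}(b).

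The hard part is matching module structures rather than merely graded vector spaces: the Hilbert-series comparison reduces in principle to Theorem \ref{th: full twist} together with the Lusztig--Smelt formula, but the compatibility of the two $x_i$-actions (lattice translation versus the algebraic module structure on $\CJ^k$) and of the two $y_i$-actions (equivariant Chern roots versus $y$-ification variables) requires careful tracking of $T$-weights at the fixed points and of Chern classes of tautological line bundles on $\Sp_Y$ against the algebraic generators of $\CJ$. This compatibility of module structures is the technical content of \cite{Kivinen2}, and is what allows the localization data on the geometric side to be promoted to an isomorphism of $\C[x_1^{\pm},\ldots,x_n^\pm,y_1,\ldots,y_n]/(\prod x_i - 1,\sum y_i)$-modules.
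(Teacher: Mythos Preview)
The paper does not prove this theorem; it is quoted from \cite{Kivinen2} without proof, and the reader is referred there and to related work for details. There is therefore no in-paper argument to compare your proposal against.

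Your overall strategy is reasonable and close in spirit to what one finds in \cite{Kivinen2}: use the centralizer torus to obtain a $T$-equivariant affine paving (equivariant formality in this unramified case goes back to \cite{GKM}), identify the translation action of the lattice $\Z^{n-1}$ with the $x_i$ and the equivariant parameters with the $y_i$, and then match against the description of $\CJ^k$ and $J^k$ from Theorem~\ref{th: full twist}. The caveat you flag at the end---that matching \emph{module} structures rather than Hilbert series is the real content---is exactly right, and is where the work in \cite{Kivinen2} lies.

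One sentence in your first paragraph is genuinely wrong, however. You write that ``the $Y$-stability condition on a lattice $V$ reduces to the single condition $x^n V\subseteq V$.'' This is vacuous: every $\C[[x]]$-lattice satisfies $x^nV\subseteq V$ automatically, so your claim would make $\Sp_Y$ equal to the entire affine Grassmannian. Writing $Y=x^nD$ with $D=\mathrm{diag}(\zeta_i)\in GL_n(\C)$, the condition $YV\subseteq V$ is equivalent to $DV\subseteq x^{-n}V$, which is a genuine constraint since $D$ need not preserve a general lattice. What \emph{is} true and useful in this equal-valuation (unramified) situation is that the $T$-fixed lattices are exactly the coordinate lattices $\bigoplus_i x^{a_i}\C[[x]]e_i$, and that the Bialynicki--Birula cells and their dimensions are explicitly computable; presumably this is what you intended, but the sentence as written should be revised.
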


For more details on the geometry of the affine Springer fibers corresponding to $(n,kn)$ torus links, and its connections to quantum groups we refer the reader to \cite{BBSV,Pablo,GKM}. The braid variety for  $(n,kn)$ torus links is discussed in \cite{HMW, Boalch}.

\subsection{Generalized affine Springer fibers}\label{sec: gasf}

In \cite{BFN,BFN2} Braverman, Finkelberg and Nakajima defined a remarkable family of algebras, called Coulomb branch algebras, associated to a reductive group $G$ and its representation $N$. In fact, they defined both a commutative algebra $\CA_{G,N}$ and its 
quantization $\CA^{\hbar}_{G,N}$. In \cite{GKMPurity} Goresky-Kottwitz-MacPherson defined a {\bf generalized affine Springer fiber} for $(G,N)$ parametrized by a vector in $N((t))$ and in \cite{HKW, GarKiv} Hilburn-Kamnitzer-Weekes and Garner-Kivinen proved (under some mild assumptions), that $\CA_{G,N}$ acts on its Borel-Moore homology while $\CA^{\hbar}_{G,N}$ acts on its loop rotation-equivariant Borel-Moore homology. 

It turns out that {\bf all} Hilbert schemes of points on singular curves are special cases of this construction. 

\begin{theorem}[\cite{GarKiv}]
\label{th: generalized Springer}
 (a) For any curve $C$ (not necessarily reduced!) the union $\sqcup_{k}\Hilb^k(C,0)$ is isomorphic to a certain generalized affine Springer fiber for $G=GL(n),N=\mathfrak{gl}(n)\oplus \C^n$.
 Here $n$, as above, denotes the degree of the projection of $C$ onto some line. 
 
 (b) There is an action of $\CA_{G,N}$  on $\bigoplus_{k}H^*(\Hilb^k(C,0))$
 
 (c) If the curve $C$ is quasi-homogeneous then there is a $\C^*$-action on $\Hilb^k(C,0)$ and $\CA^{\hbar}_{G,N}$ acts in the corresponding equivariant cohomology.
\end{theorem}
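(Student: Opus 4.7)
Part (a) is the geometric core of the statement, and the approach is to combine Lemma \ref{lem: companion matrix} with the definition of a generalized affine Springer fiber. After Weierstrass preparation along the chosen projection, $\mathcal{O}_{C,0}$ is identified with $V_{0} := \C[[y]]^{n}$; multiplication by $y$ is by the local parameter, and multiplication by $x$ is by the companion matrix $X \in \mathfrak{gl}_{n}(\C[[y]])$ of the resulting monic polynomial. Under this identification, ideals $I \subseteq \mathcal{O}_{C,0}$ are precisely the $\C[[y]]$-sublattices of $V_{0}$ preserved by $X$, and $\bigsqcup_{k} \Hilb^{k}(C, 0)$ corresponds to the variety of $X$-stable sublattices $V \subseteq V_{0}$ of finite colength $k = \dim_{\C}(V_{0}/V)$.

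The next step is to realize this variety as a generalized affine Springer fiber $\Sp^{N}_{\xi}$ for $G = GL(n)$, $N = \mathfrak{gl}(n) \oplus \C^{n}$, and a suitable $\xi = (X, v) \in N((y))$. A point of $\Sp^{N}_{\xi}$ is a coset $gG[[y]]$ with $g^{-1}\xi \in N[[y]]$; in the lattice picture $V = gV_{0}$, the $\mathfrak{gl}_{n}$ component of $\xi$ imposes $X \cdot V \subseteq V$, while the $\C^{n}$ component imposes a framing/containment condition on $V$ through $v$. Taking $X$ to be the companion matrix and $v$ to be an appropriate vector encoding the cyclic generator $1 \in \mathcal{O}_{C,0}$ (after a suitable duality or shift), the framing condition should carve out exactly the downward components $\{V \subseteq V_{0}\}$ of the affine Grassmannian, yielding the desired isomorphism $\Sp^{N}_{\xi} \cong \bigsqcup_{k \geq 0} \Hilb^{k}(C, 0)$. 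I would then upgrade this bijection to a scheme-theoretic isomorphism by exhibiting inverse flat families on the functor-of-points level.

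Parts (b) and (c) are formal consequences of the Coulomb branch machinery once (a) is in hand. For (b), the BFN variety of triples for $(G, N)$ admits a convolution action on the Borel-Moore homology of any generalized affine Springer fiber of type $(G, N)$ satisfying mild regularity hypotheses, by \cite{HKW, GarKiv}; one checks these hypotheses in the plane-curve setting and reads off the $\CA_{G,N}$-action. For (c), the quasi-homogeneous structure provides a $\C^{*}$-action on $\C^{2}$ with positive weights preserving $C$, which lifts to $\mathcal{O}_{C,0}$ and to each $\Hilb^{k}(C,0)$. Under the identification of (a), this $\C^{*}$-action matches (up to normalization) the loop rotation $y \mapsto s\,y$ on the affine Grassmannian, and the quantized BFN construction upgrades (b) to an action of $\CA^{\hbar}_{G,N}$ on the $\C^{*}$-equivariant Borel-Moore homology, with $\hbar$ the loop-rotation equivariant parameter.

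The main obstacle is the precise specification of $v$ in part (a), and the verification that the framing it provides selects exactly the downward components $\{V \subseteq V_{0}\}$ without spurious extra pieces, functorially in families. This is precisely why $N$ is enlarged from $\mathfrak{gl}_{n}$ to $\mathfrak{gl}_{n} \oplus \C^{n}$: for non-reduced $C$ the naive Springer fiber $\Sp_{X}$ is infinite-dimensional with infinitely many irreducible components, while the auxiliary $\C^{n}$-framing cuts out a finite-type slice in each colength that matches $\Hilb^{k}(C,0)$. Once this identification is nailed down, parts (b) and (c) reduce to routine checks of the standing hypotheses of the BFN/HKW/GK formalism.
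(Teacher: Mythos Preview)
The paper does not give its own proof of this theorem; it is stated as a result of \cite{GarKiv} and only the statement is recorded, so there is no in-paper argument to compare your proposal against. That said, your outline is essentially the approach of the cited reference: Lemma~\ref{lem: companion matrix} in the paper is there precisely to supply the companion matrix $X \in \mathfrak{gl}_n(\C[[y]])$ you invoke, and the identification of finite-colength ideals with $X$-stable sublattices of $V_0 = \C[[y]]^n$ is the intended geometric content. Your explanation of why one must pass from $N = \mathfrak{gl}_n$ to $N = \mathfrak{gl}_n \oplus \C^n$ is also on target: without the framing, the fiber over a non-regular-semisimple $X$ (in particular for non-reduced $C$) is not of finite type, and the vector $v$ serves to pin $V \subseteq V_0$ rather than allowing arbitrary lattices.

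The one place where your sketch is genuinely incomplete is exactly the point you flag: the precise choice of $v$ and the verification that the condition $g^{-1}\xi \in N[[y]]$ cuts out $\{V \subseteq V_0\}$ and nothing more. In the lattice picture, the condition on the $\C^n$-component reads $v \in V$, so one wants $v$ to be a $\C[[y]]$-generator of $V_0$ as an $X$-module (e.g.\ the cyclic vector $e_1$ for the companion matrix), which forces $V \supseteq \C[[y]][X]\cdot v = V_0$; to get $V \subseteq V_0$ instead one passes to the dual representation or equivalently works with quotient lattices. This is a bookkeeping point rather than a conceptual gap, and once it is sorted out parts (b) and (c) do follow formally from the BFN/HKW formalism as you say.
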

 
\begin{remark}
\label{rem: Kodera Nakajima}
 By the work of Kodera and Nakajima \cite{KN}, the quantized BFN algebra $\CA^{\hbar}_{G,N}$ for 
 $$
 (G,N)=(GL(n),\mathfrak{gl}(n)\oplus \C^n)
 $$
 is isomorphic to the {\bf spherical rational Cherednik algebra} of type $S_n$. We refer to \cite{GSV} for more details and the combinatorial description of the action of the rational Cherednik algebra on  $\bigoplus_{k}H^*_{\C^*}(\Hilb^k(C,0))$.
\end{remark}

\section{$\Hilb^n(\C^2)$ and link homology}

\subsection{Hilbert scheme and its properties}

In this lecture we outline some results and conjectures relating the Hilbert scheme of points on the plane to link homology. First, we recall the definition and main properties of the Hilbert scheme, and refer to the book \cite{NakBook} for all details. 

The Hilbert scheme is defined as
$$
\Hilb^n(\C^2):=\{I\subset \C[x,y]\ \text{ideal}:\dim \C[x,y]/I=n\}.
$$
It is a smooth algebraic symplectic variety of dimension $2n$. It is also a {\bf conical symplectic resolution} in the sense of \cite{BPW,BLPW}: the natural map $\pi:\Hilb^n(\C^2)\to S^n(C^2)$ which sends an ideal to its support is a resolution of singularities, and
$$
S^n\C^2=\Spec\ \C[x_1,\ldots,x_n,y_1,\ldots,y_n]^{S_n}
$$
is a singular affine Poisson variety in a way that is compatible with the map $\pi$. The two $\C^*$ actions on $\C^2$ which scale the coordinates lift to an action of $(\C^*)^2$ on the Hilbert scheme. Inside $(\C^*)^2$, we have two distinguished one-dimensional tori. 

\begin{itemize}
    \item The Hamiltonian torus $H = \{(h, h^{-1}) \mid h \in \C^*\}$ acts as $(x,y)\mapsto (hx,h^{-1}y)$. The reason it is called Hamiltonian is that it preserves the symplectic form.
    
    \item The scaling torus $S = \{(s, s) \mid s \in \C^*\}$ acts as $(x,y)\mapsto (sx,sy)$. Note that it scales the symplectic form by $s^2$. It contracts the whole $S^n\C^2$ to the origin.
\end{itemize}

\begin{remark}
Abusing the notation, let us denote by $s$ and $t$ the gradings on a $(\C^{*} \times \C^{*})$-equivariant sheaf on $\Hilb^n(\C^2)$ induced by the actions of $S$ and $T$, respectively. Likewise, we denote by $q$ and $t$ the gradings corresponding to the left and right, respectively, $\C^*$-factors in $\C^* \times \C^*$. Note that we have $h = qt^{-1}$ and $s = qt$. 
\end{remark}

The action of the torus $S$ is the reason why we have the adjective {\bf conical} in conical symplectic resolution, for it contracts the affinization $S^n(\C^2)$ of $\Hilb^n(\C^2)$ to a single point. The Hamiltonian torus $H$ has the following properties.

\begin{itemize}
    \item The attracting subvariety 
    $$
    L=\{p\in \Hilb^n(\C^2): \lim_{h\to 0}h.p\ \text{exists}\}
    $$
    is Lagrangian in $\Hilb^n(\C^2)$ and coincides with $\Hilb^n(\C^2,\C)=\pi^{-1}(\{y=0\})$ (since $\lim_{h\to 0}(hx,h^{-1}y)$ exists if and only if $y=0$).
    \item The $H$-fixed points are isolated and correspond to monomial ideals. The monomial ideals in $\Hilb^n(\C^2)$ are in correspondence with partitions of $n$, as follows. Given a Young diagram $\lambda$ of size $n$ (in French notation), the monomial ideal $I_{\lambda}$ is generated by all monomials outside $\lambda$.
    \begin{center}
        \begin{tikzpicture}
        \filldraw [lightgray] (0,7)--(0,4)--(1,4)--(1,3)--(2,3)--(2,2)--(5,2)--(5,1)--(6,1)--(6,0)--(9,0)--(9,7)--(0,7);
            \draw (0,0)--(9,0);
            \draw (0,1)--(9,1);
            \draw (0,2)--(9,2);
            \draw (0,3)--(9,3);
            \draw (0,4)--(9,4);
            \draw (0,5)--(9,5);
            \draw (0,6)--(9,6);
            \draw (0,7)--(9,7);
            \draw (0,0)--(0,7);
            \draw (1,0)--(1,7);
            \draw (2,0)--(2,7);
            \draw (3,0)--(3,7);
            \draw (4,0)--(4,7);
            \draw (5,0)--(5,7);
            \draw (6,0)--(6,7);
            \draw (7,0)--(7,7);
            \draw (8,0)--(8,7);
            \draw (9,0)--(9,7);
            \draw [line width=3] (0,7)--(0,4)--(1,4)--(1,3)--(2,3)--(2,2)--(5,2)--(5,1)--(6,1)--(6,0)--(9,0);
            
            \draw (0.5,4.5) node {$y^4$};
            \draw (0.5,3.5) node {$y^3$};
            \draw (1.5,3.5) node {$xy^3$};
            \draw (0.5,2.5) node {$y^2$};
            \draw (1.5,2.5) node {$xy^2$};
            \draw (2.5,2.5) node {$x^2y^2$};
             \draw (0.5,1.5) node {$y$};
             \draw (1.5,1.5) node {$xy$};
             \draw (2.5,1.5) node {$x^2y$};
             \draw (3.5,1.5) node {$x^3y$};
             \draw (4.5,1.5) node {$x^4y$};
            \draw (5.5,1.5) node {$x^5y$};
            \draw (0.5,0.5) node {$1$};
            \draw (1.5,0.5) node {$x$};
            \draw (2.5,0.5) node {$x^2$};
            \draw (3.5,0.5) node {$x^3$};
            \draw (4.5,0.5) node {$x^4$};
            \draw (5.5,0.5) node {$x^5$};
            \draw (6.5,0.5) node {$x^6$};
        \end{tikzpicture}
    \end{center}
    For example, the diagram $\lambda=(6,5,2,1)$ corresponds to
    the ideal $I_{\lambda}=(x^6,x^5y,x^2y^2,xy^3,y^4)$.
\end{itemize}

Note that there is a tautological rank $n$ bundle $\CT$ on the Hilbert scheme, whose fiber over an ideal $I$ given by $\C[x,y]/I$. We will need a line bundle
    $$
    \det \CT=\wedge^n \CT =: \CO(1).
    $$
    
     Finally, to connect to Section \ref{sec: gasf}, in the terminology of \cite{BFN,BFN2}, $\Hilb^n(\C^2)$ is {\bf both} the Higgs branch and the Coulomb branch for the 3d $N=4$ gauge theory corresponding to $(G,N)=(GL(n),\mathfrak{gl}(n)\oplus \C^n)$ (compare with Theorem \ref{th: generalized Springer} and Remark \ref{rem: Kodera Nakajima} above) and to the quiver
    \begin{center}
        \begin{tikzpicture}
            \draw (0,0) circle (0.5);
            \draw (1.5,-0.5)--(1.5,0.5)--(2.5,0.5)--(2.5,-0.5)--(1.5,-0.5);
            \draw (0,0) node {$n$};
            \draw (2,0) node {$1$};
            \draw[->] ({-1/(2*sqrt(2))},{1/(2*sqrt(2))})..controls ({-3/sqrt(2)},{3/sqrt(2)}) and ({-3/sqrt(2)},{-3/sqrt(2)})..({-1/(2*sqrt(2))},{-1/(2*sqrt(2))});
            \draw[->] (1.5,0)--(0.5,0);
        \end{tikzpicture}
    \end{center}
    of affine type $\widehat{A_1}$.

\subsection{Hilbert schemes and link homology}

We are ready to discuss the relation between the Hilbert scheme and link homology. The following conjecture was formulated in \cite{GNR} and mostly proved in a series of papers by Oblomkov and Rozansky \cite{OR1,OR2,OR3,OR4,OR5,OR6,OR7,OR8}.

\begin{conjecture}
\label{conj: GNR}
To a braid $\beta$ on $n$ strands one can associate a $\C^*\times \C^*$-equivariant coherent sheaf $\CF_{\beta}$ on $\Hilb^n(\C^2,\C)$ with the following properties:
\begin{itemize}
    \item[(a)] One has
    $$
    \HHH(\beta)\simeq H^*_{\C^*\times \C^*}\left(\Hilb^n(\C^2,\C),\CF_{\beta}\otimes \wedge^{\bullet}\CT^{\vee}\right)
    $$
    as triply graded vector spaces. The $q$ and $t$ gradings on the right hand side correspond to the $\C^*\times \C^*$ action, and the $a$-grading corresponds to the power of $\wedge^{\bullet}\CT^{\vee}$. The relation between the gradings $(a,q,t)$ on the right and $(A,Q,T)$ on the left is given by \eqref{eq: change of variables}.
    \item[(b)] The action of symmetric functions in $x_i$ on the left corresponds to the action of $$\C[x_1,\ldots,x_n]^{S_n}\subset \C[x_1,\ldots,x_n,y_1,\ldots,y_n]^{S_n}=H^0(\Hilb^n(\C^2),\CO)$$
    on the right hand side. In particular, the action of $x_i$ on $\HHH(\beta)$ determines the support of $\CF_{\beta}$.
    \item[(c)] In particular, (b) implies that for $\beta$ which closes up to a knot all $x_i$ act the same way, and $\CF_{\beta}$ is supported on $\Hilb^n(\C^2,0)\times \C\subset \Hilb^n(\C^2,\C)$.
    \item[(d)] Adding a full twist $\FT=(\sigma_1\cdots \sigma_{n-1})^{n}$ to a braid $\beta$ corresponds to tensoring the sheaf $\CF_{\beta}$ by $\CO(1)$.
    \item[(e)] The sheaf $\CF_{\beta}$ extends to a sheaf $\widetilde{\CF_{\beta}}$ on the whole $\Hilb^n(\C^2)$ which corresponds to the $y$-ified homology $\HY(\beta)$.
\end{itemize}
\end{conjecture}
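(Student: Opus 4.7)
The plan is to construct a functor $\Phi: \CK_n \to D^b_{\C^*\times\C^*}(\Hilb^n(\C^2,\C))$ sending Rouquier complexes to complexes of equivariant coherent sheaves, and set $\CF_\beta := \Phi(T_\beta)$. The central tool will be the Procesi bundle $\CP$ of rank $n!$ on $\Hilb^n(\C^2)$, together with Haiman's theorem identifying $S_n$-equivariant sheaves on $(\C^2)^n$ with sheaves on $\Hilb^n(\C^2)$. The bimodule structure intrinsic to $\CP \otimes \CP^\vee$ supplies exactly the data required to convert an $R$-bimodule into a sheaf: symmetric polynomials in $x_i$ act via $H^0(\Hilb^n,\CO)$, while the bimodule structure over non-symmetric polynomials is encoded in the $S_n$-representation decomposition of $\CP$. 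Concretely, one sends each generator $B_i$ to an explicit sheaf constructed on a flag Hilbert scheme, following the matrix-factorization model of Oblomkov-Rozansky, and $T_i, T_i^{-1}$ to the corresponding cones of Lemma \ref{lem: problem 2}.

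Having defined $\Phi$ on generators, the next step is to verify that it respects the Soergel quadratic and braid relations \eqref{eq: Soergel quadratic}, \eqref{eq: Soergel braid} and the Rouquier relations of Theorem \ref{th: rouquier} up to canonical homotopies, giving a well-defined object $\CF_\beta$ depending only on $\beta$. For part (a), the identity to establish is
\begin{equation*}
R\Gamma_{\C^*\times\C^*}\bigl(\Hilb^n(\C^2,\C),\, \CF_\beta \otimes \wedge^i \CT^\vee\bigr) \cong \HH^i(T_\beta).
\end{equation*}
The role of $\wedge^\bullet \CT^\vee$ is to serve as the geometric incarnation of $\Ext^\bullet_{R\text{-bimod}}(R,-)$: on the product $(\C^2)^n \times (\C^2)^n$ the Koszul resolution of the diagonal, after symmetrization and passage via the Procesi bundle to $\Hilb^n$, produces the wedge powers of $\CT^\vee$. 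The $a$-grading thus matches the wedge degree, and the $(q,t)$-grading to the $(\C^*)^2$-weights via the change of variables \eqref{eq: change of variables}.

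Parts (b) and (c) follow formally from the construction: the two $R$-actions on $M \in \SBim_n$ induce the same action of $\C[x_i]^{S_n}$, which under $\Phi$ corresponds to multiplication by functions in $H^0(\Hilb^n(\C^2),\CO)$, while for a knot closure the homotopies of Lemma \ref{lem: homotopy crossing} identify all $x_i$ up to homotopy, forcing the scheme-theoretic support to lie in $\Hilb^n(\C^2,0)\times \C$. For part (d), one computes $\Phi(T_{\FT})$ directly and shows that tensoring by the full-twist Rouquier complex corresponds at the bimodule level to twisting by $\det(R\otimes_{R^{S_n}}R)$, which under the Procesi dictionary is precisely $\CO(1)$. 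For part (e), the $y$-ified differential $D = d + \sum_i \xi_i y_i$ of Theorem \ref{th: tautological} is matched with the deformation of $\CF_\beta$ over the second $\C$-factor in $\Hilb^n(\C^2)$, the variables $y_i$ becoming literal coordinates along the $y$-direction.

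The main obstacle will be the construction of $\Phi$ itself, specifically the verification of braid invariance on the geometric side. This is delicate because the natural candidates for $\Phi(B_i)$ live most naturally on flag Hilbert schemes and one must check that the various convolutions and homotopies on that side faithfully model the Rouquier complex after pushforward; this is exactly where the Oblomkov-Rozansky matrix-factorization machinery is deployed. A secondary challenge is the precise bookkeeping of the three gradings: matching the wedge-power grading to the Hochschild degree $A$, the equivariant weights to $Q$ and $T$ via \eqref{eq: change of variables}, and ensuring that the Euler characteristic of $\CF_\beta \otimes \wedge^\bullet \CT^\vee$ recovers the HOMFLY-PT polynomial (providing a nontrivial consistency check via the decategorification of Theorem \ref{thm: parity}).
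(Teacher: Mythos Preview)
The statement you are addressing is labeled a \textbf{Conjecture} in the paper, and the paper does not prove it. Immediately before stating it, the authors write that it ``was formulated in \cite{GNR} and mostly proved in a series of papers by Oblomkov and Rozansky,'' and Section~7.5 is entitled ``Approaches to the proof'' --- it surveys three strategies (the Oblomkov--Rozansky matrix-factorization construction, the annular/derived-trace approach of \cite{GHW,GW}, and the graded-algebra approach via powers of the full twist from \cite{GNR}) without carrying any of them out. So there is no ``paper's own proof'' to compare your attempt against.

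That said, your proposal is not a proof either; it is a strategic outline, and you are candid about this when you write that ``the main obstacle will be the construction of $\Phi$ itself, specifically the verification of braid invariance on the geometric side.'' That obstacle is the entire content of the conjecture: everything in parts (a)--(e) follows relatively formally once a functor $\Phi$ with the right properties exists, and the difficulty is precisely building it. Your sketch blends ingredients from the first and third approaches in Section~7.5 (Procesi bundle, flag Hilbert schemes, Oblomkov--Rozansky machinery), but it does not add any step that would close the gap. In particular, the sentence ``one sends each generator $B_i$ to an explicit sheaf constructed on a flag Hilbert scheme'' and the later appeal to ``the Oblomkov--Rozansky matrix-factorization machinery'' are exactly the black boxes whose unpacking constitutes the papers \cite{OR1}--\cite{OR8}; invoking them is citing the result, not proving it.

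If your goal was to summarize what a proof would have to do, your outline is broadly accurate and aligns with the paper's own discussion in Section~7.5. If your goal was to supply an independent argument, there is a genuine gap: nothing in your text constructs $\Phi$ or verifies the relations, and the paper itself treats this as open (modulo the Oblomkov--Rozansky papers, which use a substantially different and much more elaborate setup than what you describe).
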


\begin{example}
The torus braid $\beta=T(n,kn+1)$ corresponds to the line bundle $\CO(k)$ on the punctual Hilbert scheme $\Hilb^n(\C^2,0)$. In particular, $T(2,3)$ corresponds to $\CO(1)$ on $\Hilb^2(\C^2,0)=\P^1$,  $T(2,2k+1)$ corresponds to $\CO(k)$ on $\Hilb^2(\C^2,0)$, and it is easy to see that for $k>0$ the bigraded space of sections of $\CO(k)$ on $\P^1$ matches $\HHH^0(T(2,2k+1))$ computed in Example \ref{ex: two strand homology} up to regrading.

Furthermore, $T(3,4)$ corresponds to $\CO(1)$ on $\Hilb^3(\C^2,0)$ which is isomorphic to the (projective) cone over twisted cubic in $\P^3$. One can check that 
$$
\dim H^0(\Hilb^3(\C^2,0),\CO(1))=5.
$$
More generally, by \cite{HaimanCatalan} one can compute $H^0(\Hilb^n(\C^2,0),\CO(1))$ and match it with the $q,t$-Catalan numbers \cite{GaHa}.  
\end{example}

For general torus braids $\beta=T(m,n)$ the description of $\CF_{\beta}$ is more complicated. Consider the {\bf flag Hilbert scheme} 
$$
\FHilb^n(\C^2,0):=\{\C[x,y]\supset I_1\supset\ldots\supset I_n\},
$$
where all $I_k$ are ideals in $\C[x,y]$ of codimension $k$ supported at the origin. It is a very singular space, which can be equipped with the structure of a virtual complete intersection (or of a dg scheme) \cite{GNR}. It comes with the projection
$$
p:\FHilb^n(\C^2,0)\to \Hilb^n(\C^2,0),\ (I_1,\ldots,I_n)\mapsto I_n
$$
and a collection of line bundles $\CL_k=I_{k-1}/I_k$. 

\begin{theorem}[\cite{GN,GNR,OR3}]
\label{thm: refined}
Suppose that $GCD(m,n)=1$. The sheaf $\CF_{m,n}$ corresponding to $\beta=T(m,n)$ is given by
$$
\CF_{m,n}=p_*(\CL_1^{a_1}\cdots \CL_n^{a_n}),
$$
where $a_k=\lceil\frac{km}{n}\rceil-\lceil\frac{(k-1)m}{n}\rceil$.
\end{theorem}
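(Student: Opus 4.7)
The plan is an induction on $m$ using the operation $m \mapsto m+n$, anchored at a base case $0 < m < n$. The key observation is that both sides of the claimed equality transform compatibly under the addition of a full twist.

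First I would record the following identity on the flag Hilbert scheme:
$$p^*\CO(1) = p^* \det \CT \cong \CL_1 \otimes \CL_2 \otimes \cdots \otimes \CL_n.$$
Indeed, restricting the tautological rank-$n$ bundle $\CT$ to a point $(I_1 \supset I_2 \supset \cdots \supset I_n) \in \FHilb^n(\C^2,0)$ gives $\C[x,y]/I_n$, which admits a filtration with successive quotients $I_{k-1}/I_k = \CL_k$; taking determinants yields the claim. Combined with the projection formula, this gives
$$p_*\bigl(\CL_1^{a_1+1}\cdots\CL_n^{a_n+1}\bigr) \cong p_*\bigl(\CL_1^{a_1}\cdots\CL_n^{a_n}\bigr) \otimes \CO(1).$$

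Next I would verify the arithmetic identity $a_k^{(m+n,n)} = a_k^{(m,n)} + 1$ for every $k$, which follows from
$$\lceil k(m+n)/n\rceil - \lceil (k-1)(m+n)/n\rceil = \bigl(\lceil km/n\rceil + k\bigr) - \bigl(\lceil (k-1)m/n\rceil + k-1\bigr) = a_k^{(m,n)} + 1.$$
Combining this with property (d) of Conjecture \ref{conj: GNR}, that adding a full twist to $\beta$ tensors $\CF_\beta$ by $\CO(1)$, together with the projection-formula identity above, I obtain the inductive step: the formula for $(m,n)$ implies the formula for $(m+n,n)$. The induction reduces the theorem to the range $0 < m < n$ with $\gcd(m,n)=1$.

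For the base case I would appeal to the elliptic Hall algebra action on $\bigoplus_n K^{\C^*\times\C^*}(\Hilb^n(\C^2))$ of Burban-Schiffmann-Vasserot, which provides operators $P_{m,n}$ indexed by coprime pairs. Gorsky and Negut \cite{GN} identify $P_{m,n}$ applied to the vacuum with the pushforward $p_*(\CL_1^{a_1}\cdots\CL_n^{a_n})$. On the braid side, the Rouquier complex $T_{\beta_{m,n}}$ is identified with the categorified action of $P_{m,n}$ on the unit under the correspondence of Conjecture \ref{conj: GNR}. An alternative route is to build both sides inductively: on the braid side using Hogancamp's projector $K_n$ and the Mellit recursion (Theorem \ref{th: torus knot recursions}), and on the sheaf side using the fact that $K_n$ corresponds to $p_*\CO_{\FHilb^n(\C^2,0)}$, with each added positive crossing tensoring by an appropriate $\CL_k$; matching the two recursions term-by-term, with parity arguments to kill all differentials, completes the base case.

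The main obstacle is precisely this categorical identification of Rouquier complexes for torus braids with flag-Hilbert-scheme line-bundle twists, which is the heart of Conjecture \ref{conj: GNR} in the torus case. Completing it requires either the full categorified elliptic Hall algebra machinery of \cite{GN, GNR} or the direct 2-functor construction of Oblomkov-Rozansky \cite{OR3}. In particular, one must trace through these constructions the specific sequence $a_k = \lceil km/n\rceil - \lceil (k-1)m/n\rceil$, which encodes the staircase lattice path from $(0,0)$ to $(n,m)$ closest to the diagonal, and show that it arises precisely from the ordering of crossings in the minimal positive braid word for $\beta_{m,n}$ via the GNR dictionary between braid strands and the successive ideals $I_k$ in the flag.
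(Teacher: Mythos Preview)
The paper does not prove this theorem; it states the result with attribution to \cite{GN, GNR, OR3} and immediately refers the reader to those sources. There is thus no proof in the paper to compare against.

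Your outline is sound as far as it goes. The identity $p^*\CO(1) \cong \CL_1 \otimes \cdots \otimes \CL_n$ and the arithmetic $a_k^{(m+n,n)} = a_k^{(m,n)} + 1$ are correct, and together with property (d) of Conjecture~\ref{conj: GNR} they give the inductive step $m \mapsto m+n$. Two caveats. First, property (d) is itself part of the conjectural package; invoking it is only legitimate once the assignment $\beta \mapsto \CF_\beta$ and its full-twist compatibility have been constructed independently, as in the Oblomkov--Rozansky framework. Second, the reduction to $0 < m < n$ does not meaningfully simplify the problem: that range still contains $\varphi(n)$ cases for each $n$, and your own final paragraphs concede that the base case requires the full machinery of the cited references. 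Note also that \cite{GN} is a K-theoretic statement rather than a sheaf-level one; the identification at the level of sheaves genuinely needs \cite{GNR} or \cite{OR3}. In effect your proposal arrives at the same place as the paper: the theorem holds by the cited works, with the full-twist observation serving as a useful consistency check rather than an independent argument.
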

Note that here we consider $p_*$ as the derived pushforward for the morphism of dg schemes. We refer the reader to \cite{GN,GNR,OR3} for more details.

\begin{remark}
One can consider the sheaves $p_*(\CL_1^{a_1}\cdots \CL_n^{a_n})$ for arbitrary exponents $a_1,\ldots,a_n$. These correspond to the braids $$\beta(a_1,\ldots,a_n):=\ell_1^{a_1}\cdots \ell_n^{a_n}\sigma_1\cdots \sigma_n
$$ 
where 
$$
\ell_i=\sigma_{i-1}\cdots \sigma_1\sigma_1\cdots \sigma_{i-1}
$$
are Jucys-Murphy braids. We refer to \cite{GNR,OR3} for more details on the relation between such sheaves and braids (called Coxeter braids in \cite{OR3}) and to \cite{GHSR,BHMPS} for the corresponding combinatorial results and identities. 
\end{remark}

\subsection{Procesi bundle and the identity braid}

Next, we would like to associate sheaves to the identity braid and the powers of the full twist. For this, we will need the following constructions of Haiman \cite{Haiman,Haiman2}. Let $X_n$ denote the reduced fiber product of $\Hilb^n(\C^2)$ with $(\C^2)^n$ over $S^n(\C^2)$:
\begin{center}
\begin{tikzcd}
X_n\arrow{r}\arrow{d}{q} & (\C^2)^n \arrow{d}\\
\Hilb^n(\C^2) \arrow{r}{\pi} & S^n(\C^2)\\
\end{tikzcd}
\end{center}

\begin{theorem}[\cite{Haiman}]
\label{th: isospectral}
The space $X_n$ satisfies the following properties:
\begin{itemize}
    \item[(a)] $X_n$ is the blowup of $(\C^2)^n$ along the the union of all diagonals $\{P_i=P_j\}$.
    \item[(b)] $X_n=\Proj \bigoplus_{k=0}^{\infty}J^k$ where
    $$
    J=\cap_{i\neq j} (x_i-x_j,y_i-y_j)
    $$
    is the ideal defining the union of diagonals.
    Also, $J^k$ is free over $\C[y_1,\ldots,y_n]$.
    \item[(c)] $q_*\CO_{X_n}=\CP$ is a vector bundle on $\Hilb^n(\C^2)$ of rank $n!$. It is called the {\bf Procesi bundle.}
    \item[(d)] We have the following diagram:
    \begin{center}
        \begin{tikzcd}
            \FHilb^n(\C^2) \arrow[bend right]{ddr}{p} \arrow{dr} \arrow[bend left]{drr}{r}& & \\
            & X_n \arrow{d}{q}\arrow{r} & (\C^2)^n \arrow{d}\\
            & \Hilb^n(\C^2) \arrow{r}{\pi} & S^n(C^2)\\
        \end{tikzcd}
    \end{center}
    where the maps $p$ and $q$ are as above, and $r$ sends a flag of ideals $(I_1,\ldots,I_n)$ to the ordered collection of supports of $I_{k-1}/I_k$. Then
    $$
    p_*\CO_{\FHilb^n(\C^2)}=q_*\CO_{X_n}=\CP.
    $$
\end{itemize}
\end{theorem}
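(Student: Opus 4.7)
The plan is to handle the four parts in order, with the understanding that (b) and (c) rest on Haiman's polygraph and $n!$ theorems, which together constitute the main technical obstacle.

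For part (a), I would begin from the classical fact that the Hilbert--Chow morphism $\pi: \Hilb^n(\C^2) \to S^n\C^2$ realizes $\Hilb^n(\C^2)$ as the blowup of $S^n\C^2$ along the image of the big diagonal. The quotient map $(\C^2)^n \to S^n\C^2$ is finite and flat away from the discriminant, and the scheme-theoretic preimage of the big diagonal ideal is precisely the diagonal ideal $J = \bigcap_{i \neq j}(x_i - x_j, y_i - y_j)$. Blowups commute with flat base change, and over the complement of the diagonals this already identifies $X_n$ with $\mathrm{Bl}_J(\C^2)^n$. To extend across the diagonals one invokes the universal property of blowups applied to the reduced fiber product, which forces the natural map $\mathrm{Bl}_J(\C^2)^n \to X_n$ to be an isomorphism.

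Part (b) then follows formally: by (a) and the standard Rees-algebra presentation of a blowup we have $X_n = \Proj \bigoplus_{k \geq 0} J^k$. The freeness of each $J^k$ as a $\C[y_1,\ldots,y_n]$-module is the substantive statement; this is exactly Haiman's polygraph theorem, which asserts that the coordinate ring of the nested polygraph variety $Z(n,\ell)$ is free over $\C[y_1,\ldots,y_n]$. The proof proceeds by induction on $n$ and $\ell$ together with a carefully chosen term order, and the powers $J^k$ appear as the graded pieces of one of these polygraph coordinate rings.

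For part (c), this is the $n!$-theorem. I would argue in three steps. First, over the open locus of configurations with distinct points, $q$ is étale of degree $n!$, so $q_*\CO_{X_n}$ has generic rank $n!$. Second, to promote this to a vector bundle it suffices to show $q$ is flat; since $\Hilb^n(\C^2)$ is smooth, miracle flatness reduces this to showing $X_n$ is Cohen--Macaulay with equidimensional fibers of the expected dimension $0$. The Cohen--Macaulay property of $X_n$ is extracted from the polygraph theorem of (b): freeness over $\C[y_1,\ldots,y_n]$ together with the analogous freeness over $\C[x_1,\ldots,x_n]$ supplies the required depth estimate. Third, flatness plus generic rank $n!$ gives $q_*\CO_{X_n}$ locally free of rank $n!$, and we set $\CP := q_*\CO_{X_n}$. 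The crux, and by far the hardest step, is Cohen--Macaulayness, and every known proof passes through the polygraph machinery.

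For part (d), define $r:\FHilb^n(\C^2) \to (\C^2)^n$ by sending an ideal flag $(I_1 \supset \cdots \supset I_n)$ to the tuple of supports of the length-one quotients $I_{k-1}/I_k$. Composing $r$ with the quotient $(\C^2)^n \to S^n\C^2$ reproduces $\pi \circ p$, so by the universal property of the (reduced) fiber product $r$ factors uniquely through a morphism $\FHilb^n(\C^2) \to X_n$ lifting $p$. To obtain the identification $p_*\CO_{\FHilb^n(\C^2)} = q_*\CO_{X_n}$ I would verify that the induced map is proper, birational and has geometrically connected fibers, so that by Zariski's main theorem combined with normality of $X_n$ (a consequence of its Cohen--Macaulay property from (c) together with the codimension-two estimate on the singular locus) the pushforward of the structure sheaf agrees on both sides. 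The main obstacle, again, is the Cohen--Macaulay/normality input, which is furnished by the polygraph theorem invoked above.
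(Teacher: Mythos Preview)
The paper does not supply a proof of this theorem: it is quoted as a result of Haiman, with the remark immediately following the statement that part (d) is implicit in \cite{Haiman} and worked out in detail in \cite{GNR}. So there is no ``paper's own proof'' to compare against; your sketch is being measured against Haiman's original argument.

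Your overall architecture is correct and matches Haiman's: the polygraph theorem is the load-bearing input, freeness of $J^k$ over $\C[y_1,\ldots,y_n]$ yields Cohen--Macaulayness of $X_n$, and then miracle flatness over the smooth base $\Hilb^n(\C^2)$ gives the vector bundle statement (c). Two points deserve tightening. First, in (a) your universal-property step is not self-contained: one does get a map from $\mathrm{Bl}_J(\C^2)^n$ to the scheme-theoretic fiber product, but identifying its image with the \emph{reduced} fiber product $X_n$ already requires knowing that $\mathrm{Bl}_J(\C^2)^n$ is reduced (indeed normal), which in Haiman's treatment is deduced from the polygraph theorem. So the logical order is really (b) $\Rightarrow$ (a), not the other way around; your sketch of (a) implicitly borrows from (b). Second, in (d) the flag Hilbert scheme $\FHilb^n(\C^2)$ is, as the paper notes elsewhere, a highly singular (virtually complete intersection / dg) space, and $p_*$ is taken in the derived sense; the Zariski-main-theorem argument you outline would need $\FHilb^n$ to be a reduced variety with the map to $X_n$ birational with connected fibers, none of which is immediate. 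The actual argument in \cite{GNR} proceeds differently, via an explicit analysis of the derived pushforward.
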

Part (d) of the theorem is implicit in \cite{Haiman} and proved in \cite{GNR} in more detail.

Finally, the sheaf $\CF_1$ corresponding to the identity braid coincides with $p_*\CO_{\FHilb^n(\C^2,\C)}$ (similarly to Theorem \ref{thm: refined}) and hence is isomorphic to the Procesi bundle $\CP$ restricted to $\Hilb^n(\C^2,\C)$. Similarly, $\beta=T(n,kn)$ correspond to the vector bundle
$$
\CF_{n,kn}=\CP\otimes \CO(k).
$$
restricted to $\Hilb^n(\C^2,\C)$. Note that for $k\ge 0$ by the projection formula we have
$$
H^0(\Hilb^n(\C^2,\C),\CP\otimes \CO(k))=H^0(X_n(\C^2,\C),\CO(k))=J^k/(y)J^k
$$
since $H^0(X_n,\CO(k))=J^k$ is free over $\C[y_1,\ldots,y_n]$.
This agrees with the Khovanov-Rozansky homology of the full twist by Theorem \ref{th: full twist}.

\subsection{$y$-ification and symmetry}

Many interesting structures in link homology become transparent when considering $\Hilb^n(\C^2)$. As we mentioned above, $y$-ification corresponds to the extension of the sheaf $\CF_{\beta}$ from $\Hilb^n(\C^2,\C)$ to a sheaf $\widetilde{\CF_{\beta}}$ on the whole Hilbert scheme of points. More precisely, we have the following $y$-ified version of Conjecture \ref{conj: GNR}:

\begin{conjecture}
\label{conj: yified GNR}
To a braid $\beta$ on $n$ strands one can associate a $\C^*\times \C^*$-equivariant coherent sheaf $\widetilde{\CF_{\beta}}$ on $\Hilb^n(\C^2)$ with the following properties:
\begin{itemize}
    \item[(a)] One has
    $$
    \HY(\beta)\simeq H^*_{\C^*\times \C^*}\left(\Hilb^n(\C^2),\widetilde{\CF_{\beta}}\otimes \wedge^{\bullet}\CT^{\vee}\right)
    $$
    as triply graded vector spaces (with the same grading conventions as in Conjecture \ref{conj: GNR}).
    \item[(b)] The action of symmetric functions in $x_i$ and $y_u$ on the left corresponds to the action of $$ \C[x_1,\ldots,x_n,y_1,\ldots,y_n]^{S_n}=H^0(\Hilb^n(\C^2),\CO)$$
    on the right hand side. 
    \item[(c)] In particular, (b) implies that for $\beta$ which closes up to a knot the sheaf $\widetilde{\CF_{\beta}}$ is supported on $\Hilb^n(\C^2,0)\times \C^2\subset \Hilb^n(\C^2)$, and agrees with the trivial extension of $\CF_{\beta}$
    \item[(d)] Adding a full twist $\FT=(\sigma_1\cdots \sigma_{n-1})^{n}$ to a braid $\beta$ corresponds to tensoring the sheaf $\widetilde{\CF_{\beta}}$ by $\CO(1)$.
    \item[(e)] The restriction of  $\widetilde{\CF_{\beta}}$ to $\Hilb^n(\C^2,\C)$ agrees with $\CF_{\beta}$.
\end{itemize}
\end{conjecture}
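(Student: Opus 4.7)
The plan is to promote each sheaf $\CF_\beta$ from Conjecture \ref{conj: GNR} from $\Hilb^n(\C^2,\C)$ to a coherent sheaf $\widetilde{\CF_\beta}$ on all of $\Hilb^n(\C^2)$, using the algebraic structure of $y$-ification as a guide. The key observation is that the extra variables $y_1,\ldots,y_n$ on the bimodule side are precisely the coordinates dual to the $x_i$ that distinguish $\Hilb^n(\C^2)$ from its restriction to the locus $\{y=0\}$. Correspondingly, the dg algebra $\CA$ action on $T_\beta$ from Theorem \ref{th: A acts} and the deformed differential $D = d + \sum \xi_i y_i$ should be the shadow of a natural extension of $\CF_\beta$ off $\Hilb^n(\C^2,\C)$.

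The first step is to verify the conjecture on the two anchoring cases. For the identity braid one takes $\widetilde{\CF_1} = \CP$: by Theorem \ref{th: isospectral}(c,d) we have $H^0(\Hilb^n(\C^2), \CP) = H^0(X_n, \CO_{X_n}) = \C[x_1,\ldots,x_n,y_1,\ldots,y_n]$, which matches $\HY^{A=0}(\mathrm{unlink}_n)$, with the fermionic variables $\theta_i$ arising from $\wedge^\bullet \CT^\vee$. For the $k$-th full twist, combining Theorem \ref{th: full twist}(a), which identifies $\HY^{A=0}(T(n,kn)) \simeq J^k$, with Theorem \ref{th: isospectral}(b), which gives $X_n = \Proj \bigoplus_k J^k$ and hence $H^0(X_n, \CO(k)) = J^k$, shows that $\widetilde{\CF_{\FT^k}} := \CP \otimes \CO(k)$ has the correct global sections. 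This simultaneously establishes property (d) in the base case.

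For a general braid $\beta$, one should mimic the Oblomkov-Rozansky machinery that produces $\CF_\beta$ (via matrix factorizations or convolution kernels on products of Hilbert schemes), but work throughout on $\Hilb^n(\C^2)$ rather than $\Hilb^n(\C^2,\C)$. The coproduct on $\CA$ from Lemma \ref{lem: A coproduct} should lift to a compatibility of the convolution product with the $y$-direction, and restriction to $\{y=0\}$ recovers $\CF_\beta$, yielding property (e). Property (b) then reduces to the identification $H^0(\Hilb^n(\C^2), \CO) = \C[x,y]^{S_n}$, and (c) follows because for knot-closing braids all $x_i$ (respectively all $y_i$) act identically on $\HY(\beta)$, forcing set-theoretic support inside $\Hilb^n(\C^2,0) \times \C^2$.

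The principal technical obstacle is property (a) for arbitrary $\beta$. Two difficulties arise. First, invariance of the right-hand side under Markov moves must be re-verified in the $y$-ified setting; positive and negative stabilization differ by an $a$-grading shift on the link homology side, and the matching sheaf-theoretic operations (tensoring by a line bundle, pushing forward along a forgetful map between Hilbert schemes of different sizes) need to be pinpointed so that they interact correctly with the extra $y$-variables. Second, the $\mathfrak{sl}_2$ symmetry of $\HY(\beta)$ from Theorem \ref{th: tautological} should correspond geometrically to the involution on $\Hilb^n(\C^2)$ induced by swapping the two coordinates of $\C^2$, and any construction that does not manifestly preserve this symmetry will be very hard to match against $\HY(\beta)$ directly. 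A successful approach will likely require categorifying the full $\CA$-module structure on $T_\beta$ into a structure on a matrix factorization with an additional $y$-direction, together with a careful analysis of the associated graded for the $y$-adic filtration, which is precisely where one should recover the statement of Conjecture \ref{conj: GNR} as a degeneration of the $y$-ified statement.
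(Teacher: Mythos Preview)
The statement you are addressing is labeled a \emph{Conjecture} in the paper, and the paper does not prove it. What follows the conjecture in the text is supporting evidence, not a proof: the identity braid corresponds to the Procesi bundle $\CP$, the $k$-th power of the full twist to $\CP\otimes\CO(k)$ (with sections matching $\HY^0(T(n,kn))$ via Theorems \ref{th: full twist} and \ref{th: isospectral}), and the conjectural $\mathfrak{sl}_2$-action is explained geometrically by the $SL(2)$-action on $\Hilb^n(\C^2)$. Section~7.5 then sketches three approaches to the closely related Conjecture \ref{conj: GNR}, none of which is carried through for the $y$-ified statement.

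Your proposal is candid about its own status: it is a plan, not a proof. The anchoring cases you verify (identity and full twist) are exactly the evidence the paper presents, and your suggestion to lift the Oblomkov--Rozansky machinery from $\Hilb^n(\C^2,\C)$ to the full $\Hilb^n(\C^2)$ is in the spirit of what the paper gestures at. But the ``principal technical obstacle'' you name---establishing (a) for arbitrary $\beta$ and Markov invariance in the $y$-ified setting---is precisely the open part of the conjecture. The sentence ``one should mimic the Oblomkov--Rozansky machinery \ldots\ but work throughout on $\Hilb^n(\C^2)$'' is a hope, not an argument, and nothing in your outline explains why the $y$-deformation of the Rouquier complex should globalize to a well-defined sheaf rather than merely a formal deformation along $\{y=0\}$. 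So there is no error to point to beyond the structural one: the statement is open, the paper does not prove it, and neither does your proposal.
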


For example, for the identity braid we get the Procesi bundle $\widetilde{\CF_1}=\CP$. Similarly, $\beta=T(n,kn)$ correspond to the vector bundle
$$
\widetilde{\CF_{n,kn}}=\CP\otimes \CO(k).
$$
As above, by Theorem \ref{th: isospectral} the space of sections of this bundle agree with $\HY^0(T(n,kn))$ computed in Theorem \ref{th: full twist}.

Finally, we can comment on the $q-t$ symmetry and $\mathfrak{sl}_2$ action in link homology which we saw in Theorems \ref{th: tautological}, \ref{th: sl2 braid variety} and \ref{th: perverse}. The group $SL(2)\subset GL(2)$ naturally acts on $\C^2$ by linear changes of coordinates, and this action extends to the action of $SL(2)$ on $\Hilb^n(\C^2)$.

One then expects that the sheaf $\widetilde{\CF_{\beta}}$ is equivariant with respect to this action. Since $\wedge^{\bullet}\CT^{\vee}$ is also $SL(2)$-equivariant, we get a natural action of the group $SL(2)$ and its Lie algebra $\mathfrak{sl}_2$ on the corresponding link cohomology. Since the action extends to the action of $GL(2)$, it is easy to see that the generators of $\mathfrak{sl}_2$ interact with $q$ and $t$ gradings (corresponding to $\C^*\times \C^*\subset GL(2)$) correctly. The symmetry between $q$ and $t$ is then realized by the action of the Weyl group $S_2\subset SL(2)$. 

Note that $\Hilb^n(\C^2,\C)$ is not preserved by the action of $SL(2)$, so one cannot expect the action of $SL(2)$ in $\HHH(\beta)$ for arbitrary links. On the other hand, $\Hilb^n(\C^2,0)$ is preserved by this action, so one indeed has an action of $SL(2)$ in the reduced homology $\overline{\HHH}(\beta)$ if $\beta$ closes up to a knot.

\subsection{Approaches to the proof}

In this subsection we outline some of the approaches to the proof of Conjecture \ref{conj: GNR}. The first, and the only complete at the moment is realized by Oblomkov and Rozansky \cite{OR1,OR2,OR3,OR4,OR5,OR6,OR7,OR8}. In short, they define a new link homology theory using matrix factorizations on a complicated algebraic variety related to the flag Hilbert scheme $\FHilb^n(\C^2)$ (or rather a certain smooth ambient space where  $\FHilb^n(\C^2)$ is cut out by certain equations). In \cite{OR1} Oblomkov and Rozansky prove that their construction satisfies braid relations and is invariant under Markov moves, so it does indeed define a link invariant. In further papers, they formalize the relation between their invariant, $\FHilb^n(\C^2)$ and $\Hilb^n(\C^2)$, and associate a sheaf on  $\Hilb^n(\C^2)$ to any braid. Finally, in \cite{OR8} they construct a functor from their category of matrix factorizations to the category of Soergel bimodules, and prove that their invariant agrees with Khovanov-Rozansky homology defined using the latter. 
We refer the reader to \cite{Olectures} for an introduction to Oblomkov-Rozansky theory.

Another approach, due to the first author, Hogancamp and Wedrich relates the derived category of the Hilbert scheme to the ``annular'' category of links in the solid torus. The proper definition of the latter \cite{GHW,GW} uses the machinery of derived categorical traces and is out of scope of these notes. Still, we would like to note that by \cite{BKR,Haiman,Haiman2} the derived category of $\Hilb^n(\C^2)$ is generated by the Procesi bundle $\CP$ and its direct summands, so it is essentially determined by the endomorphism algebra 
$$
\End(\CP)=\C[x_1,\ldots,x_n,y_1,\ldots,y_n]\rtimes S_n.
$$
On the other hand, the annular category is generated by the object $\Tr(1)$ which is the closure of the identity braid. It is proved in \cite{GHW} that
$$
\End(\Tr(1))=\C[x_1,\ldots,x_n,\theta_1,\ldots,\theta_n]\rtimes S_n,
$$
where $\theta_i$ are odd variables while $x_i$ are, as above even.
Experts in geometric representation theory might recognise the connection with character sheaves following \cite{Rider,RR}. The similarity of these results suggest a direct relation between the two categories, but the detailed proofs are yet to be completed.

Finally, let us sketch the third approach, as outlined in \cite{GNR}.
Consider the graded algebra
$$
\CCG:=\bigoplus_{k=0}^{\infty}\Hom(R,\FT^k),
$$
where $\FT=(\sigma_1\cdots \sigma_{n-1})^n$ is the full twist as above. The multiplication is given by the natural product (coming from the invertibility of $\FT$)
$$
\Hom(R,\FT^k)\otimes \Hom(R,\FT^{\ell})\to \Hom(R,\FT^{k+\ell})
$$
Recall that by Theorem \ref{th: full twist} we have 
$$
\Hom(R,\FT^k)=J^k/(y)J^k,\ J=\cap_{i\neq j}(x_i-x_j,y_i-y_j).
$$
so in fact by Theorem \ref{th: isospectral}(b) the graded algebra $\CCG$ coincides with the graded coordinate algebra of $X_n(\C^2,\C)$. Given an arbitrary braid $\beta$, we can consider a graded module 
$$
\bigoplus_{k=0}^{\infty}\Hom(R,\beta\cdot \FT^k),
$$
over the graded algebra $\CCG$, and hence a sheaf on $\Proj\  \CCG=X_n(\C^2,\C)$. By pushing forward along $q$, we obtain a sheaf on $\Hilb^n(\C^2)$. In \cite{GNR}, this construction is expected to lift to a dg functor 
$$
\CK^b(\SBim_n)\to D^b(\Hilb^n)
$$
satisfying Conjecture \ref{conj: GNR}. As with the previous approach, there are significant homological algebra subtleties that have to be overcome to complete the proof.

\begin{example}
\label{ex: graded algebra 2 strands}
Let us compute the graded algebra $\CCG$ for $n=2$. Recall that $T(2,2k)$ correspond to the following complexes of Soergel bimodules:
$$
\begin{array}{l}
T(2,0)=R\\
T(2,2)=[B\to B\to R]\\
T(2,4)=[B\to B\to B\to B\to R]\\
\ldots\\
\end{array}
$$
and
$$
\begin{array}{l}
\Hom(R,T(2,0))=R\\
\Hom(R,T(2,2))=[R\xrightarrow{0} R\xrightarrow{x_1-x_2} R]\\
\Hom(R,T(2,4))=[R\xrightarrow{0} R\xrightarrow{x_1-x_2} R\xrightarrow{0} R\xrightarrow{x_1-x_2}R]\\
\ldots\\
\end{array}
$$
In particular, $\Hom(R,T(2,2))$ has two generators $z$ and $w$ and one relation $w(x_1-x_2)=0$. One can check that $\Hom(R,T(2,4))$ has generators $z^2,zw,w^2$ and no more new relations (that is, $w(x_1-x_2)=0$ implies $zw(x_1-x_2)=w^2(x_1-x_2)=0$). Similarly,
$\Hom(R,T(2,2k))$ has generators $z^k,z^{k-1}w,\ldots,w^k$ and all relations follow from $w(x_1-x_2)=0$. By taking the direct sum over $k$, we get 
$$
\mathcal{G}=\frac{R[z,w]}{w(x_1-x_2)=0}=\frac{\C[x_1,x_2,z,w]}{w(x_1-x_2)=0}.
$$
Here $x_1,x_2$ are in degree zero and $z,w$ are in degree 1.
\end{example}

\begin{example}
Let us do the same computation in the $y$-ified category. This is very similar to Example \ref{ex: graded algebra 2 strands} but now the relation reads $w(x_1-x_2)=z(y_1-y_2)$, see Example \ref{ex: yified homology two strands} for details.
We get the graded algebra
$$
\mathcal{G}_{y}=\frac{R[y][z,w]}{w(x_1-x_2)=z(y_1-y_2)}=\frac{\C[x_1,x_2,y_1,y_2,z,w]}{w(x_1-x_2)=z(y_1-y_2)}
$$
where $x_1,x_2,y_1,y_2$ are in degree zero and $z,w$ are in degree 1. Note that $\mathrm{Proj}\ \mathcal{G}_y$ is the blowup of $(\C^2)^2$ along the diagonal $\{x_1=x_2,y_1=y_2\}$ which is isomorphic to the isospectral Hilbert scheme $X_2$.
\end{example}


\begin{thebibliography}{99}
\bibitem{Alex} J. Alexander, A lemma on a system of knotted curves, Proc. Nat. Acad. Sci. USA.
9 (1923), 93--95.

\bibitem{Artal}  E. Artal Bartolo, J. Carmona Ruber, J. Cogolludo Agust\'in. Braid monodromy and topology of plane curves. Duke Math. J. 118 (2003), no. 2, 261--278.

\bibitem{AS} M. Aganagic, S. Shakirov. Refined Chern--Simons theory and knot homology. String-Math 2011, 3--31, Proc. Sympos. Pure Math., 85, Amer. Math. Soc., Providence, RI, 2012

\bibitem{BGSX} F. Bergeron, A. Garsia, E. Sergel Leven, G. Xin. Compositional $(km,kn)$-shuffle conjectures. Int. Math. Res. Not. IMRN 2016, no. 14, 4229–4270.

\bibitem{BBSV} R. Bezrukavnikov, P. Boixeda Alvarez, P. Shan, E. Vasserot. Cohomology of affine Springer fibers and the center of small quantum groups. To appear.

\bibitem{BT} R. Bezrukavnikov, K. Tolmachov. Monodromic model for Khovanov-Rozansky homology. arXiv:2008.11379 

\bibitem{Birman} J. Birman. Braids, links and mapping class groups. Annals of
Math. Studies 82 (1974).

\bibitem{BHMPS} J. Blasiak, M. Haiman, J. Morse, A. Pun, G. Seelinger. A Shuffle Theorem for Paths Under Any Line. arXiv:2102.07931 

\bibitem{Boalch} P. Boalch. Geometry and braiding of Stokes data; fission and wild character varieties. Ann. of Math. (2) 179 (2014), no. 1, 301--365.

\bibitem{Pablo} P. Boixeda Alvarez, I. Losev. Affine Springer Fibers, Procesi bundles, and Cherednik algebras. arXiv:2104.09543



\bibitem{BSS} R. Bott, H. Shulman, and J. Stasheff, On the de Rham theory of certain classifying spaces,
Advances in Math. 20 (1976), no. 1, 43--56.

\bibitem{BPW}  T. Braden, N. Proudfoot, B. Webster. Quantizations of conical symplectic resolutions I: local and global structure. Ast\'erisque No. 384 (2016), 1--73.

\bibitem{BLPW} T. Braden, A. Licata, N. Proudfoot, B. Webster. Quantizations of conical symplectic resolutions II: category $\mathcal{O}$ and symplectic duality. with an appendix by I. Losev. Ast\'erisque No. 384 (2016), 75--179.

\bibitem{BFN}  A. Braverman, M. Finkelberg, H. Nakajima. Towards a mathematical definition of Coulomb branches of 3-dimensional $N=4$ gauge theories, II. Adv. Theor. Math. Phys. 22 (2018), no. 5, 1071--1147.

\bibitem{BFN2} A. Braverman, M. Finkelberg, H. Nakajima. Coulomb branches of 3d $N=4$ quiver gauge theories and slices in the affine Grassmannian. With two appendices by Braverman, Finkelberg, Joel Kamnitzer, Ryosuke Kodera, Nakajima, Ben Webster and Alex Weekes. Adv. Theor. Math. Phys. 23 (2019), no. 1, 75--166.

\bibitem{BKR} T. Bridgeland, A. King, M. Reid.
The McKay correspondence as an equivalence of derived categories.
J. Amer. Math. Soc. 14 (2001), no. 3, 535--554.

\bibitem{Brion} M. Brion. Lectures on the geometry of flag varieties. In Topics in cohomological studies of algebraic varieties, pages 33--85. Springer, 2005.

\bibitem{BM}  M. Brou\'e, J. Michel. Sur certains \'el\'ements r\'eguliers des groupes de Weyl et les vari\'eti\'es
de Deligne-Lusztig associ\'ees. Finite Reductive Groups: Related Structures and Representations. Ed. M. Cabanes. Birkh\"auser Boston (1997), 73--139


\bibitem{CGGS} R. Casals, E. Gorsky, M. Gorsky, J. Simental. Algebraic Weaves and Braid Varieties. arXiv:2012.06931

\bibitem{CGGS2} R. Casals, E. Gorsky, M. Gorsky, J. Simental. Positroid Links and Braid varieties. arXiv:2105.13948

\bibitem{CN} R. Casals, L. Ng. Braid loops with infinite monodromy on the Legendrian contact dga. arXiv:2101.02318 

\bibitem{dCHM} M. de Cataldo, T. Hausel, L. Migliorini. Topology of Hitchin systems and Hodge theory of character varieties: the case $A_1$. Ann. of Math. (2) 175 (2012), no. 3, 1329--1407.

\bibitem{Che} Y. Chekanov. Differential algebra of Legendrian links. Inventiones mathematicae, 150(3):441--483, 2002.

\bibitem{Cherednik}  I. Cherednik. Jones polynomials of torus knots via DAHA. Int. Math. Res. Not. IMRN 2013, no. 23, 5366--5425.

\bibitem{Deligne}  P. Deligne. Action du groupe des tresses sur une cat\'egorie. Invent. math., 128 (1997), 159--175.

\bibitem{DL} P. Deligne, G. Lusztig. Representations of Reductive Groups over Finite Fields. Ann. of Math. (2), 103(1) (1976), 103--161.

\bibitem{DGR} N. Dunfield, S. Gukov, J. Rasmussen, The superpolynomial for knot
homologies, Experiment. Math. 15 (2006), no. 2, 129--159.

\bibitem{EN} D. Eisenbud, W. Neumann. Three-dimensional link theory and invariants of plane curve singularities. Annals of Mathematics Studies, 110. Princeton University Press, Princeton, NJ, 1985.

\bibitem{EH}  B. Elias, M. Hogancamp. On the computation of torus link homology. Compos. Math. 155 (2019), no. 1, 164--205. arXiv:1603.00407

\bibitem{Soergelbook}  B. Elias, S. Makisumi, U. Thiel, G. Williamson. Introduction to Soergel bimodules. RSME Springer Series, 5. Springer, 2020

\bibitem{escobar} L. Escobar. Brick manifolds and toric varieties of brick polytopes. The Electronic Journal of Combinatorics. (2016), 2--25. 

\bibitem{GL} P. Galashin, T. Lam. Positroids, Knots, and $q, t$--Catalan Numbers. arXiv:2012.09745

\bibitem{GL2} P. Galashin, T. Lam. Positroid Catalan numbers. arXiv:2104.05701 

\bibitem{GL3} P. Galashin, T. Lam. Positroid varieties and cluster algebras. arXiv:1906.03501

\bibitem{GaoSW} H. Gao, L. Shen, D. Weng. Augmentations, Fillings, and Clusters. arXiv:2008.10793 

\bibitem{GarKiv} N. Garner, O. Kivinen. Generalized affine Springer theory and Hilbert schemes on planar curves. arXiv:2004.15024 

\bibitem{GaHa}  A. Garsia, M. Haiman. A remarkable $q,t$--Catalan sequence and $q$-Lagrange inversion. J. Algebraic Combin. 5 (1996), no. 3, 191--244.

\bibitem{GeShV}  M. Gekhtman, M. Shapiro, and A. Vainshtein, Cluster algebras and Poisson geometry. Mathematical
Surveys and Monographs, 167. American Mathematical Society, Providence, RI, 2010. 

\bibitem{GKM} M. Goresky, R. Kottwitz, R. Macpherson. Homology of affine Springer fibers in the unramified case. Duke Math. J. 121 (2004), no. 3, 509--561.

\bibitem{GKMPurity} M. Goresky, R. Kottwitz, R. MacPherson. Purity of equivalued affine Springer fibers. Represent. Theory 10 (2006), 130--146. 

\bibitem{G} E. Gorsky. $q,t$-Catalan numbers and knot homology. Zeta functions in algebra and geometry, 213--232, Contemp. Math., 566, Amer. Math. Soc., Providence, RI, 2012.

\bibitem{GHSR}  E. Gorsky, G. Hawkes, A. Schilling, J. Rainbolt. Generalized $q,t$-Catalan numbers. Algebr. Comb. 3 (2020), no. 4, 855--886.

\bibitem{GH} E. Gorsky, M. Hogancamp. Hilbert schemes and $y$-ification of Khovanov-Rozansky homology. To appear in Geometry \& Topology,  arXiv:1712.03938

\bibitem{GHM} E. Gorsky, M. Hogancamp, A. Mellit. Tautological classes and symmetry in Khovanov-Rozansky homology. arXiv:2103.01212

\bibitem{GHMN}  E. Gorsky, M. Hogancamp, A. Mellit, K. Nakagane. Serre Duality for Khovanov---Rozansky Homology. Selecta Math. (N. S.), 25(79) (2019), 33 pp.

\bibitem{GHW} E. Gorsky, M. Hogancamp, P. Wedrich. Derived traces of Soergel categories. arXiv:2002.06110. To appear in IMRN.

\bibitem{GM1} E. Gorsky, M. Mazin. Compactified Jacobians and $q,t$-Catalan numbers, I. J. Combin. Theory Ser. A 120 (2013), no. 1, 49--63. 

\bibitem{GM2} E. Gorsky, M. Mazin. Compactified Jacobians and $q,t$-Catalan numbers, II. J. Algebraic Combin. 39 (2014), no. 1, 153--186.

\bibitem{GMV} E. Gorsky, M. Mazin, M. Vazirani. Affine permutations and rational slope parking functions. Trans. Amer. Math. Soc. 368 (2016), no. 12, 8403--8445.

\bibitem{GMV2} E. Gorsky, M. Mazin, M. Vazirani. Recursions for rational $q,t$-Catalan numbers. J. Combin. Theory Ser. A 173 (2020), 105237, 28 pp. 

\bibitem{GN} E. Gorsky, A. Negu\cb{t}. Refined knot invariants and Hilbert schemes. J. Math. Pures Appl. (9) 104 (2015), no. 3, 403--435.

\bibitem{GNR} E. Gorsky, A. Negu\cb{t}, J.  Rasmussen. Flag Hilbert schemes, colored projectors and Khovanov-Rozansky homology. Adv. Math. 378 (2021), Paper No. 107542, 115 pp.

\bibitem{GORS}  E. Gorsky, A. Oblomkov, J. Rasmussen, V. Shende. Torus knots and the rational DAHA. Duke Math. J. 163 (2014), no. 14, 2709--2794. 

\bibitem{GSV} E. Gorsky, J. Simental, M. Vazirani. Parabolic Hilbert schemes via the Dunkl-Opdam subalgebra. arXiv:2004.14873 

\bibitem{GW} E. Gorsky, P. Wedrich. Evaluations of annular Khovanov--Rozansky homology. arXiv:1904.04481 

\bibitem{Haglund}  J. Haglund. The $q,t$--Catalan numbers and the space of diagonal harmonics. With an appendix on the combinatorics of Macdonald polynomials. University Lecture Series, 41. American Mathematical Society, Providence, RI, 2008.

\bibitem{HaimanCatalan} M. Haiman. $t,q$--Catalan numbers and the Hilbert scheme. Selected papers in honor of Adriano Garsia (Taormina, 1994). Discrete Math. 193 (1998), no. 1--3, 201--224. 

\bibitem{Haiman} M. Haiman. Hilbert schemes, polygraphs and the Macdonald positivity conjecture. J. Amer. Math. Soc. 14 (2001), no. 4, 941--1006.

\bibitem{Haiman2} M. Haiman. Vanishing theorems and character formulas for the Hilbert scheme of points in the plane. Invent. Math. 149 (2002), no. 2, 371--407.

\bibitem{HMW} T. Hausel, M.  Mereb, M. Wong. Arithmetic and representation theory of wild character varieties. J. Eur. Math. Soc. (JEMS) 21 (2019), no. 10, 2995--3052.

\bibitem{Hikita} T. Hikita. Affine Springer fibers of type A and combinatorics of diagonal coinvariants. Adv. Math. 263 (2014), 88--122.

\bibitem{HKW} J. Hilburn, J. Kamnitzer, A. Weekes. BFN Springer Theory. arXiv:2004.14998

\bibitem{Hog} M. Hogancamp. Categorified Young symmetrizers and stable homology of torus links. Geom. Topol. 22 (2018), no. 5, 2943--3002.

\bibitem{Hog2} M. Hogancamp.Khovanov-Rozansky homology and higher Catalan sequences. arXiv:1704.01562 

\bibitem{HM} M. Hogancamp, A. Mellit. Torus link homology. arXiv:1909.00418

\bibitem{Jeffrey} L. Jeffrey, Group cohomology construction of the cohomology of moduli spaces of flat
connections on 2-manifolds, Duke Math. J. 77 (1995), no. 2, 407--429.

\bibitem{Kalman} T. K\'alm\'an. Braid-positive Legendrian links. Int. Math. Res. Not., pages Art ID 14874, 29, 2006.

\bibitem{Khovanov}  M. Khovanov. A categorification of the Jones polynomial. Duke Math. J. 101 (2000), no. 3, 359--426.

\bibitem{KhSoergel} M. Khovanov. Triply-graded link homology and Hochschild homology of Soergel bimodules. Internat. J. Math. 18 (2007), no. 8, 869--885. 

\bibitem{KR1} M. Khovanov, L. Rozansky. Matrix factorizations and link homology. Fund. Math. 199 (2008), no. 1, 1--91.

\bibitem{KR2} M. Khovanov, L. Rozansky. Matrix factorizations and link homology. II. Geom. Topol. 12 (2008), no. 3, 1387--1425.

\bibitem{Kivinen} O. Kivinen. Hecke correspondences for Hilbert schemes of reducible locally planar curves. Algebr. Geom. 6 (2019), no. 5, 530--547.

\bibitem{Kivinen2} O. Kivinen. Unramified affine Springer fibers and isospectral Hilbert schemes. Selecta Math. (N.S.) 26 (2020), no. 4, Paper No. 61, 42 pp.

\bibitem{KLS} A. Knutson, T. Lam, D. Speyer. Positroid varieties: juggling and geometry. Compos. Math. 149 (2013), no. 10, 1710--1752. 

\bibitem{KM}  A. Knutson, E. Miller. Subword complexes in Coxeter groups. Adv. Math. 184 (2004), no. 1, 161--176.

\bibitem{KN} R. Kodera, H. Nakajima.
Quantized Coulomb branches of Jordan quiver gauge theories and cyclotomic rational Cherednik algebras. String-Math 2016, 49--78,
Proc. Sympos. Pure Math., 98, Amer. Math. Soc., Providence, RI, 2018.

\bibitem{LS} T. Lam, D. Speyer. Cohomology of cluster varieties. I. Locally acyclic case. arXiv:1604.06843 

\bibitem{LuSm} G. Lusztig, J. Smelt.
Fixed point varieties on the space of lattices.
Bull. London Math. Soc. 23 (1991), no. 3, 213--218.

\bibitem{Markov} A.A. Markov, \"Uber die freie \"Aquivalenz der geschlossenen Z\"opfe. Recueil Math. Moscou , 1 (1935) pp. 73--78.

\bibitem{Maulik} D. Maulik. Stable pairs and the HOMFLY polynomial. Invent. Math. 204 (2016), no. 3, 787--831.

\bibitem{MY} D. Maulik, Z. Yun. Macdonald formula for curves with planar singularities. J. Reine Angew. Math. 694 (2014), 27--48.

\bibitem{Mellit3} A. Mellit. Toric braids and $(m,n)$--parking functions. arXiv:1604.07456

\bibitem{Mellit} A. Mellit. Homology of torus knots. arXiv:1704.07630 

\bibitem{Mellit2} A. Mellit. Cell decompositions of character varieties. arXiv:1905.10685



\bibitem{MBourbaki} L. Migliorini. Homfly polynomials from the Hilbert schemes of a planar curve [after D. Maulik, A. Oblomkov, V. Shende,\ldots]. Ast\'erisque No. 422, S\'eminaire Bourbaki. Vol. 2018/2019. Expos\'es 1151--1165 (2020), 355--389.

\bibitem{MS} L. Migliorini, V. Shende. A support theorem for Hilbert schemes of planar curves. J. Eur. Math. Soc. (JEMS) 15 (2013), no. 6, 2353--2367. 

\bibitem{Milnor}  J. Milnor. Singular points of complex hypersurfaces. Annals of Mathematics Studies, No. 61 Princeton University Press, Princeton, N.J.; University of Tokyo Press, Tokyo 1968.

\bibitem{NakBook} H. Nakajima. Lectures on Hilbert schemes of points on surfaces. University Lecture Series, 18. American Mathematical Society, Providence, RI, 1999. 

\bibitem{Nawata} S. Nawata,  A. Oblomkov.
Lectures on knot homology. Physics and mathematics of link homology, 137--177,
Contemp. Math., 680, Centre Rech. Math. Proc., Amer. Math. Soc., Providence, RI, 2016.

\bibitem{Olectures} A. Oblomkov. Notes on matrix factorizations and knot homology. arXiv:1901.04052 

\bibitem{ORS}  A. Oblomkov, J. Rasmussen, V. Shende. The Hilbert scheme of a plane curve singularity and the HOMFLY homology of its link. With an appendix by Eugene Gorsky. Geom. Topol. 22 (2018), no. 2, 645--691. 

\bibitem{OR1} A. Oblomkov, L. Rozansky.  Knot homology and sheaves on the Hilbert scheme of points on the plane. Selecta Math. (N.S.) 24 (2018), no. 3, 2351--2454.

\bibitem{OR2} A. Oblomkov, L. Rozansky. Affine braid group, JM elements and knot homology. Transform. Groups 24 (2019), no. 2, 531--544.

\bibitem{OR3} A. Oblomkov, L. Rozansky. HOMFLYPT homology of Coxeter links. arXiv:1706.00124 

\bibitem{OR4} A. Oblomkov, L. Rozansky. A categorification of a cyclotomic Hecke algebra. arXiv:1801.06201 

\bibitem{OR5} A. Oblomkov, L. Rozansky. Categorical Chern character and braid groups. arXiv:1811.03257

\bibitem{OR6} A. Oblomkov, L. Rozansky.3D TQFT and HOMFLYPT homology. arXiv:1812.06340 

\bibitem{OR7} A. Oblomkov, L. Rozansky. Dualizable link homology.
arXiv:1905.06511

\bibitem{OR8} A. Oblomkov, L. Rozansky. Soergel bimodules and matrix factorizations. arXiv:2010.14546

\bibitem{OY} A. Oblomkov, Z. Yun.
Geometric representations of graded and rational Cherednik algebras.
Adv. Math. 292 (2016), 601--706.

\bibitem{OY2} A. Oblomkov, Z. Yun. The cohomology ring of certain compactified Jacobians. arXiv:1710.05391 

\bibitem{UCDSeminar} Online seminar on homology of torus (and other) knots. \url{https://www.math.ucdavis.edu/~egorskiy/TorusKnots/}

\bibitem{Piontkowski} J. Piontkowski. Topology of the compactified Jacobians of singular curves. Math. Z. 255 (2007), no. 1, 195--226.

\bibitem{QR}  H. Queffelec, D. E. V. Rose. The $\mathfrak{sl}_n$ foam 2-category: a combinatorial formulation of Khovanov-Rozansky homology via categorical skew Howe duality. Adv. Math. 302 (2016), 1251--1339.

\bibitem{Rasmussen} J. Rasmussen.  Some differentials on Khovanov-Rozansky homology. Geom. Topol. 19 (2015), no. 6, 3031--3104. 

\bibitem{RasmussenNotes} J. Rasmussen. Knots, Polynomials, and Categorification. PCMI lecture notes, \url{https://www.dpmms.cam.ac.uk/~jar60/PCMINotes.pdf}


\bibitem{Rennemo}  J. Rennemo. Homology of Hilbert schemes of points on a locally planar curve. J. Eur. Math. Soc. (JEMS) 20 (2018), no. 7, 1629--1654.

\bibitem{Rider}  L. Rider. Formality for the nilpotent cone and a derived Springer correspondence. Adv. Math. 235 (2013), 208--236.

\bibitem{RR} L. Rider, A. Russell. Perverse sheaves on the nilpotent cone and Lusztig's generalized Springer correspondence. Lie algebras, Lie superalgebras, vertex algebras and related topics, 273--292, Proc. Sympos. Pure Math., 92, Amer. Math. Soc., Providence, RI, 2016.

\bibitem{RW} L.-H. Robert, E. Wagner. A closed formula for the evaluation of foams. Quantum Topol. 11 (2020), no. 3, 411--487. 

\bibitem{Rouquier} R. Rouquier. Categorification of the braid groups. arXiv:math/0409593

\bibitem{SSBW} K. Serhiyenko, M. Sherman-Bennett, L. Williams.
Cluster structures on Schubert varieties in the Grassmannian.
Proc. Lond. Math. Soc. (6) 119 (2019) 1694--1744

\bibitem{STZ} V. Shende, D. Treumann, E. Zaslow. Legendrian knots and constructible sheaves. Invent. Math. 207 (2017), no. 3, 1031--1133.

\bibitem{Soe} W. Soergel. “Kategorie $\mathcal{O}$, perverse Garben und Moduln \"uber den Koinvarianten
zur Weylgruppe.” In: J. Amer. Math. Soc. 3.2 (1990), pp. 421--445.


\bibitem{Trinh} M.-T. Trinh. From the Hecke Category to the Unipotent Locus.  arXiv:2106.07444



\bibitem{WW} B. Webster, G. Williamson. A Geometric Model for Hochschild Homology of Soergel
Bimodules. Geom. Top., 12 (2008), 1243--1263.

\bibitem{WW2} B. Webster, G. Williamson. The Geometry of Markov Traces. Duke Math. J., 160(2)
(2011), 401--419.

\bibitem{Wu} H. Wu. Braids, transversal links and the Khovanov-Rozansky theory. Trans. Amer. Math. Soc. 360 (2008), no. 7, 3365--3389.



\end{thebibliography}
\end{document}